\numberwithin{equation}{section}
\newtheorem{thm}{Theorem}[section]
\newtheorem{defn}[thm]{Definition}
\newtheorem{lem}[thm]{Lemma}
\newtheorem{prop}[thm]{Proposition}
\newtheorem{cor}[thm]{Corollary}
\theoremstyle{remark}
\newtheorem{rmk}{Remark}
\newcommand{\Lm}{\Lambda}
\newcommand{\Fl}{\mathcal{F}\ell}
\newcommand{\D}{\mathbb{D}}
\DeclareMathOperator{\Gl}{GL}
\DeclareMathOperator{\Sl}{SL}
\DeclareMathOperator{\Sp}{Sp}
\DeclareMathOperator{\GSp}{GSp}
\DeclareMathOperator{\PSL}{PSL}
\DeclareMathOperator{\PSp}{PSp}
\newcommand{\lm}{\lambda}
\DeclareMathOperator{\Bd}{Bound}
\DeclareMathOperator{\Gr}{Gr}
\newcommand{\G}{\mathbb{G}}
\newcommand{\Prj}{\text{Project}}
\newcommand{\mf}{\mathbf}
\newcommand{\reals}{\mathbb{R}}
\newcommand{\naturals}{\mathbb{N}}
\newcommand{\complexes}{\mathbb{C}}
\newcommand{\integers}{\mathbb{Z}}
\newcommand{\Q}{\mathcal{Q}}
\newcommand{\Le}{\raisebox{\depth}{\rotatebox{180}{$\Gamma$}}}
\newcommand{\bdot}[2]{
\draw [black, fill = black] (#1, #2) circle [radius = 0.1];
}
\newcommand{\wdot}[2]{
\draw [black, fill = white] (#1, #2) circle [radius = 0.1];
}
\newcommand{\edge}[3]{
\draw (#1,#2) -- (#1, #2 + #3);
\bdot{#1}{#2}
\wdot{#1}{#2 + #3}
}
\author{Rachel Karpman}
\address{
Department of Mathematics,
University of Michigan, Ann Arbor,
530 Church St.,
Ann Arbor, MI 48109-1043
USA}
\email{rkarpman@umich.edu}
\title{Total Positivity for the Lagrangian Grassmannian}
\begin{document}

\begin{abstract}
The stratification of the Grassmannian by positroid varieties has been the subject of extensive research.  Positroid varieties are in bijection with a number of combinatorial objects, including $k$-Bruhat intervals and bounded affine permutations.  In addition, Postnikov's \emph{boundary measurement map} gives a family of parametrizations of each positroid variety; the domain of each parametrization is the space of edge weights of a weighted planar network.
In this paper, we generalize the combinatorics of positroid varieties to the Lagrangian Grassmannian $\Lm(2n)$, which is the type $C$ analog of the ordinary, or type $A$, Grassmannian.  The Lagrangian Grassmannian has a stratification by projected Richardson varieties, which are the type $C$ analogs of positroid varieties.  We define type $C$ generalizations of bounded affine permutations and $k$-Bruhat intervals, as well as several other combinatorial posets which index positroid varieties.  In addition, we generalize Postnikov's network parametrizations to projected Richardson varieties in $\Lm(2n)$.  In particular, we show that restricting the edge weights of our networks to $\mathbb{R}^+$ yields a family of parametrizations for totally nonnegative cells in $\Lm(2n)$.  In the process, we obtain a set of linear relations among the Pl\"{u}cker coordinates on $\Gr(n,2n)$ which cut out the Lagrangian Grassmannian set-theoretically.
\end{abstract}

\thanks{This research was supported in part by NSF grant DGE-1256260 and NSF grant DMS-0943832.}

\maketitle{}
\tableofcontents{}

\section{Introduction}

Lusztig defined the \emph{totally nonnegative part} of an abstract flag manifold $G/P$ and conjectured that it was made up of topological cells, a conjecture proved by Rietsch in the late 1990's \citep{Lus94, Lus98, Rie99}.  Postnikov later introduced the \emph{positroid stratification} of the totally nonnegative Grassmannian $\Gr_{\geq 0}(k,n)$, and showed that his stratification was a special case of Lusztig's \citep{Pos06}.  While Lusztig's approach relied on the machinery of canonical bases, Postnikov's was more elementary.  Each \emph{positroid cell} in Postnikov's stratification was defined as the locus in $\Gr_{\geq 0}(k,n)$ where certain Pl\"{u}cker coordinates vanish.  

The positroid stratification of $\Gr_{\geq 0}(k,n)$ extends to a stratification of the complex Grassmannian $\Gr(k,n)$ of $k$-planes in $n$-space.   That is, we can decompose $\Gr(k,n)$ into \emph{positroid varieties} $\Pi$ which are the Zariski closures of Postnikov's totally nonnegative cells.  Remarkably, these positroid varieties are the images of \emph{Richardson varieties} in $\mathcal{F}\ell(n)$ under the natural projection 
\begin{displaymath} \pi_k:\mathcal{F}\ell(n) \rightarrow \Gr(k,n).\end{displaymath}
For each positroid variety $\Pi$, there is a family of Richardson varieties which project birationally to $\Pi$, and this family has an elementary combinatorial description \citep{KLS13}.

The stratification of $\Gr(k,n)$ by projected Richardson varieties was first studied by Lusztig \citep{Lus98}.  Brown, Goodearl and Yakimov investigated the same stratification from the viewpoint of Poisson geometry \citep{BGY06}.  Finally, Knutson, Lam and Speyer showed that Lusztig's strata were in fact the Zariski closures of Postnikov's totally nonnegative cells \citep{KLS13}.  

There are a number of combinatorial objects which index positroid varieties, including \emph{bounded affine permutations} and \emph{$k$-Bruhat intervals}.  In addition, there is a remarkable combinatorial construction, due to Postnikov, which yields a family of parametrizations of each positroid variety.

Postnikov's original construction gives a family of maps onto each positroid cell in $\Gr_{\geq 0}(k,n)$.  The domain of each map is the space of positive real edge weights of a weighted planar network, called a \emph{plabic graph}, and there is a class of such networks for each positroid cell \citep{Pos06}.  Let $G$ be a planar network corresponding to a positroid cell 
\begin{math}\mathring{\Pi}_{\geq 0}\end{math} 
of dimension $d$ in $\Gr_{\geq 0}(k,n)$.  Specializing all but an appropriately chosen set of $d$ edge weights to $1$ yields a homeomorphism 
\begin{math}(\reals^+)^d \rightarrow \mathring{\Pi}_{\geq 0}\end{math} 
which we call a \emph{parametrization} of 
\begin{math}\mathring{\Pi}_{\geq 0}.\end{math} If we let the edge weights range over $\complexes^{\times}$ instead of $\reals^+,$ we obtain a well-defined homeomorphism onto a dense subset of the positroid variety $\Pi$ in $\Gr(k,n)$ corresponding to the totally nonnegative cell 
\begin{math}\mathring{\Pi}_{\geq 0}\end{math}
\citep{MS14}.  We call these maps \emph{parametrizations} as well.

While the combinatorial theory of the positroid stratification of $\Gr(k,n)$  is particularly nice, the projected Richardson stratification for general $G/P$ is also of great interest.  From a combinatorial standpoint, the poset of projected Richardson varieties is a shellable ball \citep{Wil07, KLS13}.  In addition, projected Richardson varieties have nice geometric properties: they are normal, Cohen-Macaulay, and have rational singularities \citep{KLS13}.  

In this paper, we extend the combinatorial theory of positroid varieties to the \emph{Lagrangian Grassmannian} $\Lm(2n)$. While the ordinary Grassmannian is the moduli space of $k$-dimensional subspaces of an $n$-dimensional complex vector space, points in the Lagrangian Grassmannian correspond to maximal isotropic subspaces of $\mathbb{C}^{2n}$ with respect to a symplectic form.  Hence we may realize $\Lm(2n)$ as a subvariety of $\Gr(n,2n)$.  Alternatively, $\Lm(2n)$ is the quotient of the symplectic group $\Sp(2n)$ by a parabolic subgroup, and is thus a partial flag variety.  By Lusztig's general theory, $\Lm(2n)$ has a stratification by projected Richardson varieties, which will be our principal objects of study.

The poset $\Q(k,n)$ of $k$-Bruhat intervals which index positroid varieties has a natural analog for any partial flag variety.  We explicitly describe the corresponding poset $\Q^C(2n)$ which indexes projected Richardson varieties in $\Lm(2n)$, and relate $\Q^C(2n)$ to $\mathcal{Q}(n,2n)$.  
We then define the natural analogs of bounded affine permutations for the Lagrangian Grassmannian.  These turn out to be bounded affine permutations which satisfy a symmetry condition.  

Next, we construct network parametrizations of projected Richardson varieties in the Lagrangian Grassmannian.  The underlying graphs are plabic graphs which satisfy a symmetry condition, and we impose a corresponding symmetry condition on the edge weights: if $e$ and $e'$ are a symmetric pair of edges, then $e$ and $e'$ must have the same weight.  See Figure \ref{introfig} for an example.  We investigate the combinatorics of these networks, which neatly parallels the combinatorics of ordinary plabic graphs.  In particular, we show that symmetric plabic graphs with positive real edge weights parametrize totally nonnegative cells in $\Lm(2n)$.  Finally, we use the combinatorics of symmetric plabic graphs to define analogs for the Lagrangian Grassmannian of various posets which index positroid cells. 

\begin{figure}[H]
\centering
\begin{tikzpicture}[scale = 0.8]
\draw (0,0) circle (3);
\draw ({2*cos(60)},{2*sin(60)}) -- ({-2*cos(60)},{2*sin(60)})  -- ({-2*cos(60)},{-2*sin(60)})  -- ({2*cos(60)},{-2*sin(60)}) --  ({2*cos(60)},{2*sin(60)});
\draw [gray, thin] (0,3.5) -- (0,-3.5);
\draw ({2*cos(60)},{2*sin(60)}) -- ({3*cos(60)},{3*sin(60)});
\draw ({-2*cos(60)},{2*sin(60)}) -- ({-3*cos(60)},{3*sin(60)});
\draw ({-2*cos(60)},{-2*sin(60)}) -- ({-3*cos(60)},{-3*sin(60)});
\draw ({2*cos(60)},{-2*sin(60)}) -- ({3*cos(60)},{-3*sin(60)});
\draw ({3*cos(30)},{3*sin(30)}) -- (2,0) -- ({3*cos(30)},{-3*sin(30)});
\draw ({-3*cos(30)},{3*sin(30)}) -- (-2,0) -- ({-3*cos(30)},{-3*sin(30)});
\bdot{{2*cos(60)}}{{2*sin(60)}};\wdot{{-2*cos(60)}}{{2*sin(60)}};\bdot{{-2*cos(60)}}{{-2*sin(60)}};\wdot{{2*cos(60)}}{{-2*sin(60)}};
\wdot{{3*cos(60)}}{{3*sin(60)}};\bdot{{3*cos(30)}}{{3*sin(30)}};\wdot{2}{0};\bdot{{3*cos(30)}}{{-3*sin(30)}};\bdot{{3*cos(60)}}{{-3*sin(60)}};
\bdot{{-3*cos(60)}}{{3*sin(60)}};\wdot{{-3*cos(30)}}{{3*sin(30)}};\bdot{-2}{0};\wdot{{-3*cos(30)}}{{-3*sin(30)}};\wdot{{-3*cos(60)}}{{-3*sin(60)}};
\node [above right] at ({3*cos(60)},{3*sin(60)}) {1};
\node [above right] at ({3*cos(30)},{3*sin(30)}) {2};
\node [below right] at ({3*cos(-30)},{3*sin(-30)}) {3};
\node [below right] at ({3*cos(-60)},{3*sin(-60)}) {4};
\node [below left] at ({3*cos(240)},{3*sin(240)}) {5};
\node [below left] at ({3*cos(210)},{3*sin(210)}) {6};
\node [above left] at ({3*cos(150)},{3*sin(150)}) {7};
\node [above left] at ({3*cos(120)},{3*sin(120)}) {8};
\node [right] at (0,2) {4};
\node [right] at (0,-2) {3};
\node [right] at (1,0) {7};
\node [left] at (-1,0) {7};
\end{tikzpicture}
\caption{A symmetric weighting of a symmetric plabic graph.  All unlabeled edges have weight $1$.}
\label{introfig}
\end{figure}
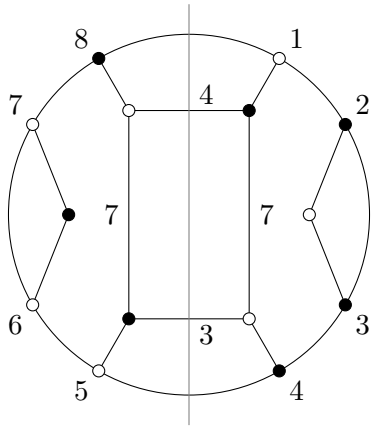

Much of the inspiration for this project came from \citep{LW07}, where the authors generalize Postnikov's \Le-diagrams to all \emph{cominiscule Grassmannians}.  \Le-diagrams are tableaux filled with $0$'s and $+$'s which satisfy a pattern-avoidance condition.  Each positroid variety in $\Gr(k,n)$ corresponds to a unique \Le-diagram, which in turn encodes a particular choice of planar network \citep{Pos06}.  Hence, generalizing \Le-diagrams is a step toward generalizing Postnikov's theory of planar networks.  Moreover, the \emph{type B decorated permutations} of Lam and Williams are in bijection with our bounded affine permutations for the Lagrangian Grassmannian.  Note that Lam and Williams' type B objects index projected Richardson varieties in both the odd orthogonal Grassmannian, a flag variety of type B, and the Lagrangian Grassmannian \citep{LW07}.  This follows from the fact that the Weyl groups of types B and C are isomorphic.  

The organization of the paper is as follows.  In Section \ref{back}, we provide background on flag varieties, Richardson varieties and projected Richardson varieties, including \emph{Deodhar decompositions} of flag varieties.  We also review the combinatorial theory of positroid varieties, most notably plabic graphs and the boundary measurement map.  In Section \ref{bound}, we realize $\mathcal{Q}^C(2n)$ as a subposet of $\mathcal{Q}(n,2n)$, and define bounded affine permutations for the Lagrangian Grassmannian.  We then use these results to relate the geometry of the projected Richardson stratification of $\Lm(2n)$ to the geometry of the positroid stratification.  

In Section \ref{Cbridge} we define the Lagrangian Grassmannian analogs of a special class of plabic graphs called \emph{bridge graphs}, and in Section \ref{Cbdry} we generalize this construction to give a theory of plabic graphs for the Lagrangian Grassmannian.   In the process, we obtain a set of linear relations among the Pl\"{u}cker coordinates on $\Gr(n,2n)$ which cut out $\Lm(2n)$ set-theoretically.  In Section \ref{positivity}, we relate our network parametrizations to total positivity in $\Lm(2n)$. Finally, in Section \ref{index}, we describe several more combinatorial indexing sets for projected Richardson varieties in $\Lm(2n)$.  Namely, we give type C analogs of \emph{Grassmann necklaces}, \emph{dual Grassmann necklaces}, and a class of matroids called \emph{positroids}.

\subsection*{Acknowledgements} I would like to thank Thomas Lam for his invaluable guidance throughout the course of this project. I am grateful to Greg Muller and David E Speyer, for sharing a then-unpublished manuscript of \citep{MS14}.  Thanks also to Yi Su for insightful discussions of the combinatorics of symmetric plabic graphs.  Finally, thanks to Jake Levinson and Gabriel Frieden for careful reading of this preprint, and many helpful suggestions.

\section{Background}
\label{back}

\subsection{Notation for partitions, root systems and Weyl groups}

For $a \in \mathbb{N}$, write $[a]$ for the set 
\begin{math}\{1,2,\ldots,a\} \subseteq \naturals,\end{math}
and let $[a,b]$ denote the set 
\begin{math}\{a,a+1,\ldots,b\}\end{math}
for $a \leq b$.  For $a > b$ we set 
\begin{math}[a,b] = \emptyset\end{math}.
 Let 
 \begin{math} I,J \subseteq \naturals\end{math} with 
 \begin{align}
 I &= \{i_1 < i_2 < \ldots < i_m\}\\
J &= \{j_1 < j_2 < \ldots < j_m\}.
 \end{align}
 We say $I \leq J$ if $i_r \leq j_r$ for all $1 \leq r \leq m$.  We denote the set of all $k$-element subsets of $[n]$ by ${{[n]}\choose k}$.

Let $\Phi$ denote a finite root system with simple roots $\{\alpha_i \mid 1 \leq i \leq m\}$ for some $m \in \mathbb{N}$.  Let $(W,S)$ denote the Weyl group of $\Phi$, with simple reflections $S = \{s_i \mid 1 \leq i \leq m\}$ corresponding to the $\alpha_i$.
Let $\ell$ denote the standard length function on $W$.  

For \begin{math} u,w \in W\end{math}, we write $u \leq w$ to denote a relation in the (strong) Bruhat order. A factorization 
\begin{math} u=vw \in W\end{math}
 is \emph{length additive} if 
\begin{equation} \ell(u) = \ell(v)+\ell(w). \end{equation}
We use 
\begin{math} u \leq_{(r)} w\end{math}
 to denote a relation in the \emph{right weak order}, so 
\begin{math} u \leq_{(r)} w\end{math}
if there exists $v \in W$ such that $uv= w$ and the factorization is length additive.  Similarly, we write 
\begin{math} u \leq_{(l)} w\end{math} 
to denote \emph{left weak order}, and say 
\begin{math} u \leq_{(l)} w\end{math} if there is a length-additive factorization $vu = w$.  All functions and permutations act on the left, so 
\begin{math} \sigma \rho \end{math}
means ``first apply $\rho$, then apply $\sigma$ to the result.''

We fix notation for the root systems of types $A_{n-1}$ and $C_n$ respectively.  Let $\{\epsilon_1,\ldots,\epsilon_n\}$ denote the standard basis of $\mathbb{R}^n$.  We realize the root system $A_{n-1}$ as a subset of the vector space
\[V \coloneqq \left\{ (\lm_1,\ldots,\lm_n) \in \mathbb{R}^n \left| \;\sum_{i=1}^n \lm_i = 0 \right.\right\}\]
The roots are given by 
\[\Phi = \{\epsilon_i-\epsilon_j \mid 1\leq i,j \neq n\}\]
and the simple roots are given by 
\[\{\alpha_i = \epsilon_i - \epsilon_{i+1} \mid 1 \leq i \leq n-1\}.\]
The Weyl group of type $A_{n-1}$ is the symmetric group $S_n$ on $n$ letters, acting by permutations on the standard basis.  
The simple reflection $s_i^A$ corresponding to $\alpha_i$ is the transposition $(i,i+1)$.  Let $(S_n)_k$ denote the parabolic subgroup of $S_n$ corresponding to $\{\alpha_i \mid i \neq k\}$.  Then $(S_n)_k$ is the Young subgroup $S_k \times S_{n-k}$
consisting of permutations which fix the sets $[k]$ and $[k+1,n]$.  
 
Similarly, we realize the root system of type $C_n$ as a subset of $\mathbb{R}^{n}$.  The roots are given by 
\[\{\pm \epsilon_i \pm \epsilon_j \mid 1 \leq i,j \neq n\} \cup \{2\epsilon_i \mid 1 \leq i \leq n\}\]
and the simple roots by 
\[\alpha_i =
\begin{cases} \epsilon_i - \epsilon_{i+1} & 1 \leq i \leq n-1\\
2\epsilon_n & i=n
\end{cases}
\]
The Weyl group  of type $C_n$ consists of permutations $\sigma$ of $\{\pm i \mid 1 \leq i \leq n\}$ which satisfy $\sigma(-i) = -\sigma(i)$.  
Permutations act on the standard basis, where we set $\epsilon_{-i} \coloneqq -\epsilon_i.$

Alternatively, we may realize the Weyl group of type $C_n$ as the subgroup $S_n^C$ of $S_{2n}$ consisting of permutations $\tau \in S_{2n}$ which satisfy
\begin{equation}\label{signed} \tau(2n+1-a) = 2n+1-\tau(a). \end{equation}
The simple generators $s^C_1,\ldots,s^C_n$ are given by
\[s^C_i = 
\begin{cases}s_i^As^A_{2n-i} & 1 \leq i \leq n-1\\
s_i^A & i = n
\end{cases}
\]
and the map $S_n^C \hookrightarrow S_{2n}$ is a Bruhat embedding \citep{BB05}.  With these conventions, the parabolic subgroup $(S_n^C)_n$ corresponding to $\{\alpha_i \mid i \neq n\}$ is the subgroup of permutations in $S_n \times S_n$ which satisfy \eqref{signed}.  
We denote the length functions on $S_n$ and $S_n^C$ by $\ell^A$ and $\ell^C$ respectively.

We number the rows of all matrices from top to bottom, and the columns from left to right.  When specifying matrix entries, position $(i,j)$ denotes row $i$ and column $j$.  For $w \in S_n$ or $S_n^C$, we let $w([a])$ denote the unordered set $\{w(1),w(2),\ldots,w(a)\}.$  

Fix $n \in \mathbb{N}$.  For $a \in [2n]$, we write $a'$ to denote $2n+1-a$.  Let $I \in {{[2n]}\choose{n}}$.  Then we define $R(I) \in {{[2n]}\choose{n}}$ by setting $R(I) = [2n] \backslash \{a' \mid a \in I\}.$

\subsection{Flag varieties, Schubert varieties and Richardson varietes}

We recall some basic facts about flag varieties, as stated for example in \citep{KLS14}.  Let $G$ be a semisimple Lie group over $\mathbb{C}$, and let $B_+$ and $B_-$ be a pair of opposite Borel subgroups of $G$. Then $G/B_+$ is a flag variety, and there is a set-wise inclusion of the Weyl group $W$ of $G$ into $G/B_+$.  The flag variety $G/B_+$ has a stratification by Schubert cells, indexed by elements of $W$.  For $w \in W$, the \emph{Schubert cell} $\mathring{X}_w$ is $B_-wB_+/B_+$ while the \emph{Schubert variety} $X_w$ is the closure of $\mathring{X}_w$. The \emph{opposite Schubert cell} $\mathring{Y}_w$ is $B_+wB_+/B_+$ while the \emph{opposite Schubert variety} $Y_w$ is the closure of $\mathring{Y}_w$.   The cells $\mathring{X}_w$ and $\mathring{Y}_w$ are both isomorphic to affine spaces.  They have codimension and dimension $\ell(w)$, respectively. Both Schubert and opposite Schubert cells give stratifications of $G/B_+$. 

The \emph{Richardson variety} $\mathring{R}_{u,w}$ is the transverse intersection $\mathring{R}_{u,w} = \mathring{X}_u \cap \mathring{Y}_w.$
This is empty unless $u \leq w$, in which case it has dimension $\ell(w) - \ell(u)$.  
The closure of $\mathring{R}_{u,w}$ is the \emph{Richardson variety} $R_{u,w}$.  Open Richardson varieties form a stratification of $G/B_+$ which refines the Schubert and opposite Schubert stratifications.

Let $P$ be a parabolic subgroup of $G$ containing $B_+$ and let $W_P$ be the Weyl group of $P$.  Then $G/P$ is a partial flag variety, and there is a natural projection $\pi_P:G/B_+ \rightarrow G/P$.  We are interested in the images of Richardson varieties under this projection map.

In this paper, we focus on flag varieties of types $A$ and $C$.  We use the superscripts $A$ and $C$ to indicate subvarieties of flag varieties of types $A$ and $C$, respectively. For example, the Schubert cell in the type $A$ flag variety corresponding to a Weyl group element $w$ is denoted $\mathring{X}_w^A$.  

\subsubsection{Type $A$}

Let $G = \Sl(n)$, a semisimple Lie group of type $A$.  Let $B_+$ be the subgroup of upper-triangular matrices, and let $B_-$ be the subgroup of lower-triangular matrices.  Let $\mathcal{F}\ell(n)$ be the quotient of $\Sl(n)$ by the right action of $B_+$.
Then $\mathcal{F}\ell(n)$ is an algebraic variety whose points correspond to flags
\begin{equation}V_{\bullet} = \{0 \subset V_1\subset V_2 \subset \cdots \subset V_n = \complexes^n\}\end{equation}
where $V_i$ is a subspace of $\complexes^n$ of dimension $i$.  
Hence an $n \times n$ matrix $M \in \Sl(n)$ represents the flag whose $i^{th}$ subspace is the span of the first $i$ columns of $M$. 

The Weyl group of type $A$ is the symmetric group $S_n$ on $n$ letters.  We may represent a permutation $w$ in $\mathcal{F}\ell(n)$ by any matrix $\overline{w}$ with $\pm 1$'s in positions $(w(i),i)$, and $0$'s everywhere else, where the number of $-1$'s is odd or even, depending on the parity of $w$.  These signs are necessarily to ensure that the matrix representative is contained in $\Sl(n)$.
Then the Schubert cell $\mathring{X}_{w}^A$ corresponding to $w$ is given by $B_{-}\overline{w}B_+,$ while the opposite Schubert cells $\mathring{Y}_w^A$ is $B_{+}\bar{w}B_+$. 

We have concrete descriptions of the Schubert and opposite Schubert varieties in $\mathcal{F}\ell(n)$, as stated in \citep[Section 4]{KLS13}.  For a subset $J$ of $[n]$, let 
\begin{math} \Prj_J:\complexes^n \rightarrow \complexes^J \end{math} be projection onto the coordinates indexed by $J$.  For a permutation $w \in S_n$, the Schubert cell corresponding to $w$ is given by
\begin{equation}\mathring{X}^A_{w}=\{ V_{\bullet} \in \mathcal{F}\ell(n) \mid \dim(\Prj_{[j]}(V_i)) = |w([i]) \cap [j]| \text{ for all } i\}\end{equation}
The closure of $\mathring{X}^A_w$ is the Schubert variety
\begin{equation}X_{w}^A=\{ V_{\bullet} \in \mathcal{F}\ell(n) \mid \dim(\Prj_{[j]}(V_i)) \leq |w([i]) \cap [j]| \text{ for all } i\}\end{equation}

Similarly, the opposite Schubert cell and opposite Schubert variety respectively are given by
\begin{equation}\mathring{Y}^A_w=\{ V_{\bullet} \in \mathcal{F}\ell(n) \mid \dim(\Prj_{[n-j+1,n]}(V_i)) = |w([i]) \cap [n-j+1,n]| \text{ for all } i\}\end{equation}
\begin{equation}Y^A_w=\{ V_{\bullet} \in \mathcal{F}\ell(n) \mid \dim(\Prj_{[n-j+1,n]}(V_i)) \leq |w([i]) \cap [n-j+1,n]| \text{ for all } i\}\end{equation}

Let $P$ denote the parabolic subgroup of $\Sl(n)$ consisting of block-diagonal matrices of the form
\[\begin{bmatrix}
C & D \\
0 & E
\end{bmatrix}\]
where the block $C$ is a $k \times k$ square, and $E$ is an $(n-k) \times (n-k)$ square.  Then $\Sl(n)/P$ is the Grassmannian $\Gr(k,n)$ of $k$-dimensional linear subspaces of the vector space $\complexes^n$.  

We may realize $\Gr(k,n)$ as the space of full-rank $k \times n$ matrices modulo the left action of $\text{GL}(k)$, the group of invertible $k\times k$ matrices; a matrix $M$ represents the space spanned by its rows.  
The natural projection
\begin{math} \pi_k:\mathcal{F}\ell(n) \rightarrow \Gr(k,n),\end{math}
carries a flag $V_{\bullet}$ to the $k$-plane $V_k$.  
If $V$ is a representative matrix for $V_{\bullet}$ then transposing the first $k$ columns of $V$ gives a representative matrix $M$ for $V_k$.  

The \emph{Pl{\"u}cker embedding}, which we denote $p$, maps $\Gr(k,n)$ into the projective space 
\begin{math}\mathbb{P}^{ {n\choose k}-1}\end{math}
with homogeneous coordinates $x_J$ indexed by the elements of 
\begin{math}{{[n]}\choose k}.\end{math}
For 
\begin{math} J \in {[n] \choose k}\end{math}
let $\Delta_J$ denote the minor with columns indexed by $J$.  Let $V$  be an $k$-dimensional subspace of $\complexes^n$ with representative matrix $M$.  Then $p(V)$ is the point defined by 
\begin{math} x_J = \Delta_J(M)\end{math}.
This map embeds $\Gr(k,n)$ as a smooth projective variety in 
\begin{math} \mathbb{P}^{ {n\choose k}-1}.\end{math}
The homogeneous coordinates $\Delta_J$ are known as \emph{Pl{\"u}cker  coordinates} on $\Gr(k,n)$.  The \emph{totally nonnegative Grassmannian}, denoted $\Gr_{\geq 0}(k,n),$ is the subset of $\Gr(k,n)$ whose Pl{\"u}cker coordinates are all nonnegative real numbers, up to multiplication by a common scalar. 

\subsubsection{Type $C$}

We now outline the same story in type $C$.  Our discussion follows \citep[Chapter 3]{BL00}.  However, we use the bilinear form given in \citep{Ma11}.  This choice yields a \emph{pinning} of $\Sp(2n)$, defined below, which is compatible with the \emph{standard pinning} for $\Sl(n)$.  We will use this fact frequently in what follows.

Let $V$ be the complex vector space $\mathbb{C}^{2n}$ with standard basis $e_1,\ldots,e_{2n}$.  Let $\langle \cdot, \cdot \rangle$ denote the non-degenerate, skew-symmetric form defined by
\[\langle e_i,e_j \rangle = \begin{cases}
(-1)^{j} &\text{ if } j = 2n+1-i\\
0 & \text{ otherwise}
\end{cases}\]
Let $E$ be the matrix of this bilinear form.  For example, for $n=2$, we have
\[E = \begin{bmatrix}
0 & 0 & 0 & 1 \\
0 & 0 & -1 & 0 \\
0 & 1 & 0 & 0\\
-1 & 0 & 0 & 0
\end{bmatrix}.\]
A subspace $U \subseteq V$ is \emph{isotropic} if $\langle u,v \rangle = 0$ for all $u,v \in V$.

The \emph{symplectic group} $\Sp(2n)$ is the group of matrices $A \in \Sl(2n)$ which leave the form $\langle \cdot, \cdot \rangle$ invariant.  Alternatively, define a map $\sigma:\Sl(2n) \rightarrow \Sl(2n)$ by 
\[\sigma(A) = E(A^t)^{-1}E^{-1}.\]
Then $\Sp(2n)$ is the group of all fixed points of $\sigma$.  It is a semi-simple Lie group of type $C_n$.

Let $B_+$, $B_-$ and $P$ be the subgroups of $\Sl(2n)$ given above, where $k = n$.
The Borel subgroups $B_+$ and $B_{-}$ are both stable under $\sigma$, and so is the parabolic $P$.  Let $B_+^{\sigma}$, $B_{-}^{\sigma},$ and $P^{\sigma}$ denote the intersection of $B_+$, $B_{-}$ and $P$ respectively with $\Sp(2n)$.  Then $B_+^{\sigma}$ and $B_{-}^{\sigma}$ are a pair of opposite Borel subgroups of $\Sp(2n)$, while $P^{\sigma}$ is a parabolic subgroup of $\Sp(n)$.  

The generalized flag variety $\Sp(2n)/B_{+}^{\sigma}$ embeds in $\Fl(2n)$ in the obvious way, and $\Sp(2n)/P^{\sigma}$ embeds in $\Gr(n,2n)$.  The image of $\Sp(2n)/P^{\sigma}$ is precisely the subset of $\Gr(n,2n)$ corresponding to maximal isotropic subspaces; that is, the Lagrangian Grassmannian $\Lm(2n)$.  We have a commutative diagram of inclusions and projections, shown in Figure \ref{diag}.  

\begin{figure}[h]
\centering
\begin{tikzcd}
\Sl(2n) \arrow[two heads]{r} &  \Fl(2n) \arrow[two heads]{r} & \Gr(n,2n)\\
\Sp(2n) \arrow[hook]{u} \arrow[two heads]{r} &  \Sp(2n)/B^{\sigma}_+ \arrow[hook]{u} \arrow[two heads]{r}& \Lm(2n) \arrow[hook]{u}\\
\end{tikzcd}
\caption{Realizing $\Lm(2n)$ as a submanifold of $\Gr(n,2n)$.}
\label{diag}
\end{figure}

For each permutation $w \in S^C_n$, 
let $\overline{w}$ denote the matrix with $(-1)^{\rho_i}$'s at positions $(w(i),i)$ and $0$'s everywhere else, where
\[\rho_i = 
\begin{cases}
1 & \text{if }1 \leq i \leq n \text{ and $w(i)$ and $i$ have opposite parity}\\
0 & \text{otherwise}
\end{cases}
.\]

Then the Schubert cell $\mathring{X}_{w}^C$ corresponding to $w$ is given by $B_{-}^{\sigma}\overline{w}B_+^{\sigma},$ while the opposite Schubert cell $\mathring{Y}_w^C$ is $B_{+}^{\sigma}\overline{w}B_+^{\sigma}$. It is straightforward to check that the following set-theoretic identities hold under the embedding $\Sp(2n)/B_+^{\sigma} \hookrightarrow \mathcal{F}\ell(2n)$ given above:
\begin{align*} \mathring{X}_w^C &= \mathring{X}^A_w \cap (\Sp(2n)/B_{+}^{\sigma}) \\
\mathring{Y}_w^C&= \mathring{Y}^A_w \cap (\Sp(2n)/B_{+}^{\sigma})\\
\mathring{R}_{u,w}^C &= \mathring{R}_{u,w}^A \cap (\Sp(2n)/B_{+}^{\sigma}).
\end{align*}

\subsection{Projected Richardson varieties and $P$-order}

We now return to the case of general $G/P$ with all notation defined as above.  Let $u,w \in W$.  The variety $\Pi_{u,w} \coloneqq \pi_P(R_{u,w})$ is a \emph{projected Richardson variety}.  Note that we do not have a one-to-one correspondence between Richardson varieties and projected Richardson varieties.  Rather, for each $u \leq w$, there is a family of Richardson varieties $R_{u',w'}$ such that $\Pi_{u',w'} = \Pi_{u,w}$.

Projected Richardson varieties have a number of nice geometric properties.  In particular, they are normal, Cohen-Macaulay, and have rational singularities \citep{KLS13}.  When $G/B = \mathcal{F}\ell(n)$ and $G/P = \Gr(k,n)$, projected Richardson varieties coincide with \emph{positroid varieties,} which have been studied extensively. 

We review the notion of $P$-Bruhat order, as defined in \citep{KLS14}.  
We say $u \lessdot_P w$ if $u \lessdot w$ in the usual Bruhat order and $uW_P \neq wW_P$.  The \emph{$P$-Bruhat} order on $W$ is the transitive closure of these relations.  Note that, as the notation suggests, the relations $\lessdot_P$ are the covering relations in the resulting partial order.  When $W = S_n$ and $W_P = S_k \times S_{n-k}$, we obtain the $k$-order introduced in \citep{BS98}.
We use the symbol $\leq_P$ to denote $P$-Bruhat order.  If $u \leq_P w,$
we write $[u,w]_P$ for the $P$-Bruhat interval 
\begin{math}\{v \mid u \leq_P v \leq_P w\}.\end{math}  

Every (closed) projected Richardson variety may be realized as $\Pi_{u,w}$ with $u \leq_P w$. The projection map $\pi_P$ is an isomorphism when restricted to $\mathring{R}_{u,w}$ if and only if $u \leq_P w$. In this case, we define the \emph{open projected Richardson variety} $\mathring{\Pi}_{u,w}$ to be $\pi_P(\mathring R_{u,w})$.  The variety $\mathring{\Pi}_{u,w}$ is irreducible, of dimension $\ell(w)-\ell(u)$ \citep{KLS14}.  The projected Richardson variety $\Pi_{u,w}$ is the closure of $\mathring{\Pi}_{u,w}$, and $\pi_P$ maps $R_{u,w}$ birationally to $\Pi_{u,w}$ \citep{KLS13}.

We now review some combinatorial results about $P$-Bruhat order.

\begin{lem}{\citep{KLS14}} Suppose we have $u \leq v$ and $x \leq y$ in $W$, and suppose there is some $z \in W_P$ such that $x = uz$, $y = vz$ with both factorizations length-additive.  Then $u \leq_P v$ if and only if $x \leq_P y$. 
\end{lem}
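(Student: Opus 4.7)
The plan is to establish a bijection between saturated Bruhat chains from $u$ to $v$ and saturated Bruhat chains from $x$ to $y$ via right multiplication by $z$, and to show that this bijection preserves the ``$P$-cover'' status step by step. Since $P$-Bruhat order is the transitive closure of $\lessdot_P$, the equivalence $u \leq_P v \iff x \leq_P y$ will follow immediately from such a bijection.

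For the forward direction, I would start with a saturated $P$-Bruhat chain $u = u_0 \lessdot_P u_1 \lessdot_P \cdots \lessdot_P u_k = v$. Each cover in $W$ has the form $u_{i+1} = t_i u_i$ for some (not necessarily simple) reflection $t_i$, so setting $x_i \coloneqq u_i z$ yields $x_{i+1} = t_i x_i$; hence $\ell(x_{i+1}) - \ell(x_i) = \pm 1$ by the strong exchange condition. The key calculation is the telescoping identity
\[\sum_{i=0}^{k-1} \bigl(\ell(x_{i+1}) - \ell(x_i)\bigr) \;=\; \ell(vz) - \ell(uz) \;=\; \ell(v) - \ell(u) \;=\; k,\]
where the middle equality uses the length-additivity of both $uz$ and $vz$. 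Since each summand is $\pm 1$ and the sum equals $k$, every summand must be $+1$, so $x_i \lessdot x_{i+1}$ in Bruhat order. As a byproduct, the same telescoping shows that each intermediate factorization $u_i \cdot z = x_i$ is itself length-additive.

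To finish the forward direction I would use that $z \in W_P$: this gives $x_i W_P = u_i z W_P = u_i W_P$, so $x_i W_P \neq x_{i+1} W_P$ if and only if $u_i W_P \neq u_{i+1} W_P$. Hence every step of the lifted chain $x_0 \lessdot x_1 \lessdot \cdots \lessdot x_k$ is again a $P$-cover, which proves $x \leq_P y$. The reverse direction is entirely symmetric: starting from a saturated $P$-Bruhat chain $x = x_0 \lessdot_P \cdots \lessdot_P x_k = y$, the chain $u_i \coloneqq x_i z^{-1}$ satisfies $u_0 = u$ and $u_k = v$, and the same telescoping argument (now exploiting $\ell(y) - \ell(x) = \ell(v) - \ell(u)$) shows that each step is a Bruhat cover. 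The coset condition transfers just as before, giving $u \leq_P v$.

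The main obstacle is the step that forces every intermediate move in the lifted chain to be length-increasing rather than length-decreasing. This is precisely where the length-additivity of the two endpoint factorizations is used, through the pinning of the total length change to $k$; any intermediate ``dip'' in length would violate this accounting. Once that step is in place, both the Bruhat cover structure and the preservation of the $P$-coset condition transfer along the bijection almost for free, because right-multiplication by an element of $W_P$ does not change the $W_P$-coset.
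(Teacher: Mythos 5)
There is a genuine gap at the central step of your argument. You assert that because $x_{i+1} = t_i x_i$ for a reflection $t_i$, the strong exchange condition gives $\ell(x_{i+1}) - \ell(x_i) = \pm 1$. This is false: multiplying by a (non-simple) reflection changes length by an odd integer that can have absolute value greater than $1$ (in $S_3$, multiplying the identity by the reflection $s_1s_2s_1$ raises length by $3$). Once the summands are merely odd integers rather than $\pm 1$, the telescoping identity $\sum_i (\ell(x_{i+1})-\ell(x_i)) = k$ no longer forces each summand to equal $+1$ (for instance $3 + (-1) = 2$). Since the length-additivity of the intermediate factorizations $u_i z$ --- which you describe as a ``byproduct'' --- is exactly what is needed to conclude $x_i \lessdot x_{i+1}$, the heart of the proof is missing. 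The coset bookkeeping at the end ($x_iW_P = u_iW_P$ because $z \in W_P$) is fine, but it only matters once the cover relations are established.

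The statement that every saturated $P$-Bruhat chain from $u$ to $v$ lifts is in fact true, but proving it requires different tools. The standard route is to induct on $\ell(z)$, reducing to the case $z = s$ a simple reflection of $W_P$ (using that $\ell(uz) = \ell(u) + \ell(z)$ forces length-additivity of all prefixes of a reduced word for $z$). For $z = s$ one first shows that the non-descent condition $u_is > u_i$ propagates along the chain: if $u_i s > u_i$, $u_i \lessdot_P u_{i+1}$, and $u_{i+1}s < u_{i+1}$, then the lifting property of Bruhat order gives $u_is \leq u_{i+1}$, and comparing lengths forces $u_{i+1} = u_is \in u_iW_P$, contradicting the $P$-cover condition. (Note this is where $u_{i+1} \notin u_iW_P$ is used in an essential way, not just in the final coset check.) With $u_is > u_i$ and $u_{i+1}s > u_{i+1}$ in hand, the lifting property applied to $u_i < u_{i+1}s$ yields $u_is \leq u_{i+1}s$, and the length count makes this a cover. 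I would encourage you to rebuild the proof along these lines rather than trying to patch the telescoping argument.
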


Hence there is an equivalence relation on $P$-Bruhat intervals, which is generated by setting 
\begin{math}[u,w]_P \sim [x,y]_P\end{math}
 if there is some $z \in W_P$
 such that $x = uz$ and $y=wz$, with both factorizations length-additive.  We write 
\begin{math} \langle u, w \rangle_P\end{math}
for the equivalence class of $[u,w]_P$ and denote the set of all such classes by $\mathcal{Q}(W,W_P)$.  

Define a partial order on $\mathcal{Q}(W,W_P)$ by setting 
\begin{math}\langle u, w \rangle_P \leq \langle x, y \rangle_P\end{math} 
if there exist representatives $[u',w']_P$ of 
\begin{math} \langle u,w \rangle_P\end{math}
 and $[x',y']_P$ of 
 \begin{math} \langle x, y \rangle_P\end{math}
 with \begin{math} [x',y']_P\subseteq [u',w']_P.\end{math}
 The poset $\mathcal{Q}(W,W_P)$ was first studied by Rietsch, in the context of closure relations for totally nonnegative cells in general flag manifolds \citep{Rie06}.  Williams proved a number of combinatorial results about this poset directly \citep{Wil07}.  We quote a few more results from \citep{KLS13}.

\begin{lem}\label{stillless}  Suppose $u \leq_P w$, $u = u^Pu_P$, and $w = w^Pw_P$, where $u^P$ and $w^P$ are minimal-length coset representatives for $W_P$, and $u_P,w_P \in W_P$.  Then $w_P \leq_{(l)} u_P$.
\end{lem}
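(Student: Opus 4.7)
The plan is to reduce the claim to the case of a single $P$-Bruhat cover $u \lessdot_P w$ and then apply the strong exchange property to the reduced expression of $w$ coming from its parabolic factorization.

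Since $\leq_{(l)}$ is transitive and $\leq_P$ is by definition the transitive closure of its covering relations, induction on $\ell(w) - \ell(u)$ reduces the problem to the cover case $u \lessdot_P w$. Fix reduced expressions $w^P = s_{i_1} \cdots s_{i_k}$ and $w_P = s_{j_1} \cdots s_{j_m}$, so that the concatenation $s_{i_1} \cdots s_{i_k} s_{j_1} \cdots s_{j_m}$ is a reduced expression for $w$ by length additivity of $w = w^P w_P$. Because $u \lessdot w$ in Bruhat order, the strong exchange property produces a reduced expression for $u$ obtained by deleting a single letter from this expression, and the proof splits into two cases depending on where the deleted letter lies.

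If the deleted letter lies in the $w_P$ suffix, then $u = w^P w_P'$ length additively for some $w_P' \in W_P$. Since $w^P$ is already in $W^P$, uniqueness of the parabolic factorization forces $u^P = w^P$, so $uW_P = wW_P$, contradicting the definition of $\lessdot_P$. Therefore the deletion must occur in the $w^P$ prefix, producing a length additive factorization $u = u' w_P$ with $\ell(u') = \ell(w^P) - 1$. Writing $u' = (u')^P (u')_P$ parabolically gives $u = (u')^P \bigl( (u')_P w_P \bigr)$ with $(u')^P \in W^P$ and $(u')_P w_P \in W_P$, so by uniqueness of parabolic factorization $u^P = (u')^P$ and $u_P = (u')_P w_P$. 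Comparing length additivity of $u = u^P u_P$ with $\ell(u) = \ell(u') + \ell(w_P) = \ell((u')^P) + \ell((u')_P) + \ell(w_P)$ then yields $\ell(u_P) = \ell((u')_P) + \ell(w_P)$, which is exactly the condition $w_P \leq_{(l)} u_P$.

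The main subtlety is the case analysis itself: strong exchange only guarantees that some single-letter deletion works, and the real content of the argument is the observation that the $uW_P \neq wW_P$ hypothesis built into $\lessdot_P$ prevents any such deletion from falling inside the $w_P$ block, which is precisely what forces $w_P$ to persist as a right factor of $u_P$.
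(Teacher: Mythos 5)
Your argument is correct: the reduction to a single $P$-Bruhat cover, the elimination of a deletion inside the $w_P$ suffix via $uW_P\neq wW_P$, and the length count showing $u_P=(u')_Pw_P$ length-additively are all sound, and the conclusion matches the paper's convention that $w_P\leq_{(l)}u_P$ means a length-additive factorization $v\,w_P=u_P$. Note that the paper itself gives no proof of this lemma --- it is quoted from Knutson--Lam--Speyer --- but your exchange-property argument is essentially the standard one for this fact, so there is nothing to reconcile.
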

\begin{lem}\label{minrep} Suppose $u \leq w$, and $w$ is a minimal-length coset representative for $wW_P$.  Then $u \leq_P w$.
\end{lem}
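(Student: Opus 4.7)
The plan is to induct on $\ell(w) - \ell(u)$. The base case $\ell(u) = \ell(w)$ forces $u = w$, and the conclusion is immediate. For the inductive step, I aim to produce a Bruhat cover $u \lessdot u'$ with $u' \leq w$ and $u W_P \neq u' W_P$; such a cover is a $P$-cover $u \lessdot_P u'$ by definition, and since $w$ remains the minimal-length representative of $wW_P$ while $\ell(w) - \ell(u') < \ell(w) - \ell(u)$, the inductive hypothesis applied to $u' \leq w$ gives $u' \leq_P w$, so that $u \leq_P w$ by transitivity.

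To produce the cover, I choose a right descent $s$ of $w$, which exists because $\ell(w) > \ell(u) \geq 0$. Minimality of $w$ in $wW_P$ forces $s \notin W_P$: otherwise $ws$ would be a strictly shorter element of the same coset. The lifting property for Bruhat order (see, e.g., \citep{BB05}, Proposition~2.2.7) applied to $u \leq w$ and the descent $s$ gives two subcases. If $s$ is not a right descent of $u$, lifting yields $us \leq w$ with $\ell(us) = \ell(u) + 1$, so $u \lessdot us \leq w$; taking $u' = us$, we have $u' W_P \neq u W_P$ because $s \notin W_P$, completing this subcase.

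The main obstacle is the complementary subcase, in which $s$ is also a right descent of $u$ and the right-descent strategy alone fails. A symmetric argument using a left descent $t$ of $w$ can be attempted: an analogous length-additivity argument shows $tw \lessdot_P w$, and lifting applies in the left-descent version. However, there exist pairs $u \leq w$ (for instance $u = s_1 s_3 \leq s_1 s_2 s_3 = w$ in $S_4$) for which every descent of $w$ on either side is simultaneously a descent of $u$, so the descent strategy must be supplemented. The essential input in such cases is that $w \notin uW_P$: if it were, then $u \in wW_P$ would force $\ell(u) \geq \ell(w)$ by minimality of $w$, contradicting $u < w$. Hence any saturated Bruhat chain from $u$ to $w$ must exit the coset $uW_P$ at some step, and an application of the diamond property of Bruhat intervals to the first exit step then yields the required cover $u \lessdot u' \leq w$ with $u' W_P \neq u W_P$, closing the induction.
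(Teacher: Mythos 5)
The paper does not actually prove this lemma; it quotes it from \citep{KLS13}, so I am assessing your argument on its own terms. Your framework is sound: the induction on $\ell(w)-\ell(u)$, the reduction to producing one cover $u \lessdot u' \leq w$ with $u'W_P \neq uW_P$, the observation that every right descent $s$ of $w$ lies outside $W_P$, the lifting-property subcase when $us>u$, and the deduction that $w \notin uW_P$ are all correct, and your example $s_1s_3 \leq s_1s_2s_3$ rightly shows that descents alone cannot finish the argument.

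The gap is in your final sentence. The diamond property says only that the length-two interval $[u_{i-2},u_i]$ straddling the first exit step has exactly two intermediate elements; it does not say that the second one lies outside $uW_P$, which is what you need in order to move the exit step one position earlier and, by iteration, down to the first step. The statement you actually need is: if both intermediate elements $vr_1,\ vr_2$ of a length-two Bruhat interval $[v,y]$ have $r_1,r_2 \in W_P$, then $y \in vW_P$. This is true, but it is a theorem rather than a formal consequence of the diamond property: one needs that $v^{-1}y$ lies in the reflection subgroup $\langle r_1,r_2\rangle$ (Dyer's dihedral-subgroup description of length-two Bruhat intervals), or, in types $A$ and $C$, a direct analysis of the two factorizations $v^{-1}y=r_1t_1=r_2t_2$ into products of two transpositions, where the disjoint-support and shared-support cases each force $v^{-1}y \in \langle r_1,r_2\rangle \subseteq W_P$. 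Granting that lemma, its contrapositive at the first exit (where $u_i \notin uW_P$ but $u_{i-1} \in uW_P$) shows the other intermediate element of $[u_{i-2},u_i]$ lies outside $uW_P$, and repeating the swap pushes the exit to the first step, which closes your induction. As written, however, the hard case --- the only case your own example shows is genuinely needed --- rests on an unproved and mislabeled assertion.
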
 

\begin{lem}
 Each $\langle x,y \rangle_P$ has a unique representative $[x',y']_P$ where $y'$ is of minimal length in its left coset $y'W_P$.
\end{lem}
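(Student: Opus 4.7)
The plan is to establish existence by explicit construction and uniqueness by identifying two invariants that the generating relation preserves.

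For existence, I would start from an arbitrary representative $[u,w]_P$ and form the parabolic decompositions $u = u^P u_P$ and $w = w^P w_P$, with $u^P, w^P$ the minimal-length representatives of their cosets and both factorizations length-additive. Lemma~\ref{stillless} gives $w_P \leq_{(l)} u_P$, so there exists $z \in W_P$ with $u_P = z\, w_P$ length-additively. Setting $x' := u w_P^{-1} = u^P z$ and $y' := w^P$, the factorizations $u = x' \cdot w_P$ and $w = y' \cdot w_P$ are both length-additive, so the generating relation (read in reverse) gives $[x', y']_P \sim [u, w]_P$, and $y' = w^P$ is by construction the minimal-length element of $wW_P = y' W_P$.

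For uniqueness, I would observe that the generating step $[u,w]_P \mapsto [uz, wz]_P$ preserves two quantities: the group element $wu^{-1}$, since $(wz)(uz)^{-1} = w u^{-1}$, and the coset $wW_P$, since $wz \in wW_P$ for $z \in W_P$. Both invariants therefore descend to the full equivalence class. Now if $[x',y']_P$ and $[x'',y'']_P$ are two representatives with $y', y''$ minimal in their respective cosets, the coset invariant forces $y' W_P = y'' W_P$, and uniqueness of the minimal-length element in a coset gives $y' = y''$; combining with $y'(x')^{-1} = y''(x'')^{-1}$ then yields $x' = x''$.

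The main technical point to watch is that the interval $[x', y']_P$ produced in the existence step is still a $P$-Bruhat interval, i.e.\ that $x' \leq_P y'$. By the KLS14 lemma stated just before Lemma~\ref{stillless}, this reduces to verifying the ordinary Bruhat inequality $x' \leq y'$, which in turn follows from the standard subword-based fact that $u \leq w$ together with a common length-additive right factor $w_P$ forces $uw_P^{-1} \leq w w_P^{-1}$. Once this lifting step is in hand, all the remaining content is bookkeeping with the parabolic decomposition, Lemma~\ref{stillless}, and the two invariants.
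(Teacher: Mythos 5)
Your proof is correct. The paper quotes this lemma from KLS13 without proof, so there is no in-paper argument to compare against; but your construction and your uniqueness argument via invariants are the natural ones, and the details check out. For existence you correctly verify both length-additivities ($\ell(u^Pz)=\ell(u^P)+\ell(z)$ holds automatically because $u^P$ is a minimal coset representative and $z\in W_P$), and the ``subword-based fact'' you cite (cancelling a common length-additive right factor preserves Bruhat order) is the standard right-multiplication lifting property, provable by induction on $\ell(w_P)$. For uniqueness, both $yx^{-1}$ and $yW_P$ are indeed invariant under the generator of the equivalence in either direction, so they descend to the class, and the rest follows.

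One minor simplification worth noting: once you know $x'\leq y'$ in ordinary Bruhat order, you do not need to route through the KLS14 comparison lemma to get $x'\leq_P y'$. Since $y'=w^P$ is by construction minimal in its coset, Lemma~\ref{minrep} of the paper gives $x'\leq_P y'$ immediately. The KLS14 lemma also works (you supply both required Bruhat inequalities and the length-additive $z=w_P$), but Lemma~\ref{minrep} is the one-step version and avoids having to reinvoke $u\leq_P w$.
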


Hence elements of $\mathcal{Q}(W,W_P)$ are in bijection with pairs $(u,w)$ where $u \leq w$, and $w$ is of minimal length  in its coset $wW_P$.  In the case where $W = S_n$ and $W_P = S_k \times S_{n-k}$, these minimal-length coset representatives are called \emph{Grassmannian permutations} of type $(k,n)$.  Concretely, a permutation is Grassmannian of type $(k,n)$ if it is increasing on $[k]$ and on $[k+1,n]$.  The following lemma is due to Postnikov \citep[Section 20]{Pos06}.  

\begin{lem}\label{grass}Let $u,w \in S_n$, and suppose $w$ is Grassmannian of type $(k,n)$.  Then $u \leq w$ if and only if $u(i) \leq w(i)$ for all $i \in [k]$, and $u(i) \geq w(i)$ for all $i \in [k+1,n]$.
\end{lem}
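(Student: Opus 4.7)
The plan is to deduce the lemma from the Ehresmann (tableau) criterion for Bruhat order on $S_n$: one has $u \leq w$ if and only if for every $i \in [n-1]$ the sorted sequence of $u([i])$ is componentwise $\leq$ the sorted sequence of $w([i])$. By taking complements $[i] \leftrightarrow [i+1,n]$ and reversing the inequality, this is equivalent to the dual statement that $\mathrm{sort}(u([i+1,n])) \geq \mathrm{sort}(w([i+1,n]))$ componentwise for each $i$. Because $w$ is Grassmannian of type $(k,n)$, the values $w(1)<\cdots<w(k)$ are already sorted, and so are $w(k+1)<\cdots<w(n)$; hence $w([i])$ is sorted as $(w(1),\ldots,w(i))$ for $i \leq k$ and $w([i+1,n])$ is sorted as $(w(i+1),\ldots,w(n))$ for $i \geq k$. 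These observations reduce both directions of the lemma to extracting a single coordinate of the Ehresmann criterion at the ``right'' index.

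For the forward direction, suppose $u \leq w$. If $i \in [k]$, the $i$-th (i.e.\ largest) entry of $\mathrm{sort}(u([i]))$ is at least $u(i)$, and by Ehresmann it is $\leq w(i)$, so $u(i) \leq w(i)$. If $i \in [k+1,n]$, apply the dual criterion at index $i-1$: the first entry of $\mathrm{sort}(u([i,n]))$ equals $\min u([i,n]) \leq u(i)$, and it is $\geq w(i)$, so $u(i) \geq w(i)$.

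For the reverse direction, assume the stated coordinatewise inequalities. To verify the tableau criterion at $i \leq k$, fix $j \leq i$ and note that $u(r) \leq w(r) \leq w(j)$ for every $r \leq j$, since $w$ is increasing on $[k]$. Hence $u([j])$ is a $j$-element subset of $[1,w(j)]$ lying inside $u([i])$, which forces at least $j$ entries of $u([i])$ to be $\leq w(j)$; consequently the $j$-th smallest entry of $u([i])$ is $\leq w(j)$, as required. For $i \geq k$ we run the symmetric argument with the dual criterion: for each $r \leq n-i$, the inclusion $u([i+r,n]) \subseteq u([i+1,n])$ exhibits $n-i-r+1$ elements of $u([i+1,n])$ that are $\geq w(i+r)$, which forces the $r$-th smallest entry of $u([i+1,n])$ to be $\geq w(i+r)$.

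The only real obstacle is the reverse direction: lifting the pointwise inequalities on $u$ and $w$ to the sorted comparisons demanded by the Ehresmann criterion. This is precisely where the Grassmannian hypothesis does the work, since it allows one to bound every prefix $u([j])$ uniformly by $w(j)$ (respectively every suffix $u([i+r,n])$ from below by $w(i+r)$), turning the sorted comparisons into a straightforward counting argument.
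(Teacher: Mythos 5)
Your proof is correct. Note that the paper itself gives no argument for this lemma---it simply cites \citep[Section 20]{Pos06}---so there is nothing internal to compare against; your deduction from the Ehresmann tableau criterion (checking prefixes $u([i])$ for $i\leq k$ and, dually, suffixes $u([i+1,n])$ for $i\geq k$, with the Grassmannian hypothesis supplying the sortedness of $w$ on $[k]$ and $[k+1,n]$ that makes the counting work) is a complete, self-contained, and standard proof of the statement.
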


If $u \leq_P w$ and $u' \leq_P w'$, then $\Pi_{u,w} = \Pi_{u',w'}$ if and only if $\langle u,w \rangle_P = \langle u',w' \rangle_P.$  Moreover, in this case $\mathring{\Pi}_{u,w}$ and $\mathring{\Pi}_{u',w'}$ are equal.  Hence there is a unique open projected Richardson variety $\mathring{\Pi}_{\langle u,w \rangle_P}$ corresponding to $\langle u,w \rangle_P$.  The poset $\mathcal{Q}(W,W_P)$ is isomorphic to the poset of projected Richardson varieties, ordered by \emph{reverse} inclusion \citep{Rie06}.  

When $W = S_n$ and $W_P = S_k \times S_{n-k}$, we denote the poset $\mathcal{Q}(W,W_P)$ by $\mathcal{Q}(k,n)$.  When $W=S_n^C$ and $W_P = (S_n^C)_n$, we denote the $P$-Bruhat order by $\leq_n^C$ and write $\mathcal{Q}^C(2n)$ for $\mathcal{Q}(W,W_P)$.  We denote equivalence classes in $\mathcal{Q}(k,n)$ and $\mathcal{Q}^C(2n)$ by $\langle u, w \rangle_n$ and $\langle u, w \rangle_n^C$ respectively.

\subsection{Deodhar components and MR parametrizations}

\subsubsection{Distinguished subexpressions}

Deodhar defined a class of cell decompositions for general flag varieties $G/B$ with components indexed by \emph{distinguished subexpressions} of the Weyl group $W$ of $G$ \citep{Deo85}.  Let $\mf{w}=s_{i_1}\cdots s_{i_m}$ be a reduced word for some $w \in W$.  We gather some facts about distinguished subexpressions, borrowing most of our conventions from \citep{MR04}.  A \emph{subexpression} $\mf{u}$ of $\mf{w}$ is an expression obtained by replacing some of the factors $s_{i_j}$ of $\mf{w}$ with the identity permutation, which we denote by $1$.  

We write \begin{math}\mf{u} \preceq \mf{w}\end{math} to indicate that $\mf{u}$ is a subexpression of $\mf{w}$.  We denote the $t^{th}$ factor of $\mf{u}$,  which may be either $1$ or a simple transposition, by $u_t$, and write $u_{(t)}$ for the product \begin{math}u_1u_2\ldots u_t\end{math}.  For notational convenience, we set \begin{math}u_0 = u_{(0)} = 1\end{math}.  We denote the $t^{th}$ simple transposition in $\mf{u}$ by $u^t$.  

\begin{defn}A subexpression $\mf{u}$ of $\mf{w}$ is called \textbf{distinguished} if we have
\begin{equation}u_{(j)} \leq u_{(j-1)}s_{i_j}\end{equation}
for all \begin{math}1 \leq j \leq m\end{math}.
\end{defn}

\begin{defn}A distinguished subexpression $\mf{u}$ of $\mf{w}$ is called \textbf{positive distinguished} if 
\begin{equation}u_{(j-1)} < u_{(j-1)} s_{i_j}\end{equation}
for all \begin{math}1 \leq j \leq m\end{math}
We will sometimes abbreviate the phrase ``positive distinguished subexpression'' to \emph{PDS}.
\end{defn}

Given a subexpression \begin{math}\mf{u} \preceq \mf{w}\end{math}, we say $\mf{u}$ is \emph{a subexpression for} \begin{math}u= u^1u^2\cdots u^r \end{math}.  By abuse of notation, we identify the subexpression $\mf{u}$ with the word \begin{math} u^1u^2\cdots u^{r}\end{math}, also denoted $\mf{u}$.  If the subexpression $\mf{u}$ is positive distinguished, then the corresponding word is reduced.  The following lemma is an easy consequence of the above definitions.

\begin{lem}{\citep{MR04}}
Let \begin{math} u \leq w \in W,\end{math} and let $\mf{w}$ be a reduced word for $w$.  Then the following are equivalent
\begin{enumerate}
\item $\mf{u}$ is a positive distinguished subexpression of $\mf{w}$.
\item $\mf{u}$ is the lexicographically first subexpression for $u$ in $\mf{w}$, working from the right.
\end{enumerate}
In particular, there is a unique PDS for $u$ in $\mf{w}$.
\end{lem}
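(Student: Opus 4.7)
The plan is to argue by induction on $m=\ell(w)$, establishing the uniqueness of the PDS and its identification with the lex-first subword in a single inductive step. The base case $m=0$ is trivial: only $u=1$ is possible and the empty subexpression is the unique PDS.

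For the inductive step, set $s=s_{i_m}$ and let $\mf{w}'=s_{i_1}\cdots s_{i_{m-1}}$ be a reduced word for $w':=ws$ of length $m-1$. The key observation is that the final letter $u_m$ of any PDS is forced by whether $s$ is a right ascent or descent of $u$. If $us>u$ we cannot have $u_m=s$, for that would give $u_{(m-1)}=us$ and the PDS inequality $u_{(m-1)}<u_{(m-1)}s$ at $j=m$ would read $us<u$, contradicting the ascent; hence $u_m=1$. Symmetrically, if $us<u$ then $u_m=1$ would force $u<us$, so $u_m=s$. Having pinned down $u_m$, I would invoke the Bruhat lifting property (property Z, as in Humphreys 5.9): in the ascent case one has $u\leq w'$ and the truncation $\mf{u}':=u_1\cdots u_{m-1}$ is a PDS for $u$ in $\mf{w}'$ (the remaining PDS inequalities for $\mf{u}$ and $\mf{u}'$ are literally the same); in the descent case $us\leq w'$ and $\mf{u}'$ is a PDS for $us$ in $\mf{w}'$. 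The inductive hypothesis then gives a unique such $\mf{u}'$, hence a unique $\mf{u}$, which proves the ``in particular'' clause.

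For $(1)\Leftrightarrow(2)$, I would interpret ``lexicographically first working from the right'' as the output of the greedy right-to-left procedure that, scanning positions $j=m,m-1,\ldots,1$, sets $u_j=s_{i_j}$ precisely when $s_{i_j}$ is a right descent of the element that still remains to be expressed. The dichotomy above shows that this greedy rule at position $m$ coincides with the PDS choice of $u_m$, and by induction the two subwords agree throughout. The main obstacle is pinning down the correct reading of ``lex-first from the right'' — essentially, showing that among all subexpressions for $u$ in $\mf{w}$ the PDS is the unique one in which the selected simple reflections are pushed as far to the right as possible, matching the greedy procedure — but once the convention is fixed and the Bruhat lifting property is in hand, both the equivalence and the uniqueness follow from routine induction.
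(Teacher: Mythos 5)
The paper does not actually prove this lemma; it is quoted directly from Marsh and Rietsch \citep{MR04} without proof, with only a clarifying gloss on what ``lexicographically first from the right'' means (the greedy right-to-left descent-scanning procedure). So there is no in-paper argument to compare your proposal against, and you should regard your write-up as a freestanding proof.

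That said, your argument is correct and is essentially the natural inductive proof one would give. The mechanics check out: if $u_m = s$ then $u_{(m-1)} = us$ and the positivity inequality at $j=m$ reads $us < u$, so an ascent forces $u_m=1$ and a descent forces $u_m=s$; the lifting property gives $u \leq w' = ws$ in the ascent case and $us \leq w'$ in the descent case; truncation is weight-preserving for the conditions at $j \leq m-1$ (the prefixes $u_{(j-1)}$ are the same), and the $j=m$ condition is satisfied by hypothesis in each branch, so appending $1$ or $s$ to the inductively unique PDS of $u$ or $us$ in $\mathbf{w}'$ gives both existence and uniqueness. A couple of details worth spelling out if you were to write this formally: the case $u=w$ should be handled separately (trivially, $\mathbf{u}=\mathbf{w}$), since the lifting property is stated for $u<w$; and the equivalence of your greedy rule with the paper's phrasing via $s_{i_j} \leq_{(l)} (\text{remaining element})$ --- note the paper uses left weak order, and $s \leq_{(l)} v$ is exactly the statement that $s$ is a right descent of $v$ --- deserves one sentence so the reader does not have to unwind the convention. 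Also note that a subexpression satisfying the positivity condition is automatically distinguished (if $u_j=1$ it is the positivity inequality itself; if $u_j = s_{i_j}$ the distinguishedness inequality is an equality), so you do not need to track the two conditions separately.
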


Condition 2 means precisely that the factors $u^t$ are chosen greedily, as follows.  Suppose $\ell(u) = r$.  Working from the right, we set \begin{math}u^r = s_{i_j},\end{math} where $j$ is the largest index such that \begin{math}s_{i_j} \leq_{(l)} u.\end{math}  We then take $u^{r-1}$ to be the next rightmost factor of $\mf{w}$ such that \begin{math}u^{r-1}u^r \leq_{(l)} u\end{math} and so on, until we have \begin{math} u^1u^2\cdots u^r = u\end{math}.  

\subsubsection{Deodhar's decomposition}

To construct a Deodhar decomposition of $G/B$, we first fix a reduced word $\mf{w}$ for each $w \in W$.  There is a Deodhar component \begin{math}\mathcal{R}_{\mf{u},\mf{w}}\end{math} for each distinguished subexpression \begin{math}\mf{u} \preceq \mf{w}.\end{math}  For a reduced word $\mf{w}$ of $w$, we have
\begin{equation}\mathring{R}_{u,w} = \bigsqcup_{\mf{u} \preceq \mf{w} \text{ distinguished}}\mathcal{R}_{\mf{u},\mf{w}}.\end{equation}

Marsh and Rietsch gave explicit parametrizations for the components $\mathcal{R}_{\mf{u},\mf{w}}$, which we call \emph{MR parametrizations}.  Rather than using Deodhar's original construction, we define the \emph{Deodhar component} $\mathcal{R}_{\mf{u},\mf{w}}$ to be the image of the parametrization corresponding to $\mf{u} \preceq \mf{w}$, as defined in \citep{MR04}.  We give the parametrizations explicitly in Section \ref{params}.

\begin{rmk}\label{changeterms} In \cite{Kar14}, we called MR parametrizations ``Deodhar parametrizations,'' since they parametrize Deodhar components.  In this paper, we use ``MR parametrizations'' for clarity of attribution, and to keep our terminology consistent with \cite{TW13}.
\end{rmk}

Fix a parabolic subgroup $P$ of $G$ which contains $B$.  If $u,w \in W$ with $u \leq_P w,$ then $\mathring{R}_{u,w} \subseteq G/B$ projects isomorphically onto its image in $G/P$, which is one of Lusztig's projected Richardson strata.  To define a Deodhar decomposition of $G/P$, choose a representative $[u,w]_P$ for each \begin{math} \langle u',w' \rangle_P \in \mathcal{Q}(W,W_P),\end{math} and a reduced word $\mf{w}$ for each chosen $w$.  
For each of the selected $u \leq_P w$ and each distinguished subexpression $\mf{u} \preceq \mf{w}$ where $\mf{w}$ is the selected word for $w$, the \emph{Deodhar component} of $G/P$ corresponding to \begin{math}\mf{u} \preceq \mf{w}\end{math} is given by 
\begin{equation} \mathcal{D}_{\mf{u},\mf{w}} := \pi_P(\mathcal{R}_{\mf{u},\mf{w}}).\end{equation}
Since $\pi_P$ is an isomorphism on $\mathring{R}_{u,w}$, composing Marsh and Rietsch's parametrization of $\mathcal{R}_{\mf{u},\mf{w}}$ with $\pi_P$ gives a parametrization of $\mathcal{D}_{\mf{u},\mf{w}}$.  

\subsubsection{Total nonnegativity.}  Let $\mf{w}$ be a reduced word of $w$, let $u \leq_P w$, and let $\mf{u} \preceq \mf{w}$ be the unique PDS for $u$ in $\mf{w}$.  Then $\mathcal{R}_{\mf{u},\mf{w}}$ is the unique top-dimensional Deodhar component of $\mathring{R}_{u,w}$, and $\mathcal{R}_{\mf{u},\mf{w}}$ is dense $\mathring{R}_{u,w}$ \citep{MR04}.  Hence the corresponding parametrization of $\mathcal{D}_{\mf{u},\mf{w}}$ gives a birational map to $\mathring{\Pi}_{u,w}$ which is an isomorphism onto its image.  We call this an \emph{MR parametrization} of $\mathring{\Pi}_{u,w}$.

The \emph{totally nonnegative part} $(\mathring{R}_{u,w})_{\geq 0}$ of the projected Richardson variety $\mathring{R}_{u,w}$ is the subset of $\mathcal{R}_{\mf{u},\mf{w}}$ where all parameters in the MR parametrization are positive real.  Projecting to $G/P$, we obtain a parametrization of the totally nonnegative part of $\mathring{\Pi}_{u,w}$.  Hence $(\mathring{\Pi}_{u,w})_{\geq 0}$ is the part of $\mathcal{D}_{\mf{u},\mf{w}}$ where all parameters are positive real.  Note that the image is independent of our choice of $\mf{w}$ \citep{MR04}.  Remarkably, when $G/P = \Gr(k,n)$, this is precisely the locus where all Pl\"{u}cker coordinates are nonnegative real, up to multiplication by a common scalar.  We show in Section \ref{positivity} that a similar statement holds for $\Lm(2n)$.

\subsubsection{Parametrizing Deodhar components}

\label{params}

We now review Marsh and Rietsch's parametrizations for Deodhar components.  Let $G/B$ be a flag variety, and let 
\[\Pi=\{\alpha_i \mid i \in I\}\]
be a choice of simple roots for the root system of $G$.  For each $\alpha_i \in \Pi,$ there is a group homomorphism $\varphi_i:\Sl_2 \rightarrow G,$
which yields $1$-parameter subgroups 

\[x_i(m) = \varphi_i
\begin{pmatrix}
1 & m \\
0 & 1 
\end{pmatrix}
\qquad
y_i(m) = \varphi_i
\begin{pmatrix}
1 & 0 \\
m & 1 
\end{pmatrix}
\qquad
\alpha_i^{\vee}(t) = \varphi_i
\begin{pmatrix}
t & 0 \\
0 & t^{-1}
\end{pmatrix}
\]

Such a choice of simple roots and of homomorphisms $\varphi_i$ is called a \emph{pinning}.
For each simple reflection $s_i$ of $W$, we define a corresponding element of $G/B$ by 
\[\dot{s_i} = \varphi_i
\begin{pmatrix}
0 & -1 \\
1 & 0 
\end{pmatrix}\]

If $w = s_{i_1}\cdots s_{i_m}$ is a reduced word, define $\dot{w} = \dot{s}_{i_1}\cdots \dot{s}_{i_m}$.  This is independent of the choice of reduced expression \citep{MR04}.  For each root $\alpha$, there is some simple root $\alpha_i$ and $w \in W$ such that $w\alpha_i = \alpha$.  The \emph{root subgroup} corresponding to $\alpha$ is given by $\{\dot{w}x_i(t)\dot{w}^{-1}  \mid t \in \mathbb{C}\}.$

Marsh and Rietsch give explicit parametrizations for Deodhar components, using products of the elements $x_i$, $y_i$, and $\dot{s_i}$ \citep{MR04}.
For $\mf{u} \preceq \mf{w}$ a subexpression, with $\mf{u} = s_{i_1}\cdots s_{i_m}$, they define
\[J_{\mf{u}}^+ = \{k \in \{1,\ldots,m\} \mid u_{(k-1)}<u_{(k)}\}\]
\[J_{\mf{u}}^{\circ} = \{k \in \{1,\ldots,m\} \mid u_{(k-1)} = u_{(k)}\}\]
\[J_{\mf{u}}^{-} = \{k \in \{1,\ldots,m\} \mid u_{(k-1)} > u_{(k)}\}\]

We assign an element of $G$ to each simple reflection $s_{i_k}$ of $\mathbf{w}$ according to the rule below, where $t_k$ and $m_k$ are parameters, the parameter $t_k$ takes values in $\mathbb{C}^{\times}$, and $m_k$ takes values on $\mathbb{C}$, by:
\[g_k =
\begin{cases}
x_{i_k}(m_k)\dot{s}_{i_k}^{-1} & k \in J_{\mf{u}}^-\\
y_{i_k}(t_k) & k \in J_{\mf{u}}^{\circ}\\
\dot{s}_{i_k}& k \in J_{\mf{u}}^+
\end{cases}
\]
Note that if $\mf{u}$ is a PDS of $\mf{w}$, the third case never occurs.
We define a subset of $G$ corresponding to $\mf{u} \preceq \mf{w}$ by setting
\begin{equation}G_{\mf{u},\mf{w}} = \{ g_1\ldots g_n \mid t_k \in \mathbb{C}^{\times},\,m_k \in \mathbb{C}\} \label{pars}\end{equation}

\begin{prop}
\emph{\citep[Proposition 5.2]{MR04}}
The map \[(\mathbb{C}^*)^{J_{\mf{u}}^{\circ}} \times \mathbb{C}^{J_{\mf{u}}^-} \rightarrow G_{\mf{u},\mf{w}}\]
from \eqref{pars} is an isomorphism. 
Projecting to $G/B$ gives an isomorphism
\[G_{\mf{u},\mf{w}} \rightarrow \mathcal{R}_{\mf{u},\mf{w}}.\]
\end{prop}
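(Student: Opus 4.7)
The plan is to prove both isomorphisms by induction on the length $m$ of $\mf{w}$, tracking the Bruhat cell that contains each partial product $g_{(k)} := g_1 g_2 \cdots g_k$ and then inverting the parametrization via uniqueness of Bruhat-type factorizations. Since $G_{\mf{u},\mf{w}}$ and $\mathcal{R}_{\mf{u},\mf{w}}$ are defined as the respective images, surjectivity is automatic in both parts and only injectivity, together with regularity of the inverse, needs to be verified.

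First I would establish the following key claim by induction on $k$: the partial product $g_{(k)}$ lies in the Bruhat cell $B_- \dot{u}_{(k)} B_+$. The base case $g_{(0)} = 1 \in B_-B_+$ is immediate. The inductive step splits into three cases according to whether $k \in J_{\mf{u}}^+$, $J_{\mf{u}}^\circ$, or $J_{\mf{u}}^-$, which are exactly the three possibilities permitted by the distinguished relation $u_{(k)} \leq u_{(k-1)}s_{i_k}$. In each case, standard commutation between $B_-$, $B_+$, and the factor $\dot{s}_{i_k}$, $y_{i_k}(t_k)$, or $x_{i_k}(m_k)\dot{s}_{i_k}^{-1}$ places the product $g_{(k)} = g_{(k-1)}g_k$ in the claimed cell.

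Next I would invert the parametrization. Given $g \in G_{\mf{u},\mf{w}}$, the stratum claim gives $g \in B_-\dot{u}B_+$ while $g \cdot g_m^{-1} \in B_-\dot{u}_{(m-1)}B_+$; comparing the Bruhat decompositions of the two elements (each unique within its cell) determines the single parameter $t_m$ or $m_m$ appearing in $g_m$. Replacing $g$ by $g \cdot g_m^{-1}$ reduces to a word of length $m-1$, and one iterates. This produces an inverse whose output parameters are rational functions of matrix entries, hence regular on $G_{\mf{u},\mf{w}}$.

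Finally, for the descent to $G/B$, observe that every parameter above was read off the $U_- \dot{u}_{(k)}$ piece of a Bruhat decomposition, which is invariant under right multiplication by $B_+$. The extraction therefore factors through $G/B$, showing that $G_{\mf{u},\mf{w}} \to \mathcal{R}_{\mf{u},\mf{w}}$ is bijective and, being algebraic in both directions, an isomorphism of varieties. The main obstacle is the $J_{\mf{u}}^-$ step of the stratum claim, where one must rewrite $x_{i_k}(m_k)\dot{s}_{i_k}^{-1}$ using a pinning identity such as $x_i(m)\dot{s}_i^{-1} = y_i(m^{-1})\alpha_i^\vee(m^{-1})\dot{s}_i^{-1}y_i(-m^{-1})$ (for $m \neq 0$) in order to expose both the $U_-$ contribution and the $B_+$ remainder before Bruhat uniqueness can be applied.
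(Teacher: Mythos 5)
The paper does not prove this statement; it is quoted directly from \citep{MR04}, so your sketch has to stand on its own as a reconstruction of the Marsh--Rietsch argument. Your overall architecture is the right one (track the Birkhoff stratum of each partial product $g_{(k)}=g_1\cdots g_k$, then invert by peeling off the last factor), but the inductive step of your key claim has a genuine gap: the implication ``$g_{(k-1)}\in B_-\dot u_{(k-1)}B_+$ implies $g_{(k)}=g_{(k-1)}g_k\in B_-\dot u_{(k)}B_+$'' is false, so it cannot follow from ``standard commutation.'' Already in $\Sl_2$: if $g_{(k-1)}=x_{i_k}(m)$ with $m\neq 0$, which lies in the big stratum $B_-\cdot 1\cdot B_+$, then $g_{(k-1)}\,y_{i_k}(t)$ has vanishing upper-left $2\times 2$-block entry exactly when $1+mt=0$, hence lands in $B_-\dot s_{i_k}B_+$ rather than staying in $B_-B_+$ (the $J^{\circ}$ case); similarly $x_{i_k}(m)\dot s_{i_k}\in y_{i_k}(1/m)B_+\subseteq B_-B_+$, so right multiplication by $\dot s_{i_k}$ need not move you up to $B_-\dot s_{i_k}B_+$ (the $J^+$ case). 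Knowing only which Birkhoff stratum $g_{(k-1)}$ lies in is therefore not a strong enough inductive hypothesis; the whole content of the proof is to carry a normal form that forbids such configurations.

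The repair, which is what \citep{MR04} actually prove, is to strengthen the hypothesis to something like $g_{(k)}\in U_-\,\dot u_{(k)}\,\bigl(U_+\cap\dot w_{(k)}^{-1}U_-\dot w_{(k)}\bigr)$, where $w_{(k)}=s_{i_1}\cdots s_{i_k}$ and $U_{\pm}$ are the unipotent radicals of $B_{\pm}$. The point is that the $U_+$-remainder then has components only in root subgroups $U_{\alpha}$ with $\alpha>0$ and $w_{(k)}\alpha<0$; since $\mf{w}$ is reduced, $w_{(k-1)}(\alpha_{i_k})>0$, so the remainder of $g_{(k-1)}$ has no $x_{i_k}$-component and the bad cases above cannot occur. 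This is also what your inversion step needs: a factorization $g=b_-\dot ub_+$ within $B_-\dot uB_+$ is \emph{not} unique ($b_-$ and $b_+$ can be altered by the torus and by $B_-\cap\dot uB_+\dot u^{-1}$), so ``comparing the Bruhat decompositions, each unique within its cell'' does not by itself pin down $t_m$ or $m_m$; uniqueness holds only for the refined normal form, and extracting the last parameter from it is the technical heart of the Marsh--Rietsch proof. With that strengthening, your outline (including the pinning identity you flag for the $J^-_{\mf{u}}$ case) does go through.
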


\subsubsection{MR parametrizations in Type $A$.}
We now give explicit parametrizations for Deodhar components in $\mathcal{F}\ell(n)$.  Our discussion follows \citep[Section 4]{TW13}, which in turn draws on \citep[Sections 1 and 3]{MR04}.  However, we use a slightly different set of conventions for our matrix representatives.

Let $E_{i,j}(t)$ denote the elementary matrix with a nonzero entry $t$ in position $(i,j)$, $1$'s along the main diagonal, and $0$'s everywhere else.  Then we have
$x_i^A(t) = E_{i,i+1}(t)$, while $y_i^A(t) = E_{i+1,i}(t)$.
The matrix $\dot{s}_i^A$ is obtained from the $n \times n$ identity by replacing the $2 \times 2$ block whose upper left corner is at position $(i,i)$ with the block matrix
\begin{equation}\begin{bmatrix}
0 & -1\\
1 & 0
\end{bmatrix}\end{equation}

For $\alpha = \epsilon_i - \epsilon_j$ with $i < j$, the corresponding root subgroup consists of matrices $x_{\alpha}^A(t)=E_{i,j}(t)$, while the root subgroup corresponding to $-\alpha$ is given by the matrices $y_{\alpha}^A(t)=E_{j,i}(t)$.  We have
\[\dot{s}_i^Ax_{\alpha}^A(t)(\dot{s}_i^A)^{-1} = 
\begin{cases}
x_{s_i\alpha}(\pm t)& \text{ if $s_i\alpha$ is a positive root}\\
y_{(-s_i\alpha)}(\pm t) & \text{ if $s_i\alpha$ is a negative root}
\end{cases}
\]

\[\dot{s}_i^Ay_{\alpha}^A(t)(\dot{s}_i^A)^{-1} = 
\begin{cases}
x_{s_i\alpha}(\pm t)& \text{ if $s_i\alpha$ is a positive root}\\
y_{(-s_i\alpha)}(\pm t) & \text{ if $s_i\alpha$ is a negative root}
\end{cases}
\]
where the sign in each case depends on $i$ and $\alpha$.

For convenience, we let $x_{(i,j)}^A$ denote $x_{\alpha}^A$ with $\alpha = \epsilon_i - \epsilon_j$, and define $y_{(i,j)}^A$ similarly.
Suppose now that $\mf{u} \preceq \mf{w}$ is the unique PDS for $u$ in $w$, where $\ell(w) = m.$  For $1 \leq j \leq m,$ let 
\begin{equation}\dot{u_j}=\begin{cases}
\dot{s}^A_{i_j} & s_{i_j}^A \in \mf{u}\\
1 & \text{otherwise}
\end{cases}\end{equation}
and let $j_1,\ldots,j_d$ be the indices of the simple transpositions of $\mf{w}$ which do not appear in $\mf{u}$.  
 
For $1 \leq r \leq d$, let
\begin{equation}\beta_{r} = (\dot{u_1}\cdots \dot{u}_{j_r-1}) (y^A_{i_{j_r}}(t_{j_r}))(\dot{u}_{j_r-1}^{-1} \cdots \dot{u_1}^{-1}).\end{equation}
Then we can rewrite each \begin{math} g \in G_{\mf{u},\mf{w}}\end{math} in the form
\begin{equation}\label{betas}g = (\beta_1\beta_2 \cdots \beta_d)(\dot{u}_1\dot{u}_2\cdots \dot{u}_m) \end{equation}

Again by direct calculation, we have the following lemma.

\begin{lem}
Suppose for $a < b \in [n]$, we have \begin{math} s_i(a) < s_i(b).\end{math}  Then
\begin{equation} \label{signs} \dot{s_i}y^A_{(a,b)}(t)\dot{s_i}^{-1}=
\begin{cases}
y^A_{(s_i(a),s_i(b))}(-t)& i \in \{a-1,b-1\}\\
y^A_{(s_i(a),s_i(b))}(t)& \text{otherwise}
\end{cases}
\end{equation}
\end{lem}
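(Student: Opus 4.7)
The plan is to reduce the statement to a direct matrix computation, since $y^A_{(a,b)}(t)=I+tE_{b,a}=I+t\,e_be_a^T$ and the identity term is fixed under conjugation. So everything comes down to computing $\dot{s}_i^A E_{b,a}(\dot{s}_i^A)^{-1}$ and tracking a single sign.

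First I would observe that $\dot{s}_i^A$ is orthogonal: outside rows and columns $i,i+1$ it agrees with the identity, while the $2\times 2$ block $\begin{pmatrix}0 & -1\\ 1 & 0\end{pmatrix}$ is orthogonal. Thus $(\dot{s}_i^A)^{-1}=(\dot{s}_i^A)^T$, and using $(\dot{s}_i^Ae_be_a^T\dot{s}_i^{A\,T})=(\dot{s}_i^Ae_b)(\dot{s}_i^Ae_a)^T$ I obtain
\[
\dot{s}_i^A\,E_{b,a}\,(\dot{s}_i^A)^{-1}=(\dot{s}_i^Ae_b)(\dot{s}_i^Ae_a)^T.
\]

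Next I would read off from the block description that $\dot{s}_i^Ae_i=e_{i+1}$, $\dot{s}_i^Ae_{i+1}=-e_i$, and $\dot{s}_i^Ae_j=e_j$ for $j\notin\{i,i+1\}$. In every case $\dot{s}_i^Ae_j=\eta_j\,e_{s_i(j)}$ with $\eta_j=-1$ exactly when $j=i+1$, and $\eta_j=+1$ otherwise. Substituting gives
\[
\dot{s}_i^A\,E_{b,a}\,(\dot{s}_i^A)^{-1}=\eta_a\eta_b\,E_{s_i(b),\,s_i(a)},
\]
so, using the hypothesis $s_i(a)<s_i(b)$ to identify the right-hand side with the defining matrix of the positive-root unipotent $y^A_{(s_i(a),s_i(b))}$, I conclude
\[
\dot{s}_i^A\,y^A_{(a,b)}(t)\,(\dot{s}_i^A)^{-1}=y^A_{(s_i(a),s_i(b))}(\eta_a\eta_b\,t).
\]

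Finally I would check the sign: since $a\neq b$, at most one of $a,b$ can equal $i+1$, so $\eta_a\eta_b=-1$ precisely when exactly one of $a=i+1$ or $b=i+1$ holds, i.e.\ precisely when $i\in\{a-1,b-1\}$. Because the whole argument is a routine matrix calculation, there is no real obstacle; the only delicate point is tracking the single $-1$ that enters through the convention $\dot{s}_i^Ae_{i+1}=-e_i$, and verifying that the hypothesis $s_i(a)<s_i(b)$ excludes the case $(a,b)=(i,i+1)$ so that the output does lie in the positive root subgroup $y^A_{(s_i(a),s_i(b))}$.
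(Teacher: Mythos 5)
Your proof is correct and takes essentially the same approach as the paper, which simply asserts the lemma follows ``by direct calculation'' without writing out the details. Your computation---using orthogonality of $\dot{s}_i^A$, the rank-one form $e_be_a^T$, and tracking the single sign $\eta_{i+1}=-1$---is a clean way to carry out that calculation, and you correctly note that the hypothesis $s_i(a)<s_i(b)$ rules out the pair $(a,b)=(i,i+1)$, which is the only case where the conjugate would land outside a lower-triangular root subgroup.
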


This lemma implies the following.  Details may be found in \citep{Kar14}. 
 \begin{lem}
 For $\beta_r$ as above, define:
 \begin{align} a_r &= u_{(j_r-1)}(i_{j_r})\\
 b_r &= u_{(j_r-1)}(i_{j_r}+1)\\
 \theta_r &= |u([k]) \cap [a+1,b-1])|
 \end{align}
 Then we have
 \begin{equation} \beta_{r} = y^A_{(a,b)}((-1)^{\theta_r} t_{j_r}).\end{equation}
 \end{lem}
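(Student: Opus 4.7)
The plan is to prove the lemma by a direct inductive unraveling of the conjugation that defines $\beta_r$: apply the preceding lemma one simple reflection at a time, and then identify the accumulated sign combinatorially.

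Starting from the innermost factor $y^A_{i_{j_r}}(t_{j_r}) = y^A_{(i_{j_r},i_{j_r}+1)}(t_{j_r})$, I conjugate successively by $\dot{u}_{j_r-1}, \dot{u}_{j_r-2}, \ldots, \dot{u}_1$. When $\dot{u}_\ell = 1$ the conjugation is trivial, and the running pair $(a,b)$ is unchanged. When $\dot{u}_\ell = \dot{s}_{i_\ell}$ the preceding lemma applies: the pair transforms to $(s_{i_\ell}(a), s_{i_\ell}(b))$, and the parameter picks up a factor of $-1$ exactly when $i_\ell \in \{a-1, b-1\}$. In order to invoke the preceding lemma at each intermediate step I need the hypothesis $s_{i_\ell}(a) < s_{i_\ell}(b)$; this follows from the PDS property of $\mf{u}$, which guarantees that the partial products $u_{(\ell)}$ never invert the image of $(i_{j_r}, i_{j_r}+1)$, so no crossing of the endpoints can occur along the way.

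After all $j_r - 1$ conjugations, the pair has become $(u_{(j_r-1)}(i_{j_r}), u_{(j_r-1)}(i_{j_r}+1)) = (a_r, b_r)$, with cumulative sign $(-1)^{\theta_r}$, where $\theta_r$ records the total number of sign flips encountered. The remaining task is to match this running count with the closed-form expression $|u([k]) \cap [a_r+1, b_r-1]|$. A sign flip at step $\ell$ corresponds to $s_{i_\ell}$ moving either the current lower endpoint or the current upper endpoint down by one; tracking the trajectories of both endpoints, each such event corresponds bijectively to an index $j \leq k$ whose image $u(j)$ lies strictly between $a_r$ and $b_r$.

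The main obstacle is this last combinatorial identification. The inductive sign-tracking in the previous paragraph is mechanical, but turning the running tally of flips into the closed-form $|u([k]) \cap [a_r+1, b_r-1]|$ requires careful bookkeeping on the simultaneous evolution of both endpoints under successive transpositions, and on how a simple reflection $s_{i_\ell}$ inside $\mf{u}$ can be matched with an index $j \leq k$ whose $u$-image lands inside the final interval $[a_r+1, b_r-1]$. Since the full verification is carried out in detail in \citep{Kar14}, I will either reproduce the key bijective correspondence between sign-flipping steps $\ell$ and elements of $u([k]) \cap [a_r+1, b_r-1]$, or refer the reader to that reference for the complete argument.
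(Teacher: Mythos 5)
Your proposal follows the same route the paper intends: iterate the preceding conjugation lemma $\dot{s_i}y^A_{(a,b)}(t)\dot{s_i}^{-1} = y^A_{(s_i(a),s_i(b))}(\pm t)$ one factor of $\dot{u}_1\cdots\dot{u}_{j_r-1}$ at a time, accumulate the signs, and identify the total with $|u([k]) \cap [a_r+1,b_r-1]|$; the paper itself gives no further argument and, like you, defers the detailed sign bookkeeping to \citep{Kar14}. This is essentially the same approach, carried out to at least the same level of detail as the paper's own treatment.
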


For notational convenience, we renumber our parameters $t_{i_j}$ and define $a_r,b_r$ such that 
\begin{equation} \beta_r = y^A_{(a_r,b_r)}(\pm t_r)\end{equation}
where the sign is determined as above.

\subsubsection{MR parametrizations in type $C$}

We now give a pinning for $\Sp(n)$.  Again, we follow \citep[Chapter 3]{BL00}, but with some sign changes to account for our choice of symplectic form.  Then we have 
\[x_i^C(t) = \begin{cases}
x^A_{i}(t)x_{2n-i}^A(t) & 1 \leq i \leq n-1\\
x_{n}^A(t) & i = n
\end{cases}\]
\[y_i^C(t) = \begin{cases}
y^A_{i}(t)y_{2n-i}^A(t) & 1 \leq i \leq n-1\\
y_{n}^A(t)  & i = n
\end{cases}\]
Similarly, the Weyl group elements are $\dot{s}^C_i$ given by
\[\dot{s}_i^C = \begin{cases}
\dot{s}_i^A\dot{s}_{2n-i}^A & 1 \leq i \leq n-1\\
\dot{s}_n^A  & i = n
\end{cases}\]

We now relate MR parametrizations of $\Lm(2n)$ to MR parametrizations of $\Gr(n,2n)$.  It is not hard to check that the embedding $S_n^C \hookrightarrow S_{2n}$ carries distinguished subexpressions $S_n^C$ to distinguished subexpressions in $S_{2n}$.

Let $\widetilde{\mf{v}} \preceq \widetilde{\mf{w}}$ be a distinguished subexpression in $S_n^C$ and let $\mf{v} \preceq \mf{w}$ be the corresponding distinguished subexpression in $S_{2n}$.  Let $\mathcal{R}^C_{\widetilde{\mf{u}},\widetilde{\mf{w}}}$ be the Deodhar component of $\Sp(2n)/B_+^{\sigma}$ corresponding to $\widetilde{\mf{u}} \preceq \widetilde{\mf{w}}$, and let $\mathcal{R}^A_{\mf{u},\mf{w}}$ be the component of $\mathcal{F}\ell(2n)$ corresponding to $\mf{u} \preceq \mf{w}$. 

\begin{lem}\label{deoAC} We have $\mathcal{R}^C_{\widetilde{\mf{u}},\widetilde{\mf{w}}} \hookrightarrow \mathcal{R}_{\mf{w},\mf{u}}$ under the map $\Sp(2n)/B_+^{\sigma} \hookrightarrow \mathcal{F}\ell(n)$.
\end{lem}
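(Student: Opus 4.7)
The plan is to argue directly by expanding the MR parametrization in type $C$ using the pinning identities $x^C_i(t) = x^A_i(t)x^A_{2n-i}(t)$, $y^C_i(t) = y^A_i(t)y^A_{2n-i}(t)$, $\dot{s}^C_i = \dot{s}^A_i \dot{s}^A_{2n-i}$ for $i < n$ (and the obvious equality for $i = n$), and checking that the resulting product lies in $G_{\mf{u},\mf{w}}$ as defined in \eqref{pars}. Since $G_{\mf{u},\mf{w}}$ projects to $\mathcal{R}^A_{\mf{u},\mf{w}}$, this will give the desired inclusion.

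First I would spell out the correspondence of reduced words. Fix a reduced expression $\widetilde{\mf{w}} = s^C_{i_1} \cdots s^C_{i_m}$. Under the Bruhat embedding $S_n^C \hookrightarrow S_{2n}$, this produces a reduced expression $\mf{w}$ in $S_{2n}$ obtained by replacing each factor $s^C_{i_k}$ with $s^A_{i_k}$ when $i_k = n$ and with the commuting pair $s^A_{i_k} s^A_{2n-i_k}$ when $i_k < n$. A subexpression $\widetilde{\mf{u}} \preceq \widetilde{\mf{w}}$ transfers to the symmetric subexpression $\mf{u} \preceq \mf{w}$ in which, for each paired slot $(2k{-}1, 2k)$ coming from a single $s^C_{i_k}$ with $i_k < n$, both factors are present or both absent. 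Because $S_n^C \hookrightarrow S_{2n}$ is a Bruhat embedding and the type $A$ reflections $s^A_{i_k}, s^A_{2n-i_k}$ commute and move length additively by $2$, the class $J^+_{\widetilde{\mf{u}}}, J^\circ_{\widetilde{\mf{u}}}, J^-_{\widetilde{\mf{u}}}$ to which a position $k$ belongs determines, unambiguously, the class of both paired positions in $\mf{u} \preceq \mf{w}$. In particular $\widetilde{\mf{u}}$ is distinguished if and only if the corresponding $\mf{u}$ is.

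Next, write a typical element of $\mathcal{R}^C_{\widetilde{\mf{u}},\widetilde{\mf{w}}}$ as $g = \widetilde{g}_1 \cdots \widetilde{g}_m$ following \eqref{pars}, where each $\widetilde{g}_k$ is one of $y^C_{i_k}(t_k)$, $\dot{s}^C_{i_k}$, or $x^C_{i_k}(m_k)(\dot{s}^C_{i_k})^{-1}$ according to the class of $k$. Expanding via the pinning, each $\widetilde{g}_k$ with $i_k < n$ becomes the product of two commuting type $A$ factors of the same class, sharing the parameter $t_k$ (or $m_k$), while the $i_k = n$ case produces a single type $A$ factor. Viewed inside $\Sl(2n)$, the product $g$ is therefore of the form $g_1 g_2 \cdots g_{\ell(\mf{w})}$ prescribed by the type $A$ parametrization of $G_{\mf{u},\mf{w}}$, subject only to the additional constraint that paired positions share a parameter. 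Projecting to $\mathcal{F}\ell(2n)$ and recalling that $\mathcal{R}^A_{\mf{u},\mf{w}}$ is the image of $G_{\mf{u},\mf{w}}$, we conclude $g \in \mathcal{R}^A_{\mf{u},\mf{w}}$, which is exactly the embedding claimed.

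The main obstacle is the bookkeeping in the second paragraph: one has to be careful that the symmetry of $\widetilde{\mf{u}}$ forces both type $A$ positions in a pair into the same class, and that the relative order of $s^A_{i_k}$ and $s^A_{2n-i_k}$ chosen in $\mf{w}$ is immaterial (which holds because they commute). Once this is in place, the expansion is mechanical, and the fact that the image only hits a subvariety of $\mathcal{R}^A_{\mf{u},\mf{w}}$ (since paired parameters agree) is consistent with $\hookrightarrow$ being an inclusion, not an equality.
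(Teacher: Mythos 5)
Your proof is correct and follows essentially the same route as the paper: expand the type $C$ Marsh--Rietsch parametrization factor-by-factor using the pinning identities, observe that the result is a type $A$ parametrized product with paired parameters identified, and project. The paper relegates the subexpression bookkeeping to the remark preceding the lemma (``it is not hard to check that the embedding $S_n^C\hookrightarrow S_{2n}$ carries distinguished subexpressions to distinguished subexpressions''); you spell this out, and your key observation — that for $\tau\in S_n^C$ the two commuting reflections $s^A_{i_k}$ and $s^A_{2n-i_k}$ change $\ell^A(\tau)$ in the same direction, so each paired position inherits a single well-defined class $J^+$, $J^\circ$, $J^-$ — is exactly the verification needed. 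The only place I would tighten the wording is the phrase ``move length additively by $2$'': the precise fact you are using is that $\iota(\tau)s^A_{i_k}>\iota(\tau)$ if and only if $\iota(\tau)s^A_{2n-i_k}>\iota(\tau)$ (a direct consequence of the symmetry $\tau(2n+1-a)=2n+1-\tau(a)$), from which the $\pm 2$ length jump follows; stating it that way makes the unambiguity of the paired classes immediate rather than heuristic.
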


\begin{proof}
Consider the parametrization of $\mathcal{R}^C_{\widetilde{\mf{u}},\widetilde{\mf{w}}}$ described above.  We expand the formula for each element of $G^C_{\widetilde{\mf{u}},\widetilde{\mf{w}}}$ as a product of matrices $x_{i_r}^A(t_r)$, $y_{i_r}^A(t_r)$, and $\dot{s}_{i_r}^A$. 
The resulting sequence of matrices in $\Sl(n)$ is identical to the sequence of matrices corresponding to $\mf{u} \preceq \mf{w}$, except that the $t_i$ satisfy some relations of the form $t_i = t_{i+1}$.  Hence, $G^C_{\widetilde{\mf{u}},\widetilde{\mf{w}}}$ corresponds to a locally closed subset of $\mathcal{R}^A_{\mf{u},\mf{w}}$ and the claim follows.
\end{proof}

\subsection{Positroid varieties}

Let 
\begin{math}V \in \Gr_{\geq 0}(k,n)\end{math}.
The indices of the non-vanishing Pl{\"u}cker  coordinates of $V$ give a set
\begin{math} \mathcal{J} \subseteq {{[n]}\choose k}\end{math}
called the \emph{matroid} of $V$.  We define the \emph{matroid cell} 
\begin{math} \mathcal{M}_{\mathcal{J}}\end{math}
as the locus of points $V \in \Gr_{\geq 0}(k,n)$ with matroid $\mathcal{J}$.  The nonempty matroid cells in $\Gr_{\geq 0}(k,n)$ are the \emph{positroid cells} defined by Postnikov, and the corresponding matroids are called \emph{positroids}.  Positroid cells form a stratification of $\Gr_{\geq 0}(k,n)$, and each cell is homeomorphic to 
\begin{math}(\reals^+)^d\end{math} for some $d$ \citep[Theorem 3.5]{Pos06}.

The positroid stratification of $\Gr_{\geq 0}(k,n)$ extends to the complex Grassmannian $\Gr(k,n)$.  Taking the Zariski closure of a positroid cell of $\Gr_{\geq 0}(k,n)$ in $\Gr(k,n)$ gives a \emph{positroid variety}.  For a positroid variety
\begin{math} \Pi^A \subseteq \Gr(k,n),\end{math} we define the \emph{open positroid variety} 
\begin{math} \mathring{\Pi}^A \subset \Pi^A \end{math}
by taking the complement in $\Pi^A$ of all lower-dimensional positroid varieties.  The open positroid varieties give a stratification of $\Gr(k,n)$\citep{KLS13}.

Positroid varieties in $\Gr(k,n)$ may be defined in numerous other ways.  There is a beautiful description of positroid varieties as intersections of cyclically permuted Schubert varieties.  In particular, the positroid stratification refines the well-known Schubert stratification of $\Gr(k,n)$ \citep{KLS13}.  

Remarkably, positroid varieties in $\Gr(k,n)$ coincide with projected Richardson varieties \citep[Section 5.4]{KLS13}. Indeed, let \begin{math} u \leq_k w.\end{math}
The projection $\pi_k$ maps 
\begin{math} \mathring{R}^A_{u,w}\end{math}
homeomorphically onto its image, which is an open positroid variety 
\begin{math} \mathring{\Pi}^A_{u,w}.\end{math}  
The closure of $\mathring{\Pi}^A_{u,w}$ is a (closed) positroid variety $\Pi^A_{u,w}$ and we have $\pi_k(R^A_{u,w})=\Pi^A_{u,w}$.  Every positroid variety arises in this way.

Since positroid varieties are projected Richardson varieties, we have an isomorphism between $\mathcal{Q}(k,n)$ and the poset of positroid varieties, ordered by \emph{reverse} inclusion \citep[Section 5.4]{KLS13}.

\subsection{Grassmann necklaces}

Grassmann necklaces, first introduced in \citep{Pos06}, give another family of combinatorial objects which index positroid varieties.  

\begin{defn}A \emph{Grassmann necklace} $\mathcal{I} = (I_1,I_2,\ldots,I_n)$ of type $(k,n)$ is a sequence of $k$-element subsets of $[n]$ such that the following hold for all $i \in [n], $ with indices taken modulo $n$: 
\begin{enumerate}
\item If $i \in I_i$ then $I_{i+1} = (I_i \backslash \{i\}) \cup \{j\}$ for some $j \in [n].$
\item If $i \not\in I_i$, then $I_{i+1} = I_i$. 
\end{enumerate}
\end{defn}

For $a \in [n]$, let $\leq_a$ denote the cyclic shift of the usual linear order on $n$ given by 
\[a < a + 1 < \cdots < n < 1 < \cdots < a-1.\]
Note that $\leq_1$ is the usual order $\leq$.  We extend this to a partial order on ${{[n]}\choose{k}}$ by setting $I \leq_a J$ if we have $i_{\ell} \leq_a j_{\ell}$ for all $\ell \in [k]$, where 
\[I = \{i_1 <_a i_2 <_a \cdots <_a i_k\} \text{ and }J = \{j_1 <_a j_2 <_a \cdots <_a j_k\}.\]

Let $\mathcal{J}$ be a positroid of type $(k,n)$.  That is, $\mathcal{J}$ is the matroid of some nonempty positroid cell in $\Gr_{\geq 0}(k,n)$.  Then $\mathcal{J}$ is a collection of $k$-element subsets of $[n]$.  For each $1 \leq i \leq n$, let $I_i$ be the minimal element of $\mathcal{J}$ with respect to the shifted linear order $\leq_{i}$.  Then $\mathcal{I} = (I_1,\ldots,I_n)$ is a Grassmann necklace of type $(k,n)$. This procedure gives a bijection between Grassmann necklaces and positroids of type $(k,n)$ \citep{Pos06}. For the inverse bijection, let $\mathcal{I}=(I_1,\cdots,I_n)$ be a Grassmann necklace of type $(k,n)$, and let $\mathcal{J}$ be set of all $k$-element subsets $J \in {{[n]}\choose{k}}$ such that $I_a \leq_a J$ for all $a \in [n]$.  Then $\mathcal{J}$ is the positroid with Grassmann necklace $\mathcal{I}$ \citep{Pos06, Oh11}.  

Positroid varieties are not matroid varieties.  Suppose $X \in \Gr(k,n)$ is contained in the open positroid variety $\mathring{\Pi}$ with corresponding positroid $\mathcal{J}$.  The matroid of $X$ is contained in $\mathcal{J},$ but may not be equal to $\mathcal{J}$ \citep{KLS13}.  However, the two matroids are related.  Let $\mathcal{J}_X$ be the matroid of $X$, and let $(\mathcal{I}_1,\ldots,\mathcal{I}_n)$ be the Grassmann necklace of $\mathcal{J}$.  Then $I_i$ is the minimal element of $\mathcal{J}_X$ with respect to the shifted linear order $\leq_i$ for all $i \in [n]$ \citep{KLS14}.  

We define a partial order on Grassmann necklaces by setting $\mathcal{I} \leq \mathcal{I}'$ if $I_i \leq_i I_i'$ for all $i$. Hence the poset of Grassmann necklaces is isomorphic to the poset of positroids, ordered by \emph{reverse} containment.

\subsection{Bounded affine permutations and Bruhat intervals in type $A$}

\begin{defn} An \textbf{affine permutation} of order $n$ is a bijection 
\begin{math}f:\integers \rightarrow \integers\end{math}
which satisfies the condition 
\begin{equation} \label{affine} f(i+n) = f(i)+n\end{equation}
 for all 
 \begin{math} i \in \integers.\end{math}
 The affine permutations of order $n$ form a group, which we denote 
 \begin{math} \widetilde{S}_n.\end{math}
 \end{defn}

For a reductive group $G$, the \emph{extended affine Weyl group} $\widehat{W}$ of $G$ is defined by 
\[\widehat{W} = X_* \times W\]
where $W$ is the Weyl group of $G$ and $X_*$ is the cocharacter lattice of $G$ \citep{KR99}.  The group $\widetilde{S}_n$ is the extended affine Weyl group of $\Gl(n)$.  In particular, we have 
\[\widetilde{S}_n \cong \mathbb{Z}^n \rtimes S_n\]
where $\mathbb{Z}^n$ is the cocharacter lattice of $\Gl(n)$.  Each permutation $w$ acts periodically on $\mathbb{Z}$ with period $n$, while a \emph{translation element} $(a_1,\ldots,a_n) \in \mathbb{Z}^n$ acts by $i \mapsto i+a_in$, again extended periodically with period $n$.  As in \citep{HL11}, a cocharacter $\lm = (\lm_1,\ldots,\lm_n)$ of $\Gl(n)$ corresponds to the translation element of $\widetilde{S}_n$ is given by $(-\lm_1,\ldots,-\lm_n)$.
Note that we may realize both $\mathcal{F}\ell(n)$ and $\Gr(k,n)$ as quotients of $\Gl(n)$ rather than $\Sl(n)$.

\begin{defn}
For $k \in \mathbb{Z}$, an affine permutation of order $n$ has type $(k,n)$ if
\begin{equation}\displaystyle \frac{1}{n} \sum_{i=1}^n (f(i)-i)=k.\end{equation}
We denote the set of affine permutations of type $(k,n)$ by 
\begin{math}\widetilde{S}^k_n.\end{math}
\end{defn}

Affine permutations of type $(0,n)$ form an infinite Coxeter group, the \emph{affine symmetric group}. This is the \emph{affine Weyl group} of type $A_{n-1}$.  It has simple generators 
\begin{math}\widetilde{s}_1,\ldots,\widetilde{s}_{n},\end{math}
where $\widetilde{s}_i$ is the affine permutation which interchanges $i+rn$ and $i+1+rn$ for each 
\begin{math} r \in \integers.\end{math}

The Bruhat order on $\widetilde{S}_n^0$ extends to a Bruhat order on all of $\widetilde{S}_n$.  Each element of $\widetilde{S}_n$ may be written as a product $w\tau$, where $\tau$ preserves all simple roots of the affine Weyl group $\widetilde{S}_n^0$, and $w \in \widetilde{S}_n^0$. We say $w'\tau' \leq w\tau$ if $w' \leq w$ in Bruhat order, and $\tau' = \tau$.  
In our case, the condition $\tau = \tau'$ is equivalent to saying that $w'\tau'$ and $w\tau$ are both of type $(k,n)$ for some $k \in \mathbb{Z}$.  For $f = w \tau$, $f \in \widetilde{S}_n^k$ means that $\tau$ is the function $x \mapsto x + k$.

\begin{defn}
An affine permutation in 
\begin{math} \widetilde{S}_n\end{math}
 is \textbf{bounded} if it satisfies the condition
\begin{equation}i \leq f(i) \leq i+n \text{ for all }i \in \integers.\end{equation}
For $1 \leq k \leq n$, we write $\Bd(k,n)$ for the set of all bounded affine permutations of type $(k,n)$.  
\end{defn}

The set $\Bd(k,n)$ inherits the Bruhat order from 
\begin{math} \widetilde{S}^k_n.\end{math}  In fact, $\Bd(k,n)$ is a lower order ideal in 
\begin{math} \widetilde{S}^k_n\end{math}  
\cite[Lemma 3.6]{KLS13}.
Similarly, the length function $\ell$ on 
\begin{math} \widetilde{S}_n^0\end{math}
 induces a grading $\ell$ on $\Bd(k,n)$, defined by $\ell(w\tau) = \ell(w)$.  An \emph{inversion} of a bounded affine permutation $f$ is a pair $i < j$ such that 
\begin{math} f(i) > f(j).\end{math}
Two inversions $(i,j)$ and $(i',j')$ are equivalent if 
\begin{math} i'=i+rn\end{math}
and
\begin{math}j'=j+rn\end{math}
for some
\begin{math} r \in \integers.\end{math} 
We call the resulting equivalence classes \emph{type $\widetilde{A}$ inversions}. 
The length of a bounded affine permutation
\begin{math} f \in \Bd(k,n)\end{math}
is the number of type $\widetilde{A}$ inversions of $f$ \citep[Theorem 5.9]{KLS13}.

For 
\begin{math} J \in {{[n]}\choose{k}},\end{math}
 we define the \emph{translation element} 
 \begin{math} t_J \in \widetilde{S}_n\end{math} by setting 
\begin{equation}t_J(i)=\begin{cases}
i+n & i \in J\\
i & i \not\in J
\end{cases}\end{equation}
for 
\begin{math}1 \leq i \leq n,\end{math}
 and extending periodically.
Every 
\begin{math} f \in \Bd(k,n)\end{math} may be written in the form
\begin{equation}f=\sigma t_{\mu}=t_{\nu}\sigma\end{equation}
for some
\begin{math} \sigma \in S_n,\end{math}, elements
$\mu$ and $\nu$ of ${{[n]}\choose{k}},$
and $t_{\mu}$, $t_{\nu}$ translation elements.  Both factorizations are unique.

We now give an isomorphism between $\mathcal{Q}(k,n)$ and $\Bd(k,n)$.  This isomorphism may be viewed as a special case of Theorem 2.2 from \citep{HL11}.  Let $\langle u, w \rangle_k \in \Q(k,n)$.  
The function 
\begin{equation}\label{Q2Bd} f_{u,w} = ut_{[k]}w^{-1} \end{equation}
is a bounded affine permutation of type $(k,n)$.  If $[u',w']_k$ is any other representative of 
\begin{math} \langle u,w \rangle_k,\end{math}
then
\begin{math} f_{u',w'}=f_{u,w}.\end{math}
Hence we have a well-defined map 
\begin{math} \mathcal{Q}(k,n) \rightarrow \Bd(k,n),\end{math} which is in fact an isomorphism of posets \citep[Section 3.4]{KLS13}.
Note that for the translation elements $t_{w([k])}$ and $t_{u([k])},$ we have
\begin{equation} \label{permfactors} f_{u,w} = uw^{-1}t_{w([k])}=t_{u([k])}uw^{-1} \end{equation}
This follows easily from \eqref{Q2Bd}.

The poset of bounded affine permutations is anti-isomorphic to the poset of \emph{decorated permutations}, which Postnikov introduced to index positroid varieties \citep{Pos06}.  A decorated permutation of order $n$ is a permutation in $S_n$ with fixed points colored black or white.  If $f$ is a bounded affine permutation, and $\sigma$ is the corresponding decorated permutation, then black fixed points of $\sigma$ correspond to values $i \in [n]$ such that $f(i) =i$.  White fixed points correspond to values $i$ such that $f(i) = i + n$.  A decorated permutation $\sigma$ of order $n$ has type $(k,n)$ if we have
\[k = |\{i \in [n] \mid \sigma^{-1}(i) > i \text{ or $i$ is a white fixed point}\}|.\]

Postnikov represented decorated permutations visually using \emph{chord diagrams} \citep[Section 16]{Pos06}.  Let $\sigma$ be a decorated permutation.  A chord diagram for $\sigma$ is a circle with vertices labeled $1,2,\ldots,n$ in clockwise order.  We then draw arrows from vertex $i$ to vertex $\sigma(i)$ for all $i \in [n]$.  By convention, if $i$ is a white fixed point of $\sigma$, we draw a clockwise loop at $i$; if $i$ is a black fixed point, we draw a counter-clockwise loop.  

We say a pair of arrows in a chord diagram represents a \emph{crossing} if they are arranged in one of the configurations shown in Figure \ref{crossings}, for $i,j \in [n]$.  We say a pair of chords represents an \emph{alignment} if they are arranged in one of the configurations shown in Figure \ref{alignments}.  Note that rotating any of the diagrams shown in Figure \ref{alignments} gives a valid example of an alignment.  The analogous statement holds for crossings.

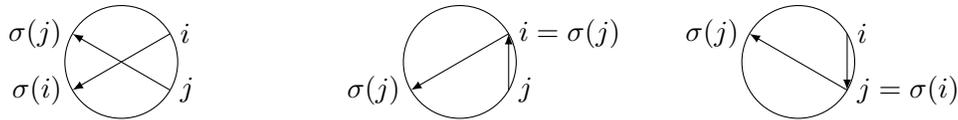
\begin{figure}[ht]
\centering
\begin{tikzpicture}[scale = 0.75]
\draw (0,0) circle (1);
\draw[->, >=latex] ({cos(30)},{sin(30)}) to (-{cos(30)},-{sin(30)});
\draw[->, >=latex] ({cos(-30)},{sin(-30)}) to (-{cos(-30)},-{sin(-30)});
\node[right] at ({cos(30)},{sin(30)}){$i$};
\node[left] at (-{cos(30)},{sin(30)}) {$\sigma(j)$};
\node[right] at ({cos(-30)},{sin(-30)}) {$j$};
\node[left] at (-{cos(-30)},{sin(-30)}) {$\sigma(i)$};
\begin{scope}[xshift = 6 cm]
\draw (0,0) circle (1);
\draw[->, >=latex] ({cos(30)},{sin(30)}) to (-{cos(30)},-{sin(30)});
\draw[->, >=latex] ({cos(-30)},{sin(-30)}) to ({cos(30)},{sin(30)});
\node[right] at ({cos(30)},{sin(30)}){$i = \sigma(j)$};
\node[right] at ({cos(-30)},{sin(-30)}) {$j$};
\node[left] at (-{cos(-30)},{sin(-30)}) {$\sigma(j)$};
\end{scope}
\begin{scope}[xshift = 12 cm]
\draw (0,0) circle (1);
\draw[->, >=latex] ({cos(30)},{sin(30)}) to ({cos(-30)},{sin(-30)});
\draw[->, >=latex] ({cos(-30)},{sin(-30)}) to (-{cos(-30)},-{sin(-30)});
\node[right] at ({cos(30)},{sin(30)}){$i$};
\node[left] at (-{cos(30)},{sin(30)}) {$\sigma(j)$};
\node[right] at ({cos(-30)},{sin(-30)}) {$j=\sigma(i)$};
\end{scope}
\end{tikzpicture}
\caption{Crossings in a chord diagram.}
\label{crossings}
\end{figure}

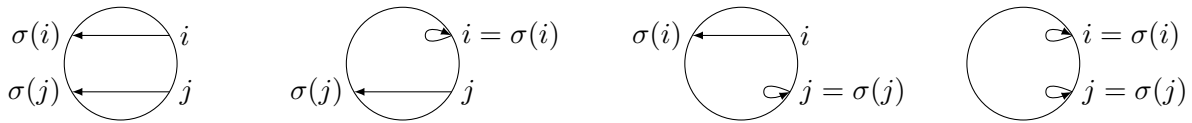
\begin{figure}[ht]
\centering
\begin{tikzpicture}[scale = 0.75]
\draw (0,0) circle (1);
\draw[->, >=latex] ({cos(30)},{sin(30)}) to (-{cos(30)},{sin(30)});
\draw[->, >=latex] ({cos(-30)},{sin(-30)}) to (-{cos(-30)},{sin(-30)});
\node[right] at ({cos(30)},{sin(30)}){$i$};
\node[left] at (-{cos(30)},{sin(30)}) {$\sigma(i)$};
\node[right] at ({cos(-30)},{sin(-30)}) {$j$};
\node[left] at (-{cos(-30)},{sin(-30)}) {$\sigma(j)$};
\begin{scope}[xshift = 5 cm]
\draw (0,0) circle (1);
\draw[->, >=latex] ({cos(30)},{sin(30)}) to [out = 205, in = 270] (0.4,{sin(30)}) to [out = 90, in = 155] (0.866,{sin(30)});
\draw[->, >=latex] ({cos(-30)},{sin(-30)}) to (-{cos(-30)},{sin(-30)});
\node[right] at ({cos(30)},{sin(30)}){$i = \sigma(i)$};
\node[right] at ({cos(-30)},{sin(-30)}) {$j$};
\node[left] at (-{cos(-30)},{sin(-30)}) {$\sigma(j)$};
\end{scope}
\begin{scope}[xshift = 11 cm]
\draw (0,0) circle (1);
\draw[->, >=latex] ({cos(30)},{sin(30)}) to (-{cos(30)},{sin(30)});
\draw[->, >=latex] ({cos(30)},-{sin(30)}) to [out = 155, in = 90] (0.4,-{sin(30)}) to [out = 270, in = 205] (0.866,-{sin(30)});
\node[right] at ({cos(30)},{sin(30)}){$i$};
\node[left] at (-{cos(30)},{sin(30)}) {$\sigma(i)$};
\node[right] at ({cos(-30)},{sin(-30)}) {$j=\sigma(j)$};
\end{scope}
\begin{scope}[xshift = 16 cm]
\draw (0,0) circle (1);
\draw[->, >=latex] ({cos(30)},{sin(30)}) to [out = 205, in = 270] (0.4,{sin(30)}) to [out = 90, in = 155] (0.866,{sin(30)});
\draw[->, >=latex] ({cos(30)},-{sin(30)}) to [out = 155, in = 90] (0.4,-{sin(30)}) to [out = 270, in = 205] (0.866,-{sin(30)});
\node[right] at ({cos(30)},{sin(30)}){$i = \sigma(i)$};
\node[right] at ({cos(-30)},{sin(-30)}) {$j=\sigma(j)$};
\end{scope}
\end{tikzpicture}
\caption{Alignments in a chord diagram.  Note that loops must be oriented as shown to give a valid alignment.}
\label{alignments}
\end{figure}

\subsection{Plabic graphs}

\label{plabic}

A \emph{plabic graph} is a planar graph embedded in a disk, with each vertex colored black or white.  (Plabic is short for planar bicolored.)  The boundary vertices are numbered $1,2,\ldots,n$ in clockwise order, and all boundary vertices have degree one. We call the edges adjacent to boundary vertices \emph{legs} of the graph.  A leaf adjacent to a boundary vertex is called a \emph{lollipop}.  A black leaf adjacent to a white boundary vertex is a \emph{black lollipop}, while a \emph{white lollipop} is the opposite.  We further assume that every vertex in a plabic graph is connected by some path to a boundary vertex.

Postnikov introduced plabic graphs in \citep[Section 11.5]{Pos06}, where he used them to construct parametrizations of positroid cells in the totally nonnegative Grassmannian.  In this paper, we follow the conventions of \citep{Lam13}, which are more restrictive than Postnikov's. In particular, we require our plabic graphs to be bipartite, with the black and white vertices forming the partite sets.  An \emph{almost perfect matching} on a plabic graph is a subset of its edges which uses each interior vertex exactly once; boundary vertices may or may not be used.  We consider only plabic graphs which admit an almost perfect matching.

We can write any plabic graph as a union of paths and cycles, as follows.  Start with any edge $e=\{u,v\}$.  Begin traversing this edge in either direction, say 
\begin{math} u \rightarrow v.\end{math}
Turn (maximally) left at every internal white vertex, and (maximally) right at every internal black vertex.  The path ends when we either reach a boundary vertex, or find ourselves about to retrace the edge $u \rightarrow v$.  Repeating this process gives a description of $G$ as a union of directed paths and cycles, called \emph{trips}.  Each edge is used twice in this decomposition, once in each direction.  Given a plabic graph $G$ with $n$ boundary vertices, we define the \emph{trip permutation} 
\begin{math} \sigma_G \in S_n\end{math}
of $G$ by setting 
\begin{math} \sigma_G(a) = b\end{math}
if the trip that starts at boundary vertex $a$ ends at boundary vertex $b$.  

There are a number of \emph{local moves} and \emph{reductions} defined for plabic graphs; again, we use the conventions of \citep{Lam13}, which are adapted slightly from those of \citep{Pos06}.  
The moves are defined as follows.  Both moves are reversible, and preserve the trip permutation.

\begin{enumerate}[(M1)]
\item The ``spider," ``square," or ``urban renewal" move.  We may transform the portion of a plabic graph shown at left in Figure \ref{squaremove} into the portion shown at right, and vice versa.
\item Degree-two vertex removal.  If a vertex $v$ has degree 2, we may contract the incident edges $(u,v)$ and $(v,u')$ to a single vertex.  Note that if $v$ is adjacent to a boundary vertex $b$, we cannot contract all the incident edges, since boundary vertices must have degree $1$.  Hence, we simply remove the vertex $v$, and reverse the color of $b$.
\end{enumerate}

\begin{figure}[h]
\begin{tikzpicture}[scale = 0.75]
\draw (-2,0) -- (0,1) -- (2,0) -- (0,-1) -- (-2,0);
\draw (0,1) -- (0,2);
\draw (0,-1) -- (0,-2);
\wdot{-2}{0};\wdot{2}{0};\bdot{0}{1};\bdot{0}{-1};\wdot{0}{2};\wdot{0}{-2};
\draw [<->] (3,0) [out = 30, in = 150] to (5,0);
\begin{scope}[xshift = 8 cm]
\draw (-1,0) -- (0,2) -- (1,0) -- (0,-2) -- (-1,0);
\draw (-2,0) -- (-1,0);
\draw (1,0) -- (2,0);
\wdot{-2}{0};\wdot{2}{0};\bdot{-1}{0};\bdot{1}{0};\wdot{0}{2};\wdot{0}{-2};
\end{scope}
\end{tikzpicture}
\caption{A square move}
\label{squaremove}
\end{figure}
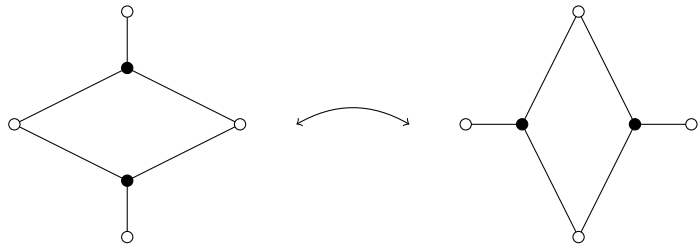

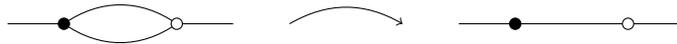
\begin{figure}[ht]
\begin{tikzpicture}[scale = 0.75]
\draw (0,0) -- (1,0) to [out = 35, in = 145] (3,0) -- (4,0);
\draw (1,0) to [out = 325, in = 215] (3,0);
\bdot{1}{0};\wdot{3}{0};
\draw [->] (5,0) [out = 30, in = 150] to (7,0);
\begin{scope}[xshift = 8 cm]
\draw (0,0) -- (1,0) -- (3,0) -- (4,0);
\bdot{1}{0};\wdot{3}{0};
\end{scope}
\end{tikzpicture}
\caption{A reduction.}
\label{reduction}
\end{figure}

Reductions, in contrast, are not reversible, and may change the trip permutation.  We have two reductions.
\begin{enumerate}[(R1)]
\item Multiple edges with the same endpoints may be replaced by a single edge.  See Figure \ref{reduction}.
\item  Leaf removal.  If $v$ is a leaf, and $(u,v)$ the unique edge adjacent to $v$, we may remove both $(u,v)$ and all edges adjacent to $u$.  However, if $u$ is adjacent to a boundary vertex $b$, the edge $(b,u)$ is replaced by a boundary edge $(b,w)$, where $w$ has the same color as $v$, and the color of $b$ flips.
\end{enumerate}

A plabic graph $G$ is \emph{reduced} if it cannot be transformed using the local moves M1-M2 into a plabic graph $G'$ on which we can perform a reduction.
If $G$ is a reduced graph, each fixed point of $\sigma_G$ corresponds to a boundary leaf \citep[Section 13]{Pos06}.
Suppose $G$ has $n$ boundary vertices, and suppose we have
\begin{equation}k=|\{a \in [n] \mid \sigma_G(a) < a \text{ or $\sigma_G(a)=a$ and $G$ has a white boundary leaf at $a$}\}|\end{equation}
Then we can construct a bounded affine permutation $f_G \in \Bd(k,n)$ corresponding to $G$ by setting
\begin{equation}f_G(a) = 
\begin{cases}
\sigma_G(a) & \sigma_G(a) > a \text{ or $G$ has a black boundary leaf at $a$}\\
\sigma_G(a)+n & \sigma_G(a) < a \text{ or $G$ has a white boundary leaf at $a$}
\end{cases}
\end{equation}
Thus we have have a correspondence between plabic graphs and positroid varieties: to a reduced plabic graph $G$, we associate the positroid $\Pi_G^A$ corresponding to $f_G$.  This correspondence is not a bijection.  Rather, we have a family of reduced plabic graphs for each positroid variety.  Two reduced plabic graphs $G$ and $G'$ have the same bounded affine permutation (and hence, the same associated positroid variety) if and only if we can transform $G$ into $G'$ using a sequence of local moves M1 and M2 \citep[Theorem 13.4]{Pos06}.

We now describe a way to build plabic graphs inductively by adding new edges, called \emph{bridges}, to existing graphs.  The resulting graphs are called \emph{bridge graphs}.  This construction appears in \citep{ABCGPT12} and also, in slightly less general form, in \citep{Lam13}.  

We begin with a plabic graph $G$.  To add a bridge, we choose a pair of boundary vertices $a < b$, such that every 
\begin{math} c \in [a+1,b-1]\end{math}
is a lollipop.  Our new edge will have one vertex on the leg at $a$, and one on the leg at $b$.  If $a$ (respectively $b$) is a lollipop, then the leaf at $a$ must be white (respectively black), and we use that boundary leaf as one endpoint of the bridge.  If $a$ (respectively $b$) is not a lollipop, we instead insert a white (black) vertex in the middle of the leg at $a$ (respectively, $b$).  We call the new edge an $(a,b)$-\emph{bridge}. After adding the new edge, our graph may no longer be bipartite.  In this case, we insert additional vertices of degree two or change the color of boundary vertices as needed to obtain a bipartite graph $G'$.  (See Figure \ref{addbridge}.)  

\begin{prop}\citep{Lam13}
\label{canadd}
Suppose $G$ is reduced.  Choose 
\begin{math} 1 \leq a < b \leq n\end{math}
such that
\begin{math} f_G(a) > f_G(b),\end{math}
and each 
\begin{math}c \in [a+1,b-1]\end{math}
is a lollipop.  Let $G'$ be the graph obtained by adding an $(a,b)$-bridge to $G$.  Then $G'$ is reduced and 
\begin{displaymath} f_{G'}= f \circ (a,b) \in \Bd(k,n).\end{displaymath}  
Moreover, 
\begin{math}f_{G'} \lessdot f_G\end{math}
in the Bruhat order on $\Bd(k,n)$, and so $\Pi_G$ is a codimension-one subvariety of $\Pi_{G'}$.  
\end{prop}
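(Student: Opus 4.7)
The strategy is to trace the trips through the new bridge, show that this produces $f_{G'} = f_G \circ (a,b)$ as a Bruhat-covering relation in $\mathrm{Bd}(k,n)$, and then deduce reducedness of $G'$ and the codimension statement.

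First I would analyze the trip permutation of $G'$ locally at the two new interior vertices of the bridge (the white vertex $w$ on the leg at $a$ and the black vertex $v$ on the leg at $b$). The turning rule ``left at white, right at black'' applied at $w$ and $v$ shows that trips entering at boundary vertices other than $a, b$ never traverse the bridge, so they are unchanged; the trips from $a$ and $b$, on the other hand, are diverted across the bridge and swap their endpoints. Translating this from $\sigma_{G'}$ to $f_{G'}$ requires the hypothesis $f_G(a) > f_G(b)$: it forces $a < f_G(b) < f_G(a) \leq a+n$, which is exactly the boundedness needed to make $f_G \circ (a,b)$ (with $(a,b)$ viewed as the affine transposition swapping $a+rn \leftrightarrow b+rn$) a well-defined element of $\mathrm{Bd}(k,n)$. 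Since adding a bridge does not alter the color of boundary leaves at other positions, one checks $f_{G'}(c) = f_G(c)$ for $c \notin \{a,b\} \pmod{n}$, giving $f_{G'} = f_G \circ (a,b)$.

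Next I would verify the covering relation $f_{G'} \lessdot f_G$ in the affine Bruhat order. The standard criterion for right multiplication by a transposition requires $f_G(a) > f_G(b)$ (given) together with the absence of any $c$ strictly between $a$ and $b$ with $f_G(b) < f_G(c) < f_G(a)$, and similarly for the shifted interval $[a+n+1, b+n-1]$. For $c \in [a+1,b-1]$, the lollipop hypothesis gives $f_G(c) \in \{c, c+n\}$; but $c < b \leq f_G(b)$ and $c+n > a+n \geq f_G(a)$, so neither possible value lies in the open interval $(f_G(b), f_G(a))$. The shifted interval is handled identically by the periodicity relation \eqref{affine}. Hence $\ell(f_{G'}) = \ell(f_G) - 1$.

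Finally, reducedness of $G'$ follows from a count: adding a bridge increases the edge count of $G$ by one (after absorbing any degree-two vertices introduced by the construction), and for reduced plabic graphs the edge count is determined by $\ell(f_G)$ up to a fixed constant depending only on $n$, so the decrement $\ell(f_{G'}) = \ell(f_G) - 1$ matches the increment in edges; if $G'$ were not reduced, a further reduction would force a mismatch. The codimension statement is then immediate from the isomorphism $\mathrm{Bd}(k,n) \cong \mathcal{Q}(k,n)$ and the anti-isomorphism of $\mathcal{Q}(k,n)$ with the poset of projected Richardson varieties: the covering $f_{G'} \lessdot f_G$ translates to $\Pi_G \subsetneq \Pi_{G'}$ with $\dim \Pi_{G'} = \dim\Pi_G + 1$. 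The main obstacle in this plan is the local trip-tracing in the first step: one must carefully check the cyclic order of edges at $w$ and $v$ for each configuration (whether $a$ or $b$ was already a lollipop, and in which color) and confirm that the left/right-turn rule sends the two affected trips exactly across the bridge, swapping their exits.
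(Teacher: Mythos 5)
The paper gives no proof of this proposition; it is quoted directly from \citep{Lam13}, so there is no in-house argument to compare against. Assessed on its own terms, your outline hits the right sequence of sub-claims: (a) the new trip permutation is the old one composed with $(a,b)$, (b) the resulting affine permutation is bounded, (c) the covering criterion holds, and (d) the codimension statement follows from the poset isomorphism $\Bd(k,n)\cong\Q(k,n)$ and the anti-isomorphism with the closure order. Steps (b), (c), (d) are done correctly. But there are two real gaps.

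First, step (a) -- which you yourself flag as the ``main obstacle'' -- is not carried out, and it is exactly where the proof lives. It is not enough to observe that trips \emph{starting} at other boundary vertices never meet the bridge; trips \emph{ending} at $a$ or $b$ also pass through the newly inserted vertices, and one must verify (via the cyclic order of edges at $w$ and at $v$, in each of the four configurations depending on whether $a$ and $b$ were lollipops) that the left/right turning rule sends these trips to their original exits. Until that local case analysis is actually done, the identity $f_{G'} = f_G\circ(a,b)$ is unestablished, and every subsequent step depends on it.

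Second, the reducedness argument is wrong as stated. The edge count of a reduced bipartite plabic graph is \emph{not} determined by $\ell(f_G)$ up to a constant: degree-two vertex insertion/removal is a move (M2) and changes the edge count by one without changing the bounded affine permutation, so two reduced graphs for the same cell can have different numbers of edges. The move-invariant you actually want is the number of faces, which is preserved by both M1 and M2, and which for a reduced graph equals $\dim\Pi_G + 1$; for a non-reduced graph whose trip permutation has no forbidden fixed points, the face count strictly exceeds this bound. Adding a bridge increases the face count by exactly one, and the covering relation (step (c)) gives $\dim\Pi_{G'} = \dim\Pi_G + 1$, so the face count of $G'$ matches $\dim\Pi_{G'}+1$, forcing $G'$ to be reduced. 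Replacing your edge count with this face count repairs the argument.

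Finally, a small remark: to conclude $f_{G'}\in\Bd(k,n)$ rather than just $\Bd(\cdot,n)$, one should note that right-composition with the transposition $(a,b)$ (with $a,b\in[n]$) does not change the type $k$, since $\sum_{i=1}^n(f\circ(a,b))(i) = \sum_{i=1}^n f(i)$.
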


\begin{figure}[ht]
\centering
\begin{subfigure}[b]{\textwidth}
\centering
\begin{tikzpicture}[scale = 0.7]
\draw (2,4) -- (2,0);
\draw (0,3) -- (2,3); \wdot{0}{3}; \bdot{2}{3};
\draw (0,1) -- (2,1); \bdot{0}{1}; \wdot{2}{1};
\draw [->] (3,2) [out = 30, in = 150] to (5,2);
\begin{scope}[xshift = 6 cm]
\draw (2,4) -- (2,0);
\draw (0,3) -- (2,3); \wdot{0}{3}; \bdot{2}{3};
\draw (0,1) -- (2,1); \bdot{0}{1}; \wdot{2}{1};
\edge{0}{1}{2};
\end{scope}
\end{tikzpicture}
\caption{Adding a bridge between lollipops.}
\end{subfigure}
\\
\vspace{0.2 in}

\begin{subfigure}[b]{\textwidth}
\centering
\begin{tikzpicture}[scale = 0.7]
\draw (2,4) -- (2,0);
\draw (-1,3.5) -- (0,3);
\draw (-1,2.5) -- (0,3);
\draw (-1,1.5) -- (0,1);
\draw (-1,0.5) -- (0,1);
\draw (0,3) -- (2,3); \wdot{0}{3}; \bdot{2}{3};
\draw (0,1) -- (2,1); \bdot{0}{1}; \wdot{2}{1};
\draw [->] (3,2) [out = 30, in = 150] to (5,2);
\begin{scope}[xshift = 8 cm]
\draw (2,4) -- (2,0);
\draw (-2,3.5) -- (-1,3);
\draw (-2,2.5) -- (-1,3);
\draw (-2,1.5) -- (-1,1);
\draw (-2,0.5) -- (-1,1);
\draw (-1,3) -- (2,3); \wdot{-1}{3}; \bdot{2}{3};
\draw (-1,1) -- (2,1); \bdot{-1}{1}; \wdot{2}{1};
\edge{0.5}{1}{2};
\draw [->] (3,2) [out = 30, in = 150] to (5,2);
\end{scope}
\begin{scope}[xshift = 16 cm]
\draw (2,4) -- (2,0);
\draw (-2,3.5) -- (-1,3);
\draw (-2,2.5) -- (-1,3);
\draw (-2,1.5) -- (-1,1);
\draw (-2,0.5) -- (-1,1);
\draw (-1,3) -- (2,3); \wdot{-1}{3}; \bdot{2}{3};
\draw (-1,1) -- (2,1); \bdot{-1}{1}; \wdot{2}{1};
\edge{0.5}{1}{2}; \bdot{-0.25}{3}; \wdot{-0.25}{1};
\end{scope}
\end{tikzpicture}
\caption{Adding a bridge whose endpoints are not lollipops.  Note that after adding the bridge, we add additional vertices of degree $2$ to create a bipartite graph.}
\end{subfigure}
\caption{Adding bridges to a plabic graph.}
\label{addbridge}
\end{figure}
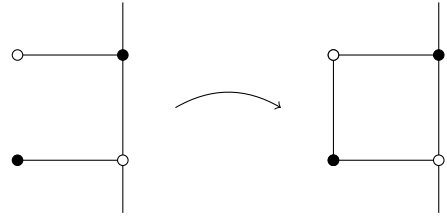
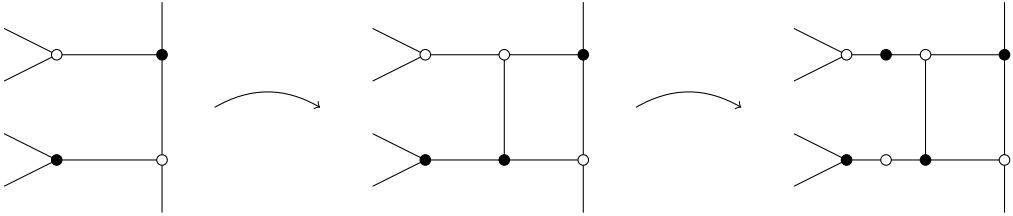

The zero-dimensional positroid varieties correspond to the points in $\Gr(k,n)$ which have a single non-zero Pl\"{u}cker coordinate $\mu$.  There is a unique reduced plabic graph for each $\mu \in {{[n]}\choose{k}}$, which consists of $n$ lollipops.  The $k$ lollipops corresponding to elements of $\mu$ are white; the rest are black.  We call a plabic graph consisting only of lollipops a \emph{lollipop graph}.  

A \emph{bridge graph} is a plabic graph which is constructed from a lollipop graph by successively adding bridges 
\begin{equation}(a_1,b_1),\ldots,(a_d,b_d),\end{equation} 
where at each step, $(a_i,b_i)$ satisfies the hypothesis of Proposition \ref{canadd} for the graph obtained by adding the first $i-1$ bridges.
Hence a bridge graph is always reduced.  
Let $u \leq_k w$.  Then by \eqref{permfactors} we have
\begin{equation}f_{u,w}= t_{u([k])}uw^{-1}\end{equation}
where $t_{u([k])}$ is the translation element corresponding to $u([k])$.
To construct a bridge graph for 
\begin{math}\Pi_{\langle u,w\rangle_k}\end{math}
we begin with the lollipop graph corresponding to $u([k])$, and successively add bridges to obtain a graph with bounded affine permutation $f_{u,w}$.  It is perhaps not obvious that every positroid variety has a bridge graph.  However, this follows from earlier work on the subject.  In particular, Postnikov's \Le-diagrams correspond to a particular choice of bridge graph for each bounded affine permutation.

\subsection{Parametrizations from plabic graphs}

Let $G$ be a reduced plabic network whose bounded affine permutation has type $(k,n)$.  Suppose $G$ has $e$ edges, and assign weights 
\begin{math}t_1,\ldots,t_e\end{math}
to the edges of $G$.  Postnikov defined a surjective map from the space of positive real edge weights of $G$ to the positroid cell 
\begin{math}\left(\mathring{\Pi}^A_G\right)_{\geq 0}\end{math}
in $\Gr_{\geq 0}(k,n)$, called the \emph{boundary measurement map} \citep[Section 11.5]{Pos06}.   Postnikov, Speyer and Williams re-cast this construction in terms of almost perfect matchings \citep[Section 4-5]{PSW09}, an approach Lam developed further in \citep{Lam13}.  Muller and Speyer showed that we can apply the same map to the space of nonzero complex edge weights, and obtain a map to the positroid variety 
\begin{math} \mathring{\Pi}^A_G\end{math}
in $\Gr(k,n)$ \citep{MS14}.  We use the definition of the boundary measurement map found in \citep{Lam13}. 

For $P$ an almost perfect matching on a plabic graph $G$ with $e$ edges, let 
\begin{equation}\partial(P) = \{\text{black boundary vertices used in $P$}\} \cup \{\text{white boundary vertices \emph{not} used in $P$}\}\end{equation}
Then $|\partial(P)|=k$, and we define the \emph{boundary measurement map} 
\begin{equation}\partial_G: \complexes^e \rightarrow \mathbb{P}^{{n \choose k}-1}\end{equation}
to be the map which sends 
\begin{math}(t_1,\ldots,t_e)\end{math}
to the point with homogeneous coordinates 
\begin{equation}\Delta_J = \sum_{\partial(P)=J} t^P\end{equation}
where the sum is over all matchings $P$ of $G$, and $t^P$ is the product of the weights of all edges used in $P$ \citep{Lam13}.

For positive real edge weights, the boundary measurement map $\partial_G$ is surjective onto the positroid with bounded affine permutation $f_G$.  If instead we let the edge weights range over 
\begin{math} \complexes^{\times},\end{math}
we obtain a well-defined map to the open positive variety 
\begin{math} \mathring{\Pi}^A_G\end{math}
in $\Gr(k,n)$. The image is an open dense subset of $\Pi^A_G$ \citep{MS14}.

The boundary measurement map is typically not injective, due to the action of the \emph{gauge group}.  Let $V$ be the set of all internal vertices of $G$.  The gauge group $\mathbb{G}^V$ is a copy of $(\mathbb{C}^{\times})^{|V|}$ with coordinates indexed by $V$.  Let $\mathbb{G}^E$ denote the space of complex edge weights of $G$.  Then $\mathbb{G}^V$ acts on $\mathbb{G}^E$ by \emph{gauge transformations}.  For $\mu \in \mathbb{G}^V$ and $v \in V$, let $\mu_v$ be the coordinate of $\mu$ corresponding to $v$.  Then the action of $\mu$ multiplies the weights of each edge incident to $v$ by $\mu_v$.  The weight of an edge $(v,w)$ is thus multiplied by the product $\mu_v\mu_w$.  It is easy to see that the action of $\mathbb{G}^V$ preserves the boundary measurement map.  Taking the quotient by this action, we obtain a map 
\[\mathbb{D}_G:\mathbb{G}^E/\mathbb{G}^V \rightarrow \mathring{\Pi}^A_G\]
which carries the image of each weighting $(t_1,\ldots,t_e)$ in $\mathbb{G}^E/\mathbb{G}^V$ to $\partial_G(t_1,\ldots,t_e)$.  This map is not only injective, but birational onto its image \citep{MS14}.

Analogous statements hold for positive real edge weights, where the action is by positive real gauge transformations; in this setting, taking the quotient by the gauge group gives an isomorphism onto the positroid cell corresponding to $G$ \citep{Pos06}.  We will abuse terminology slightly, and refer to both $\partial_G$ and $\mathbb{D}_G$ as the \emph{boundary measurement map}; it should be clear from context which map is meant.

Taking the quotient by the gauge group is equivalent to specializing an appropriate set of edge weights to $1$, and letting the remaining edge weights range over either $\mathbb{R}^+$ or $\mathbb{C}^{\times}$.  Indeed, suppose
$F \subset E$ is a set which meets the following conditions. 
\begin{enumerate}
\item $F$ is a spanning forest of $G$.
\item Each connected component of $F$ has exactly one vertex on the boundary.
\end{enumerate}
We may construct such a set inductively for any $G$.  
It is not hard to show that each point in $\mathbb{G}^E/\mathbb{G}^V$ can be represented uniquely by a weighting of $G$ with all edges in $F$ gauge-fixed to $1$.  Let
$\mathbb{G}^F$ denote the space of all such weightings.  Then the natural map  $\mathbb{G}^E/\mathbb{G}^V \rightarrow \mathbb{G}^F$ is an isomorphism.

If $G$ is a bridge graph, there is a natural specialization of edge weights, and we have a simple procedure for constructing the desired parametrization. 
Let 
\begin{math} \Pi^A_G=\Pi^A_{\langle u, w \rangle_k}\end{math}
and let 
\begin{math}d = \dim (\Pi^A_G).\end{math} 
 Assign a weight 
\begin{math} t_1,\ldots,t_d\end{math}
to each bridge, in the order the bridges were added, and set all other edge weights to $1$.  Begin with the $k \times n$ matrix in which the columns indexed by $u([k])$ form a copy of the identity, while the remaining columns contain only $0$'s.  Say the $r^{th}$ bridge is from $a_r$ to $b_r$ with $a_r < b_r$.  When we add the $r^{th}$ bridge to the graph, we multiply our matrix on the right by 
\begin{math} x^A_{(a_r,b_r)}(\pm t_r),\end{math}
the elementary matrix with nonzero entry $\pm t_r$ in row $a_r$ and column $b_r$.
The sign is negative if
\begin{math} |u([k]) \cap [a_r+1,b_r-1]|\end{math}
is odd, and positive if 
\begin{math} |u([k]) \cap [a_r+1,b_r-1]|\end{math}
is even.

Suppose $G$ and $G'$ are related to each other by one of the local moves defined in \ref{plabic}.  Then the maps $\partial_G$ and $\partial_{G'}$ are related by a birational change of variables.  For degree-two vertex removal, we may simply assume that both edges adjacent to the degree-two vertex are fixed to $1$, so the change of variables is trivial.  For the square move, assume all unlabeled edges in Figure \ref{squarecoords} are gauge-fixed to $1$.  Then we have the transformation shown in Figure \ref{squarecoords}, where
\[a' = \frac{a}{ac+bd},\:b' = \frac{b}{ac+bd},\;c' = \frac{c}{ac+bd},\;d' = \frac{d}{ac+bd}.\]

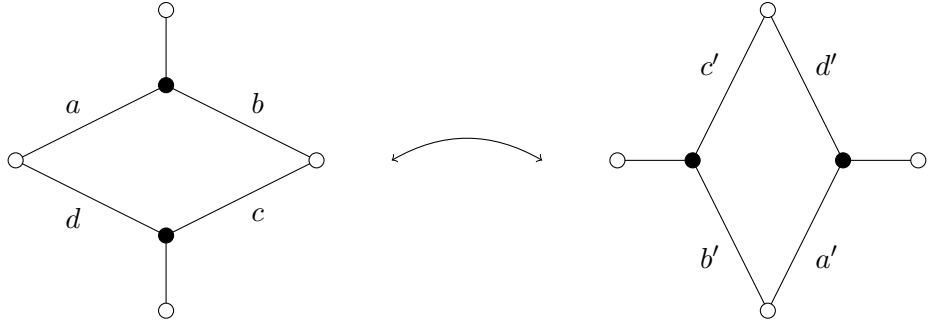
\begin{figure}[H]
\begin{tikzpicture}
\draw (-2,0) -- (0,1) -- (2,0) -- (0,-1) -- (-2,0);
\draw (0,1) -- (0,2);
\draw (0,-1) -- (0,-2);
\wdot{-2}{0};\wdot{2}{0};\bdot{0}{1};\bdot{0}{-1};\wdot{0}{2};\wdot{0}{-2};
\node[above left] at (-1,0.5) {$a$};
\node [below left] at (-1,-0.5) {$d$};
\node[above right] at (1,0.5) {$b$};
\node [below right] at (1,-0.5) {$c$};
\draw [<->] (3,0) [out = 30, in = 150] to (5,0);
\begin{scope}[xshift = 8 cm]
\draw (-1,0) -- (0,2) -- (1,0) -- (0,-2) -- (-1,0);
\draw (-2,0) -- (-1,0);
\draw (1,0) -- (2,0);
\wdot{-2}{0};\wdot{2}{0};\bdot{-1}{0};\bdot{1}{0};\wdot{0}{2};\wdot{0}{-2};
\node[above left] at (-0.5,1) {$c'$};
\node [below left] at (-0.5,-1) {$b'$};
\node[above right] at (0.5,1) {$d'$};
\node [below right] at (0.5,-1) {$a'$};
\end{scope}
\end{tikzpicture}
\caption{A square move}
\label{squarecoords}
\end{figure}

\subsection{Bridge graphs and MR parametrizations}

Let $\Pi^A$ be a positroid variety in $\Gr(k,n)$.  We have a family of MR parametrizations of $\Pi^A$, and another family of parametrizations of $\Pi^A$ arising from bridge graphs.  The following theorem, which is the main result of \citep{Kar14}, shows that these two families of parametrizations are essentially the same.  Note that in \cite{Kar14}, MR parametrizations were called ``Deodhar parametrizations.'' (See Remark \ref{changeterms}).

\begin{thm} Let $\Pi^A$ be a positroid variety.  For each MR parametrization of $\Pi^A$, there is a bridge graph which yields the same parametrization.  
Conversely, any parametrization of $\Pi^A$ which comes from a bridge graph agrees with some MR parametrization.
\end{thm}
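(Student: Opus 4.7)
The strategy is to establish an explicit dictionary between the data of an MR parametrization and the data of a bridge graph, and then to check that the two matrix products built from these data coincide. The key observation on the algebraic side is that left multiplication of an element $g \in \Sl(n)$ by $y^A_{(a,b)}(\pm t)$ (for $a<b$) is converted, after taking the first $k$ columns of $g$ and transposing to obtain a $k \times n$ representative $M$ of the image in $\Gr(k,n)$, into right multiplication of $M$ by the elementary matrix $x^A_{(a,b)}(\pm t)$. Moreover, the transpose of the first $k$ columns of the Weyl-group element $\dot u_1 \cdots \dot u_m$ is precisely the ``lollipop'' $k \times n$ matrix whose columns indexed by $u([k])$ form the identity (up to signs). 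So the MR formula $g = (\beta_1 \beta_2 \cdots \beta_d)(\dot u_1 \cdots \dot u_m)$ with $\beta_r = y^A_{(a_r,b_r)}(\pm t_r)$ turns, after descending to $\Gr(k,n)$, into exactly the matrix built by starting from the lollipop representative of $u([k])$ and successively right-multiplying by $x^A_{(a_r,b_r)}(\pm t_r)$ in reverse order of the $\beta_r$'s.

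To prove the forward direction, I would start with a reduced word $\mf{w}$ for $w$ and the unique PDS $\mf{u}$ for $u$ in $\mf{w}$. Extracting the indices $j_1 < \cdots < j_d$ of the simple reflections of $\mf{w}$ that do not appear in $\mf{u}$ produces, via the lemma recalled at the end of Section~\ref{params}, a sequence of pairs $(a_r,b_r)$. The plan is to build the bridge graph inductively, starting from the lollipop graph $G_0$ corresponding to the positroid $u([k])$ and then adding an $(a_r,b_r)$-bridge at each step. One must verify that at the $r^{\text{th}}$ step the hypothesis of Proposition~\ref{canadd} holds: the trip permutation of the partial bridge graph after $r-1$ bridges is $f_{u_{(j_{r}-1)},w} = u_{(j_r-1)} t_{[k]} w^{-1}$, and an elementary computation using the definition of PDS shows that $(a_r,b_r)$ is an inversion of this affine permutation with no non-lollipop boundary vertex between $a_r$ and $b_r$. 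The matrix comparison above then identifies the bridge graph specialization (with non-bridge edges gauge-fixed to $1$) with the MR parametrization; crucially the signs match, since both constructions give a minus sign exactly when $|u([k]) \cap [a_r+1,b_r-1]|$ is odd.

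For the converse, given a bridge graph $G$ for $\Pi^A$ obtained by adding bridges $(a_1,b_1),\ldots,(a_d,b_d)$ to the lollipop graph for $u([k])$, run the previous construction backwards. The sequence of covering relations $f_{G_r} \lessdot f_{G_{r-1}}$ in $\Bd(k,n)$ produced by Proposition~\ref{canadd} lifts through the isomorphism $\Q(k,n) \cong \Bd(k,n)$ to a saturated chain in $P$-Bruhat order, which in turn determines a reduced word $\mf{w}$ for $w$ together with a distinguished subexpression for $u$ in $\mf{w}$ whose non-PDS positions are exactly $j_1,\ldots,j_d$. By the matrix dictionary this subexpression is automatically positive distinguished (the factors are chosen greedily from the right), so it is the unique PDS, and the resulting MR parametrization reproduces the bridge graph parametrization.

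The main obstacle is the bookkeeping in the second paragraph, namely verifying that the pairs $(a_r, b_r)$ coming from a PDS satisfy the Proposition~\ref{canadd} hypothesis at every intermediate step. This is where the ``positive'' in PDS is essential: the inequality $u_{(j-1)} < u_{(j-1)} s_{i_j}$ at every unused position forces the corresponding intermediate boundary vertices to remain lollipops, so no bridge crosses a previously-added bridge. Matching signs, on the other hand, is automatic once one observes that both the MR and bridge-graph constructions compute the sign via the parity of $|u([k]) \cap [a_r+1,b_r-1]|$, so this reduces to comparing two expressions with identical sign conventions.
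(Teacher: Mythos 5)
Your overall strategy — matching matrix products via the transpose trick that converts left multiplication by $y^A_{(a,b)}(\pm t)$ in $\Sl(n)$ into right multiplication by $x^A_{(a,b)}(\pm t)$ on a $k\times n$ representative — is exactly the right dictionary, and your observation about the sign matching ($(-1)^{|u([k]) \cap [a_r+1,b_r-1]|}$ on both sides) is correct. However, the bookkeeping in your second paragraph contains a genuine error, and since you yourself identify that bookkeeping as ``the main obstacle,'' the gap is substantive.

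The order in which bridges are added is the reverse of what you wrote. Since $g = (\beta_1\cdots\beta_d)(\dot u_1 \cdots \dot u_m)$ and you transpose, the descent to $\Gr(k,n)$ gives $M_0\, x^A_{(a_d,b_d)}(\pm t_d)\cdots x^A_{(a_1,b_1)}(\pm t_1)$, so the first bridge added to the lollipop graph must be $(a_d,b_d)$, not $(a_1,b_1)$. Your stated intermediate trip permutation $f_{u_{(j_r-1)},w} = u_{(j_r-1)} t_{[k]} w^{-1}$ is incorrect even at $r=1$: after zero bridges the trip permutation is $t_{u([k])} = f_{u,u}$, which is not $f_{u_{(j_1-1)},w}$ unless $d=0$. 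The correct invariant is that $u$ stays fixed (the lollipop graph fixes $u([k])$ once and for all) while a truncated $w$ changes: after adding $(a_d,b_d),\ldots,(a_{r+1},b_{r+1})$, the bounded affine permutation is $t_{u([k])}(a_d,b_d)\cdots(a_{r+1},b_{r+1}) = u\, t_{[k]}\, (w'_r)^{-1}$ where $w'_r = u_{(j_r)}\, s_{i_{j_r+1}}\cdots s_{i_m}$. If you try to run the inductive check with your formula, the hypothesis of Proposition~\ref{canadd} will not verify. Separately, in the converse direction you assert that the distinguished subexpression extracted from a bridge graph is ``automatically positive distinguished (the factors are chosen greedily from the right),'' but greediness is the characterization of a PDS, not a free consequence — one needs either a dimension count (the bridge graph has exactly $d=\dim\Pi^A$ parameters, so the corresponding Deodhar component must be the dense one, which is indexed by the PDS) or an inductive argument tracking lengths. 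The paper defers all of this detailed verification to \citep{Kar14}; your sketch correctly identifies what must be checked but the specific formulas and the bridge order you supply would not make the check go through.
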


In Section \ref{Cbridge}, we will use this result to define a class of networks which encode MR parametrizations for top-dimensional Deodhar components of projected Richardson varieties in $\Lm(2n)$.  In Section 5, we generalize these type C bridge graphs to construct a broader class of type C plabic graphs.  Here, we review the construction which yields a bridge graph for each MR parametrization.

Let \begin{math}\mf{w}=s_{i_1}s_{i_2}\cdots s_{i_m}\end{math} be a reduced word for $w \in S_n$, let \begin{math}u \leq_k w,\end{math} and let \begin{math}\mf{u}\preceq \mf{w}\end{math} be the PDS for  $u$ in $\mf{w}$.  
Let \begin{math} j_1,\ldots, j_d\end{math} be the indices of the simple transpositions of $\mf{w}$ which are not in $\mf{u}$.  For \begin{math}1 \leq r \leq d,\end{math} let 
\begin{equation}(a_r,b_r)=u_{(j_r-1)}(s_{i_{j_r}})u_{(j_r-1)}^{-1}.\end{equation}
Then we have
\begin{equation}(a_1,b_1)\cdots (a_d,b_d)=wu^{-1}.\end{equation}
Reversing the order of the transpositions gives
\begin{equation} \label{gens} (a_d,b_d)\cdots(a_2,b_2)(a_1,b_1)=uw^{-1}\end{equation}
and so we have
\begin{equation} t_{u([k])}(a_d,b_d)\cdots (a_1,b_1)=f_{u,w} \label{rightperm} \end{equation}

To construct the desired bridge graph, we successively add bridges 
\begin{displaymath}(a_d,b_d),\dots,(a_1,b_1)\end{displaymath}
to the lollipop graph corresponding to $u([k])$.  The hypotheses of Proposition \ref{canadd} are satisfied at each step, and so the result is a bridge graph $G$ with the desired bounded affine permutation.  For details, see \citep{Kar14}.

We note that the parametrization arising from $G$ is precisely the projected MR parametrization corresponding to 
\begin{math} \mf{u} \preceq \mf{w}.\end{math}  Indeed, recall that the matrices $\beta_r$ in \eqref{betas} are given by 
\begin{displaymath} \beta_r = y^A_{(a_r,b_r)}(\pm t_r)\end{displaymath}
for all \begin{math}1 \leq r \leq d.\end{math}  We project to the Grassmannian by taking each representative matrix $M \in \Sl(n)$ to the \emph{transpose} of the $n \times k$ matrix formed by its first $k$ columns.  Notice that taking transposes reverses the order of multiplication, and that $x_{\alpha}^A(t) = (y_{\alpha}^A(t))^T$ for each $\alpha$.  

Hence, we may construct both the bridge graph parametrization described above and the MR parametrization for $\mathcal{D}_{\mf{u},\mf{w}}$ by taking the $k \times n$ matrix which has a single non-zero Pl\"ucker coordinate $\Delta_{u([k])}$ and multiplying on the right by a sequence of factors
\[x^A_{(a_d,b_d)}(\pm t_d)\cdots x^A_{(a_1,b_1)}(\pm t_1)\]
For each $1 \leq r \leq d$, the sign of the parameter $t_r$ is negative if $|u([k]) \cap [a_r+1,b_r-1]|$ is odd, and positive otherwise, so the parametrizations are the same.  

\subsection{\protect\Le-diagrams}

A \Le-\emph{diagram} is a Young diagram filled with $0$'s and $+$'s according to certain rules.  (The symbol \Le\,
is pronounced ``le,'' which is ``ell'' backwards.)  Postnikov showed that \Le-diagrams are in bijection with PDS's of Grassmannian permutations, and constructed a plabic graph corresponding to each \Le-diagram \citep{Pos06}. Hence \Le-diagrams provide the first example of the relationship between planar graphs and PDS's.  Lam and Williams defined analogs of \Le-diagrams for all \emph{cominiscule Grassmannians} \citep{LW07}.  Since their construction depends only on Weyl group data, the \emph{type $B$ \Le-diagrams} of Lam and Williams index projected Richardson varieties in $\Lm(2n)$.
Although \Le-diagrams will not play a major role in our results, we review them here for completeness.

To construct a \Le-diagram, begin with a $k \times (n-k)$ rectangle, subdivided into unit boxes. Order the boxes as shown in Figure \ref{le}.  The lower order ideals with respect to this ordering are precisely the \emph{Young diagrams} that fit inside a $k \times (n-k)$ rectangle. (Note that our Young diagrams are drawn using French notation, with minimal elements at the bottom left corner; Postnikov uses the English notation, with minimal elements at the top left.)

We label each box with a simple transpositions $S_n$ as shown in the figure; boxes on the same diagonal are labeled with the same transposition.  
With these conventions, Young diagrams that fit inside a $k \times (n-k)$ rectangle are in bijection with Grassmannian permutations of type $(k,n)$.  The bijection is as follows.  The boxes in a Young diagram give a sequence of simple transpositions in $S_n$.  We read off the boxes in increasing order, and list the corresponding transpositions from right to left.  This process yields a reduced word for some Grassmannian permutation $w_Y \in S_n,$ and the map $Y \rightarrow w_Y$ is a bijection. 
See Figure \ref{le} for an example.

\begin{figure}[ht]
\centering
\begin{tikzpicture}
\draw[step = 0.5 cm] (0,0) grid (1.5,1);
\node at (0.25,0.25) {1};
\node at (0.75,0.25) {2};
\node at (1.25,0.25) {3};
\node at (0.25,0.75) {4};
\node at (0.75,0.75) {5};
\node at (1.25,0.75) {6};
\begin{scope}[xshift = 2 cm]
\draw[step = 0.5 cm] (0,0) grid (1.5,1);
\node at (0.25,0.25) {$s_2$};
\node at (0.75,0.25) {$s_3$};
\node at (1.25,0.25) {$s_4$};
\node at (0.25,0.75) {$s_1$};
\node at (0.75,0.75) {$s_2$};
\node at (1.25,0.75) {$s_3$};
\end{scope}
\begin{scope}[xshift = 5 cm]
\draw[step = 0.5 cm] (0,0) grid (1,1);
\draw (1,0) -- (1.5,0) -- (1.5,0.5) -- (1,0.5);
\node at (0.25,0.25) {$0$};
\node at (0.75,0.25) {$0$};
\node at (1.25,0.25) {$0$};
\node at (0.25,0.75) {$0$};
\node at (0.75,0.75) {$0$};
\end{scope}
\begin{scope}[xshift = 7 cm]
\draw[step = 0.5 cm] (0,0) grid (1,1);
\draw (1,0) -- (1.5,0) -- (1.5,0.5) -- (1,0.5);
\node at (0.25,0.25) {$+$};
\node at (0.75,0.25) {$0$};
\node at (1.25,0.25) {$+$};
\node at (0.25,0.75) {$0$};
\node at (0.75,0.75) {$0$};
\end{scope}
\end{tikzpicture}
\caption{Constructing \protect\Le-diagrams for the case $k=2,$ $n=5$.  The diagrams at right correspond to expressions $s_2s_1s_4s_3s_2$ and $s_2s_11s_31$, respectively}
\label{le}
\end{figure}
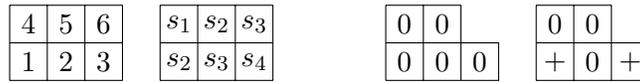 
 
Let $Y$ be a Young diagram, and let $\mf{w}_Y$ be the corresponding reduced word for $w_Y$.  We represent subexpressions of $\mf{w}_Y$ using \emph{$\oplus$-diagrams} of shape $Y$, that is, fillings of $Y$ with $0$'s and $+$'s. To construct the subexpression corresponding to an $\oplus$-diagram of shape $Y$, we replace each factor of $\mf{w}_Y$ corresponding to a box containing $+$ by the identity permutation $1$.  An $\oplus$-diagram is a \emph{\Le-diagram} if it corresponds to a PDS. 

Recall that positroid varieties are indexed by pairs $u \leq w$ with $w$ Grassmannian.  Further, if $u \leq w$, and $\mf{w}$ is a reduced word of $w$, there is a unique PDS for $u$ in $\mf{w}$.  It follows that \Le-diagrams are in bijection with positroid varieties.  Moreover, \Le-diagrams are characterized by a simple pattern-avoidance condition.  

\begin{thm}[\cite{Pos06}]
Let $Y$ be a Young diagram.  An $\oplus$-diagram of shape $Y$ is a \Le-diagram if no $0$ has both a $+$ in the same row to its left, and a $+$ in the same column below it.
\end{thm}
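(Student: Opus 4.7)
The plan is to analyze the partial products that arise as we process the boxes of $Y$ in the given reading order, and to translate the PDS condition into a local combinatorial condition on the $\oplus$-diagram. For each box $b$ labeled by the simple transposition $s_{i_b}$, let $u_{(b-1)}$ denote the product (in the order prescribed by $\mathbf{w}_Y$) of the simple transpositions coming from $0$-boxes that precede $b$ in the reading order. By the characterization of PDS recalled earlier in the excerpt, the $\oplus$-diagram is a PDS exactly when, at every box $b$, we have $u_{(b-1)} \lessdot u_{(b-1)} s_{i_b}$; equivalently, $u_{(b-1)}(i_b) < u_{(b-1)}(i_b+1)$.

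I would make this condition transparent using a wiring-diagram picture: declare each $0$-box to be a crossing tile and each $+$-box to be an elbow tile, so that the wires labeled $1,\ldots,n$ entering along one side trace paths through the rectangle and exit along the other. With the conventions chosen so that the tiles are processed in the specified reading order along the wires, a direct check shows that the two wires meeting at the tile at box $b$ are exactly the ones occupying the current positions $i_b$ and $i_b+1$. Hence the PDS condition at box $b$ says precisely that these two wires have not crossed at any earlier tile.

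The heart of the argument is then the claim that the pattern-avoidance condition is equivalent to the statement ``no two wires cross twice in the wiring diagram.'' For one direction, if wires $p$ and $q$ meet at an elbow $+$-box and later at a crossing $0$-box, tracing the region bounded between these two meeting points against the rectangle's boundary produces, at one of its corners, a $0$-box having a $+$-box to its left in the same row and a $+$-box below it in the same column, which is exactly the forbidden pattern. Conversely, given a $0$-box $b_0$ at position $(r,c)$ with a $+$-box $b_L$ at $(r,c')$ for some $c'<c$ and a $+$-box $b_D$ at $(r',c)$ for some $r'<r$, the two wires passing through the elbow $b_L$ can be shown (by straightforward induction on the boxes between $b_L$, $b_D$, and $b_0$) to be precisely the two wires that meet at $b_0$; in that case the PDS condition must already have failed at some intermediate box, contradicting PDS. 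Equivalently, one follows the wire contributions to show $u_{(b_0-1)}(i_{b_0}) > u_{(b_0-1)}(i_{b_0}+1)$.

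The main obstacle is pinning down the wiring conventions so that the geometric picture exactly matches the combinatorial reading order coming from the labeling of the boxes by simple transpositions in $S_n$, and verifying that the ``bad'' local configuration is indeed the unique obstruction to reducedness in this particular rectangular diagram. Once those conventions are fixed, both implications follow from the same geometric observation; if one prefers to avoid wiring diagrams altogether, the equivalence can alternatively be proved by induction on the number of boxes of $Y$, where the inductive step directly verifies the effect of appending one more box to the partial product $u_{(b-1)}$ and detects the forbidden pattern precisely when the descent condition $u_{(b-1)}(i_b) > u_{(b-1)}(i_b+1)$ first appears.
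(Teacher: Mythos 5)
The paper quotes this theorem from \citep{Pos06} without proof, so there is no in-paper argument to compare against; I am evaluating your sketch on its own. Your overall strategy --- realize the $\oplus$-diagram as a wiring diagram with crossings at $0$-boxes and elbows at $+$-boxes, note that the condition at the letter coming from box $b$ asks that the two wires then occupying positions $i_b$ and $i_b+1$ not yet have crossed, and match the global failure of this condition with the forbidden pattern --- is the standard route, and your first paragraph (including the point that the condition is imposed at every letter of $\mathbf{w}_Y$, deleted or not) is correct.

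The gap is in the step you call the heart of the argument: neither reformulation you give of the failure condition is right. What actually characterizes failure is ``some pair of wires crosses at a $0$-box and then arrives together at a box processed later in the word order, of either color.'' Your stated claim, ``no two wires cross twice,'' is too weak: for $k=2$, $n=4$, $\mathbf{w}_Y=s_2s_1s_3s_2$, the filling of the $2\times2$ square with a $0$ only in the top-right box gives the subexpression $s_2\cdot 1\cdot 1\cdot 1$, which is not distinguished and whose diagram contains the forbidden pattern at that $0$, yet the one pair of wires that meets twice crosses only once --- its second meeting is an elbow. Your ``one direction'' argument instead treats the configuration ``elbow first, crossing later,'' which is not a failure at all: in the same square, a $0$ only in the bottom-left box gives the PDS $1\cdot 1\cdot 1\cdot s_2$ with no forbidden pattern, yet wires $2$ and $3$ meet at an elbow (the top-right $+$) and later at a crossing (the bottom-left $0$). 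I suspect the inversion comes from the reading order: $\mathbf{w}_Y$ lists the box labels from right to left, so $u_{(b-1)}$ is the product over boxes that come \emph{after} $b$ in the box order, not before. Note also that the filling with $0$'s on the diagonal and $+$'s off it is not a PDS because one pair of wires crosses \emph{twice with no intervening elbow meeting}, while the forbidden pattern sits at the off-diagonal boxes; so the passage from the correct wire condition to the pattern condition is genuinely nontrivial, and it is exactly the part your sketch defers to ``a direct check'' and ``a straightforward induction.'' As written, the proposal does not yet establish the theorem.
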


Note that with Postnikov's conventions, the three boxes in a forbidden pattern forms a backwards $L$ shape, which is the source of the name $\Le$-diagram.  

The type B construction is analogous.  The $k \times (n-k)$ rectangle is replaced with a staircase shape of size $n$, with boxes ordered and labeled as shown in Figure \ref{lec}.  Lower order ideals in the staircase shape give \emph{type $B$ Young diagrams}.  They correspond to reduced words for elements of $S_n^C$ which are minimal-length left coset representatives for $S_n^C/(S_n^C)_n$.  Let $Y$ be a type $B$ Young diagram.  As in the type $A$ case, the $\oplus$-diagrams of shape $Y$ correspond to subexpressions of some reduced word; such a diagram is a \emph{type $B$ \Le-diagram} if it represents a PDS.  Once again, we have a characterization of \Le-diagrams in terms of pattern avoidance.

\begin{figure}[ht]
\centering
\begin{tikzpicture}
\draw[step = 0.5 cm] (0,0) grid (1,1);
\draw (0,0) -- (0,-0.5) -- (0.5,-0.5) -- (0.5,0);
\draw (1,1) -- (1.5,1) -- (1.5,0.5) -- (1,0.5);
\node at (0.25,0.25) {2};
\node at (0.75,0.25) {3};
\node at (0.25,0.75) {4};
\node at (0.75,0.75) {5};
\node at (1.25,0.75) {6};
\node at (0.25,-0.25) {1};
\begin{scope}[xshift = 2 cm]
\draw[step = 0.5 cm] (0,0) grid (1,1);
\draw (0,0) -- (0,-0.5) -- (0.5,-0.5) -- (0.5,0);
\draw (1,1) -- (1.5,1) -- (1.5,0.5) -- (1,0.5);
\node at (0.25,0.25) {$s_2$};
\node at (0.75,0.25) {$s_3$};
\node at (0.25,0.75) {$s_1$};
\node at (0.75,0.75) {$s_2$};
\node at (1.25,0.75) {$s_3$};
\node at (0.25,-0.25) {$s_3$};
\end{scope}
\end{tikzpicture}
\caption{We construct type $B$ \protect\Le-diagrams inside a staircase shape.  The figure shows $n=3$.}
\label{lec}
\end{figure}
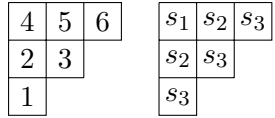

\begin{thm}[\cite{LW07}]
Let $Y$ be a type-$B$ Young diagram. An $\oplus$-diagram of type $B$ is a \emph{type $B$ \Le-diagram} if the following conditions hold:
\begin{enumerate}
\item No $0$ has both a $+$ in the same row to its left, and a $+$ in the same column below it.
\item No $0$ which lies in a diagonal box has a $+$ in the same row to its left.
\end{enumerate}
\end{thm}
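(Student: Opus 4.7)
The plan is to translate the two pattern-avoidance conditions on the $\oplus$-diagram into the statement that the associated subexpression $\mf{u}(D) \preceq \mf{w}_Y$ is a PDS. Given a type $B$ Young diagram $Y$, reading the labels of the boxes in the prescribed order produces a reduced word $\mf{w}_Y = s^C_{i_1}\cdots s^C_{i_m}$ for the minimal-length left coset representative $w_Y \in S_n^C/(S_n^C)_n$, with $m = |Y|$. A $\oplus$-filling $D$ encodes the subexpression $\mf{u}(D)$ obtained by replacing $s^C_{i_j}$ by the identity exactly when the $j^{\text{th}}$ box contains a $+$. By the PDS criterion, $\mf{u}(D)$ is positive distinguished iff $u_{(j-1)} < u_{(j-1)}s^C_{i_j}$ for every $j$, i.e.\ $s^C_{i_j}$ is never a right descent of $u_{(j-1)}$. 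So the task reduces to analyzing, box by box, when $s^C_{i_j}$ is a right descent of $u_{(j-1)}$.

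The first substantive step is to describe the partial product $u_{(j-1)}$ from the filling of the sub-diagram $Y_{<j}$ of boxes read before box $j$. Because most labels in $Y_{<j}$ commute with $s^C_{i_j}$, the right descent of $u_{(j-1)}$ along $s^C_{i_j}$ depends only on local data: the $+$'s to the left of box $j$ in its row, the $+$'s below box $j$ in its column, and (in type $C$ only) the $+$'s on the diagonal containing box $j$. The analysis is carried out by simplifying the relevant portion of the prefix via the appropriate braid relations, tracking which adjacent generators can swap past each other and which create a descent.

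For a box labeled by some $s^C_k$ with $k < n$, only 3-term braid relations come into play, and the argument mirrors Postnikov's original type $A$ proof: $s^C_{i_j}$ is a right descent of $u_{(j-1)}$ precisely when there is a $0$-box $b$ preceding $j$ with a $+$ to its left in the same row and a $+$ below it in the same column, which is exactly the forbidden pattern of condition~(1). For a diagonal box, whose label is the long simple reflection $s^C_n$, the relevant identity is the 4-term braid relation $s^C_{n-1}s^C_n s^C_{n-1} s^C_n = s^C_n s^C_{n-1} s^C_n s^C_{n-1}$. This stronger relation implies that a single $+$ to the left of the diagonal $0$-box already forces $s^C_n$ to be a right descent of $u_{(j-1)}$, and avoiding this is exactly condition~(2). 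Both directions of the ``iff'' follow from the same analysis: a forbidden pattern corresponds precisely to a failure of positivity at the corresponding box, and the absence of both patterns ensures that every $u_{(j-1)}$ has $s^C_{i_j}$ as a right ascent.

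The principal obstacle is the bookkeeping in the second step: tracking how $+$'s on the diagonal interact with $+$'s off the diagonal via the 4-term braid relation, and checking that no other local configuration of $+$'s can secretly produce a descent. Some care is needed when several $0$'s and $+$'s alternate between the diagonal and the adjacent subdiagonal, since iterated use of the 4-term relation lets $s^C_{n-1}$ and $s^C_n$ trade places in ways with no type $A$ analogue; verifying that these configurations collapse cleanly into the two patterns above is the heart of the argument. Once this case analysis is complete, the equivalence follows by induction on $|Y|$, adding one box at a time and invoking the fact that the PDS property is local in $j$.
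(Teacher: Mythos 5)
This theorem is quoted from \citep{LW07} and the paper gives no proof of it, so there is no in-paper argument to compare yours against; I can only assess your sketch on its own terms. Your overall strategy --- unwind the PDS condition letter by letter, so that the question becomes whether $s^C_{i_j}$ is a right descent of the partial product $u_{(j-1)}$, and then localize via commutation and braid relations --- is the right one and is in the spirit of the Lam--Williams proof. But as written the proposal has a genuine gap: the entire content of the theorem is the claim that the only configurations of $+$'s producing a descent are exactly the two forbidden patterns, and you assert this rather than prove it. You even identify the case analysis around the $4$-term braid relation as ``the heart of the argument'' and then defer it. A sketch that defers its heart is not yet a proof.

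Two specific points show that the deferred bookkeeping is where the real work lies. First, your localization is stated imprecisely for the paper's conventions: the word $\mf{w}_Y$ is obtained by reading the boxes in increasing order and listing the transpositions \emph{from right to left}, so the prefix $u_{(j-1)}$ at the letter of box $b$ is built from boxes read \emph{after} $b$, and the descent caused by a forbidden pattern at $b$ need not occur at $b$'s own letter. For $n=2$ the staircase gives $\mf{w}_Y=s^C_2s^C_1s^C_2$; the filling with $+$ only in box $2$ yields the subexpression $s^C_2\cdot 1\cdot s^C_2$, which fails positivity at the \emph{third} letter (box $1$), while condition (2)'s pattern sits at box $3$. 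Second, your dichotomy ``non-diagonal boxes see only $3$-term braid relations'' is too coarse: a non-diagonal box labeled $s^C_{n-1}$ sits next to diagonal boxes labeled $s^C_n$, and these interact through the $4$-term relation as well. Both issues are repairable, but they must be handled explicitly for the ``if and only if'' to go through.
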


\section{Bounded affine permutations and Bruhat intervals in type $C$}
\label{bound}

By \citep[Corollary 6.4]{Wil07}, the poset $\Q^C(2n)$ is graded, with rank function 
\[\rho(\langle u, w \rangle_n^C) = \frac{n(n+1)}{2} - (\ell^C(w)-\ell(^Cu)).\]  
Recall that $\Pi_{u,w}^C \subseteq \Lm(2n)$ has dimension $\ell^C(w)-\ell^C(u)$ \citep{KLS14}.  Since the dimension of $\Lm(2n)$ is $\frac{n(n+1)}{2}$ it follows that $\Pi_{u,w}^C$ has codimension $\frac{n(n+1)}{2}-(\ell^C(w)-\ell^C(u))$.  Hence the rank of an element of $\Q^C(2n)$ is the \emph{codimension} of the corresponding projected Richardson variety in $\Lm(2n)$.

Our goal is to show that $\Q^C(2n)$ is isomorphic to a subposet of $\Q(n,2n)$.  The image of $(S_n^C)_n$ in $S_{2n}$ is the group of permutations in $S_n \times S_n$ which satisfy Equation \ref{signed}.  We note that $w \in S_n^C$ is a left coset representative of $(S_n^C)_n$ of minimal (respectively, maximal) length if and only if $w$ is a left coset representative of $S_n \times S_n$ in $S_{2n}$ of minimal (respectively, maximal) length.  This follows easily from the discussion in \citep[Chapter 8]{BB05}.

\begin{prop}
Let $u,w \in S_n^C$, where we view $S_n^C$ as a subgroup of $S_{2n}$.  Then $u \leq_n^C w$ in $S_n^C$ if and only if $u \leq_n w$ in $S_{2n}$. 
\end{prop}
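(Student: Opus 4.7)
The plan is to reduce both $P$-Bruhat orders to ordinary Bruhat order via minimal-length coset representatives, then invoke the Bruhat embedding $S_n^C \hookrightarrow S_{2n}$. Given $u,w \in S_n^C$, write the length-additive decompositions $u = u^P u_P$ and $w = w^P w_P$ in $S_n^C$, with $u^P,w^P$ of minimal length in their respective cosets of $(S_n^C)_n$ and $u_P,w_P \in (S_n^C)_n$. By the remark preceding the statement, $u^P$ and $w^P$ are also of minimal length as representatives of the corresponding cosets of $S_n \times S_n$ in $S_{2n}$, and consequently the same factorizations are length-additive in $S_{2n}$ as well.

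Before running the reduction I would record the length compatibility $\ell^A(\sigma) = 2\,\ell^C(\sigma)$ for every $\sigma \in (S_n^C)_n$, which follows from a short inversion count: such a $\sigma$ preserves $[n]$ and $[n+1,2n]$, and the symmetry $\sigma(2n+1-a) = 2n+1-\sigma(a)$ pairs each inversion of $\sigma|_{[n]}$ with its mirror in $[n+1,2n]$ while forbidding cross-inversions. The consequence I will use repeatedly is that any length-additive factorization inside $(S_n^C)_n$ is also length-additive inside $S_n \times S_n$ (and vice versa).

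For the forward direction, suppose $u \leq_n^C w$. Lemma~\ref{stillless} applied in $S_n^C$ gives $w_P \leq_{(l)}^C u_P$, so $u_P = v w_P$ length-additively in $(S_n^C)_n$ for some $v \in (S_n^C)_n$. Set $u' := u^P v = u w_P^{-1}$. Then $u = u' w_P$ and $w = w^P w_P$ are length-additive in $S_n^C$, and combining the minimality of $u^P$ and $w^P$ in their $S_n \times S_n$-cosets with the length-doubling observation shows that both factorizations are length-additive in $S_{2n}$ as well. Applying the equivalence lemma for $P$-Bruhat intervals (stated just before Lemma~\ref{stillless}) in $S_n^C$ converts $u \leq_n^C w$ into $u' \leq_n^C w^P$. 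Since $w^P$ is minimal-length, Lemma~\ref{minrep} in $S_n^C$ reduces this to the plain Bruhat relation $u' \leq^C w^P$; the Bruhat embedding gives $u' \leq^A w^P$; and Lemma~\ref{minrep} in $S_{2n}$ upgrades this to $u' \leq_n w^P$. A final application of the equivalence lemma in $S_{2n}$ then recovers $u \leq_n w$.

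The reverse implication runs along exactly this chain with the roles of $S_n^C$ and $S_{2n}$ reversed. Starting from $u \leq_n w$, Lemma~\ref{stillless} in $S_{2n}$ yields $v$ with $u_P = v w_P$ length-additive in $S_{2n}$; since $u_P, w_P \in (S_n^C)_n$, one has $v = u_P w_P^{-1} \in (S_n^C)_n$, and the identity $\ell^A = 2\,\ell^C$ on $(S_n^C)_n$ pushes the length-additivity back into $S_n^C$. The main obstacle is essentially bookkeeping: I must check that every length-additivity hypothesis needed to apply the equivalence lemma and Lemma~\ref{minrep} transfers across the embedding, which is precisely what the length-doubling identity together with the coincidence of minimal-length coset representatives guarantees.
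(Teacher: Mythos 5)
Your proposal is correct, but the forward direction takes a different route from the paper's. The paper's forward implication is a short covering-relation argument: it suffices to verify a single cover $u \lessdot_n^C w$, which splits into two cases --- either $w = u s^A_{(a,a')}$, so that $u \lessdot_n w$ holds directly in $S_{2n}$, or $w = u s^A_{(a,b)} s^A_{(b',a')}$, in which case $u \lessdot_n u s^A_{(a,b)} \lessdot_n w$. This is quite short and does not touch the coset-representative machinery at all. You instead run the coset-representative reduction symmetrically in both directions (Lemma~\ref{stillless}, pass to a minimal-length top via Lemma~\ref{minrep}, cross the Bruhat embedding, and re-assemble with the equivalence lemma). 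Your version is uniform and works, but is noticeably heavier for the forward half than the paper's; it also silently uses that $u' \leq w^P$ holds before the equivalence lemma can be invoked, though this is the same implicit step the paper itself takes in its reverse direction, so it is not a defect peculiar to your argument.

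Your reverse direction matches the paper's, and in fact tightens its exposition at one point. The paper justifies the transfer of the length-additive factorizations from $S_{2n}$ to $S_n^C$ by simply citing the Bruhat embedding, but a Bruhat embedding by itself does not preserve length-additivity of arbitrary factorizations. Your explicit observation that $\ell^A(\sigma) = 2\,\ell^C(\sigma)$ for $\sigma \in (S_n^C)_n$ --- obtained by pairing each inversion of $\sigma|_{[n]}$ with its mirror in $[n+1,2n]$ and noting that cross-inversions cannot occur --- is exactly the fact being used, and spelling it out is a genuine clarification of a step the paper glosses over.
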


\begin{proof}For the forward direction, it is enough to show this when $u \lessdot_n^C w$ in $S_n^C$.  There are two cases to consider.  Recall that $a' = 2n+1-a$ for $a \in [2n]$.  Either $w = us^A_{(a,a')}$ for some $a \in [n]$, or $w = us^A_{(a,b)}s^A_{(b',a')}$ for some $a \in [n]$, $b \in [n+1,2n]$.  In the first case, the claim is immediate; in the second, it is easy to see that
\[u \lessdot_n us^A_{(a,b)} \lessdot_n us^A_{(a,b)}s^A_{(b',a')}\]
so $u \leq_n w$ as desired.

For the reverse direction, let $u,w \in S_n^C$, and suppose $u \leq_n w \in S_{2n}$. Then $w$ factors uniquely as $w^nw_n$ where $w_n \in S_n \times S_n$ and $w^n$ is a minimal-length coset representative of $S_n \times S_n$ in $S_{2n}$.  Moreover, we must have $w^n,w_n \in S_n^C$.
Let $u' = uw_n^{-1}$.  Then by Lemma \ref{stillless} we have $u' \leq _n w^n$, and $\langle u',w^n\rangle_n = \langle u,w \rangle_n$.
Now $w^n$ is a minimal-length coset representative for $S_n \times S_n$  in $S_{2n}$, and hence a minimal-length coset representative for $(S_n^C)^n$ in $S_n^C$.
Thus by Lemma \ref{minrep} we have $u' \leq_n^C w^n$.
Moreover, since the inclusion of $S_n^C$ into $S_{2n}$ is a Bruhat embedding, the factorizations 
$u = u'w_n$ and $w = w^nw_n$ are both length-additive in $S_n^C$.  Hence $u \leq_n^C w$ as desired.  
\end{proof}

Again, let $u,w,x,y \in S_n^C$.  It is clear that $\langle u,w \rangle_n = \langle x,y \rangle_n$ if and only if $\langle u,w \rangle_n^C = \langle x,y \rangle_n^C$.
Hence, $\Q^C(2n)$ is a subset of $\Q(n,2n)$.  Moreover, it is easy to see that if $\langle u, w \rangle_n^C \leq \langle x,y \rangle_n^C$, then $\langle u,w \rangle_n \leq \langle x,y \rangle_n$.
We claim that the converse holds, so that $\Q^C(2n)$ embeds as a sub-poset of $\Q(n,2n)$.

\begin{prop}\label{closures}
Suppose $\langle u,w \rangle_n \leq \langle x,y \rangle_n$ with $u,w,x,y \in S_n^C$.  Then $\langle u,w \rangle_n^C \leq \langle x,y \rangle_n^C$.   
\end{prop}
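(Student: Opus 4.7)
The plan is to reduce to canonical representatives and then exploit the $\sigma$-symmetry of $S_{2n}$ whose fixed subgroup is $S_n^C$. By the uniqueness lemma cited in the excerpt, each class in $\mathcal{Q}^C(2n)$ admits a unique representative $(u,w)$ with $w$ of minimal length in its $(S_n^C)_n$-coset. By the observation preceding the proposition, such a $w$ is simultaneously of minimal length in its $(S_n \times S_n)$-coset in $S_{2n}$, so the canonical representative of a class $\langle u,w \rangle_n^C \in \mathcal{Q}^C(2n)$ coincides with the canonical representative of $\langle u,w \rangle_n \in \mathcal{Q}(n,2n)$. I would fix canonical representatives $(u,w)$ and $(x,y)$ and work with these throughout.

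Unwinding the definition of $\leq$ in $\mathcal{Q}(n,2n)$, the hypothesis produces $z_1, z_2 \in S_n \times S_n$ with length-additive factorizations $uz_1, wz_1, xz_2, yz_2$ in $S_{2n}$ and a $P$-Bruhat containment $[xz_2, yz_2]_n \subseteq [uz_1, wz_1]_n$. The key step is to show that $z_1, z_2$ may be chosen in $(S_n^C)_n = S_n^C \cap (S_n \times S_n)$, so that the whole witness lies in $S_n^C$. For this I would use the involution $\sigma$ of $S_{2n}$ given by conjugation with the longest element $w_0^A$: its fixed subgroup is exactly $S_n^C$, it is a group automorphism of $S_{2n}$, and it preserves both the Bruhat order and the parabolic $S_n \times S_n$. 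Since $\sigma$ fixes $u, w, x, y$ pointwise, applying $\sigma$ to the given witness produces a second valid witness with $(z_1, z_2)$ replaced by $(\sigma(z_1), \sigma(z_2))$. One then combines the two witnesses, using the lattice structure of $S_n \times S_n$ under Bruhat order together with the length-additivity constraints, to produce a $\sigma$-invariant pair $(z_1', z_2') \in (S_n^C)_n$ that still realizes the containment.

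Once $z_1, z_2 \in (S_n^C)_n$, the entire witness $(uz_1, wz_1)$, $(xz_2, yz_2)$ lies in $S_n^C$. The previous proposition, which identifies $\leq_n^C$ on $S_n^C$ with the restriction of $\leq_n$ from $S_{2n}$, then lets us descend the $P$-Bruhat containment from $S_{2n}$ to $S_n^C$, yielding $[xz_2, yz_2]_n^C \subseteq [uz_1, wz_1]_n^C$ and hence $\langle u, w \rangle_n^C \leq \langle x, y \rangle_n^C$.

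The main obstacle is the construction of a $\sigma$-invariant witness. Naive combinations such as the product $z_i \sigma(z_i)$ need not be $\sigma$-invariant, and meets or joins in Bruhat order do not always exist. A rigorous construction will likely require either a careful case analysis on the $P$-Bruhat cover structure in $S_{2n}$, or an appeal to Deodhar decompositions via Lemma \ref{deoAC}, matching up type A distinguished subexpressions with type C ones and propagating the containment at the level of subword data rather than at the level of permutations.
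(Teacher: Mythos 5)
Your proposal correctly isolates the heart of the matter: given a witness pair $z_1, z_2 \in S_n \times S_n$ realizing the containment in $S_{2n}$, one must replace it with a witness lying in $(S_n^C)_n$. You also correctly observe that the involution $\sigma$ (conjugation by $w_0^A$) fixes $u,w,x,y$, is a Bruhat-order and parabolic-preserving automorphism, and therefore maps one witness to another. However, the step you flag as the ``main obstacle'' — combining $z_i$ and $\sigma(z_i)$ into a $\sigma$-invariant witness — is precisely where a proof is required, and you do not supply one. You state that naive products fail and that Bruhat meets/joins need not exist, and then offer two unexecuted alternatives (cover-structure analysis, or an appeal to Lemma \ref{deoAC}). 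Neither suggestion is developed far enough to close the gap; in particular Lemma \ref{deoAC} is a statement about Deodhar components of the flag varieties, not about how $P$-Bruhat witnesses in $S_{2n}$ interact with the subgroup $S_n^C$, and it is not clear how it would yield the needed order-theoretic fact.

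The paper's actual proof sidesteps the ``symmetrize an abstract witness'' problem entirely by constructing a concrete one. Taking $w$ Grassmannian and a representative $y'$ with $u \leq x' \leq_n y' \leq w$, one decomposes $y'$ as $y^n t s$ with $y^n$ Grassmannian, $t$ supported on $[n+1,2n]$, and $s$ supported on $[n]$; then one discards $t$ and replaces it by the mirror image $s'$ of $s$, setting $y^* = y^n s s'$. By construction $y^* \in S_n^C$, and $y^* \leq w$ is verified coordinate-by-coordinate via Lemma \ref{grass}: the conditions $y^*(i) \leq w(i)$ for $i \in [n]$ are inherited directly from $y'$, while those for $i \in [n+1,2n]$ follow by applying the $\sigma$-symmetry of $w$ and $y^n$ to reduce to the $[n]$ case. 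No lattice operation in Bruhat order is needed. The analogous $x^*$ is constructed the same way, and the last nontrivial piece, $u \leq x^*$, is established by a subword argument — choosing a reduced word for $x^*$ whose leftmost segment comes from a reduced word in $S_n^C$ and greedily extracting a reduced subword for $u$. Your sketch lacks both the explicit construction of $y^*, x^*$ and any substitute for this final subword step, so as written it does not constitute a proof.
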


\begin{proof}
We may assume for simplicity that $w$ is Grassmannian.  Since $\langle u,w \rangle_n \leq \langle x,y \rangle_n$ we have some representative $[x',y']_n$ of $\langle x,y \rangle_n$ such that 
\[u \leq x' \leq_n y' \leq w.\]
It suffices to show that we can choose $x',y' \in S_n^C$.  Write $y = y^ny_n$ where $y^n$ is Grassmannian and $y_n \in S_n \times S_n.$
Note that $y_n = t s$ where $t$ fixes $[n]$ and $s$ fixes $[n+1,2n]$.  
Let $s'$ be a permutation in $S_n \times S_n$ which fixes $[n]$, such that $s s' \in S_n^C$.  Since $w$ is Grassmannian, for $v,w \in S_{2n}$, we have $v \leq w$ if and only if $v(i) \leq w(i)$ for $i \in [n]$, while $v(i) \geq w(i)$ for $i \in [n+1,2n]$.  By symmetry, since $w \in S_n^C$, we have $y^* = y^n s s' \leq w \in S_{2n}$.  Moreover, since $x \leq_n y$ with $x \in S_n^C$, we have $x = x^n r r' ss'$ where $r$ fixes $[n+1,2n]$, $r'$ fixes $[n]$, $rr' \in S_n^C$, and $rs$ is a length-additive factorization.  Let 
\[x^* = x^n rr' ss'\]
Then $x^* \leq_n y^*$, and we have
\[\langle x^*, y^*\rangle_n = \langle x,y\rangle_n.\]

It remains to show that $u \leq x^*$. Let $\mf{x}$ be a reduced word for $x$ in $S_{2n}$ whose leftmost portion corresponds to a reduced word $\mf{q}$ of $x_n rr'$ under the embedding $S_n^C \hookrightarrow S_{2n}$.  Consider the lexicographically leftmost subexpression $\mf{u}$ for $u$ in $\mf{x}$.  As in the construction of PDS's, we choose the factors of $\mf{u}$ greedily, working from the left.  Since $\mf{q}$ corresponds to a reduced word in $S_n^C$, it follows that the portion of $\mf{u}$ which is contained in $\mf{q}$ does also.  The remaining factors in $\mf{u}$ multiply to give some element of $z \in (S_n^C)_n$ such that $st$ contains a reduced word for $z$ in $S_{2n}$.  Let $\mf{s}$ be reduced word for $s$, and $\mf{s'}$ a reduced word for $s'$.  Then there is a reduced subexpression for $z$ in the reduced word $\mf{ss'}$.  Hence we can find a reduced word for $u$ in the reduced word $\mf{qss'}$ of $x^*.$  This completes the proof.
\end{proof}

\begin{cor}The poset $\mathcal{Q}^C(2n)$ embeds as a sub-poset of $\mathcal{Q}(n,2n)$.
\end{cor}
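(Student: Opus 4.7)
The plan is to assemble the corollary directly from the two facts established just before it, namely (i) the identification of $P$-Bruhat intervals in $S_n^C$ with certain $P$-Bruhat intervals in $S_{2n}$, and (ii) Proposition \ref{closures}. There is essentially nothing new to compute; the work is in checking that the natural map is well-defined, injective, and order-preserving in both directions.

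First I would spell out the map. Given $\langle u, w \rangle_n^C \in \mathcal{Q}^C(2n)$, send it to $\langle u, w \rangle_n \in \mathcal{Q}(n,2n)$, where we view $u,w$ as elements of $S_{2n}$ via the Bruhat embedding $S_n^C \hookrightarrow S_{2n}$. By the previous proposition in this section, $u \leq_n^C w$ in $S_n^C$ if and only if $u \leq_n w$ in $S_{2n}$, so $[u,w]_n^C$ is a valid $P$-Bruhat interval in both settings. The map is well-defined and injective because of the parenthetical remark immediately preceding the corollary: $\langle u,w\rangle_n^C = \langle x,y\rangle_n^C$ iff there exists $z \in (S_n^C)_n$ with $x = uz$ and $y = wz$, both length-additive, and since $(S_n^C)_n = (S_n \times S_n) \cap S_n^C$ and the embedding $S_n^C \hookrightarrow S_{2n}$ is Bruhat, this is equivalent to $\langle u,w\rangle_n = \langle x,y\rangle_n$.

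Next I would verify order preservation. The forward direction, that $\langle u,w\rangle_n^C \leq \langle x,y\rangle_n^C$ implies $\langle u,w\rangle_n \leq \langle x,y\rangle_n$, is immediate: any representative $[x',y']_n^C \subseteq [u',w']_n^C$ with $u',w',x',y' \in S_n^C$ is, by the earlier proposition, also an inclusion of $P$-Bruhat intervals in $S_{2n}$. The reverse direction is exactly the content of Proposition \ref{closures}: if $\langle u,w\rangle_n \leq \langle x,y\rangle_n$ with all four elements in $S_n^C$, then there are representatives $[x^*,y^*]_n$ of $\langle x,y\rangle_n$ with $x^*,y^* \in S_n^C$ and $u \leq x^* \leq_n y^* \leq w$ in $S_{2n}$; invoking the previous proposition once more translates this into $u \leq_n^C x^* \leq_n^C y^* \leq_n^C w$ in $S_n^C$, which is the desired relation $\langle u,w\rangle_n^C \leq \langle x,y\rangle_n^C$.

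There is no real obstacle here, since both the equivalence-relation matching and the order comparison reduce cleanly to prior results. The only point that requires care is invoking Proposition \ref{closures} on the correct representatives rather than on $(u,w)$ and $(x,y)$ themselves, and confirming that the representatives produced there indeed lie in $S_n^C$; but this is built into the statement of the proposition. The corollary then follows in a short paragraph.
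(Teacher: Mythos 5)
Your proof is correct and follows essentially the same route as the paper: the corollary is just a summary of the discussion preceding Proposition~\ref{closures} (the set-level identification of $\Q^C(2n)$ inside $\Q(n,2n)$, and the forward order-preservation) together with Proposition~\ref{closures} itself supplying the reverse implication. One small slip worth flagging: in the last paragraph you claim that ``invoking the previous proposition once more translates $u \leq x^* \leq_n y^* \leq w$ into $u \leq_n^C x^* \leq_n^C y^* \leq_n^C w$''; but the outer relations $u \leq x^*$ and $y^* \leq w$ are in the ordinary Bruhat order, not the $P$-Bruhat order, so that proposition does not upgrade them to $\leq_n^C$ (what you actually get is $u \leq x^*$ and $y^* \leq w$ in $S_n^C$, via the Bruhat embedding). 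This is harmless here, since Proposition~\ref{closures} already delivers the conclusion $\langle u,w\rangle_n^C \leq \langle x,y\rangle_n^C$ directly, but it is worth being careful about which of the two orders is in play at each step.
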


These results give a compact description of projected Richardson varieties in $\Lm(2n)$.

\begin{prop}
Let $\mathring{\Pi}_{u,w}^C$ be an open projected Richardson variety in $\Lm(2n)$, where $u, w \in S_n^C$ with $u \leq_n^C w$.  Then $u \leq_n w \in S_{2n}$ and set-theoretically we have 
\[\mathring{\Pi}_{u,w}^C = \mathring{\Pi}_{u,w}^A \cap \Lm(2n)\]
where $\mathring{\Pi}_{u,w}^A$ is the open positroid variety corresponding to $\langle u,w \rangle_n$ in $\Gr(n,2n)$.

The set-theoretic intersection of an open positroid variety $\mathring{\Pi}_{u,w}^A$ with $\Lm(2n)$ is empty unless $\langle u,w \rangle_n$ has a representative $\langle u',w'\rangle_n$ where $u',w' \in S_n^C$, in which case the intersection is $\mathring{\Pi}_{u',w'}^C$.

Finally, the closure partial order on projected Richardson varieties in $\Lm(2n)$ is induced by the closure partial order on positroid varieties in the obvious way.  In particular, for $u \leq_n^C w$, we have
\[\Pi_{u,w}^C = \Pi_{u,w}^A \cap \Lm(2n).\]

\label{nicepos} 
\end{prop}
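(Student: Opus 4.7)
The plan is to establish the three claims in order, using as the key inputs the preceding proposition (which gives $u \leq_n w$ whenever $u \leq_n^C w$ with $u,w \in S_n^C$) and the corollary that $\mathcal{Q}^C(2n)$ embeds as a sub-poset of $\mathcal{Q}(n,2n)$. Since this embedding is given by an honest map of underlying sets, $\langle u,w \rangle_n^C \mapsto \langle u,w \rangle_n$, it is in particular injective on equivalence classes, a fact I will use repeatedly.

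For the first identity, one containment is essentially immediate from the set-theoretic identity $\mathring{R}_{u,w}^C = \mathring{R}_{u,w}^A \cap (\Sp(2n)/B_+^{\sigma})$ established earlier and the commutative diagram of Figure \ref{diag}: any $V \in \mathring{\Pi}_{u,w}^C$ lifts via the projection $\Sp(2n)/B_+^{\sigma} \to \Lm(2n)$ (an isomorphism on $\mathring{R}_{u,w}^C$ since $u \leq_n^C w$) to some $V_\bullet \in \mathring{R}_{u,w}^C \subseteq \mathring{R}_{u,w}^A$, and applying $\pi_n$ places the same point of $\Gr(n,2n)$ into $\mathring{\Pi}_{u,w}^A \cap \Lm(2n)$. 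For the reverse containment, I would exploit the partition of $\Lm(2n)$ by its open projected Richardson varieties and the partition of $\Gr(n,2n)$ by its open positroid varieties. Given $V$ in the intersection, $V$ belongs to a unique $\mathring{\Pi}_{u',w'}^C$; applying the forward direction to $(u',w')$ yields $V \in \mathring{\Pi}_{u',w'}^A$, and the positroid partition forces $\langle u',w' \rangle_n = \langle u,w \rangle_n$. Injectivity of $\mathcal{Q}^C(2n) \hookrightarrow \mathcal{Q}(n,2n)$ then upgrades this to $\langle u',w' \rangle_n^C = \langle u,w \rangle_n^C$, so $V \in \mathring{\Pi}_{u,w}^C$ as required. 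The second claim follows from the same argument: any point of a nonempty intersection $\mathring{\Pi}_{u,w}^A \cap \Lm(2n)$ lands in some $\mathring{\Pi}_{u',w'}^C$, which exhibits the required representative of $\langle u,w \rangle_n$ in $S_n^C$, and then the first claim identifies the full intersection with $\mathring{\Pi}_{u',w'}^C$.

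For the closure statement, I would decompose $\Pi_{u,w}^A = \bigsqcup_{\langle u',w' \rangle_n \leq \langle u,w \rangle_n} \mathring{\Pi}_{u',w'}^A$ using the stratification description of \citep{KLS13}, then intersect with $\Lm(2n)$. By the second claim, each nonempty term becomes $\mathring{\Pi}_{u'',w''}^C$ for some representative $(u'',w'')$ of that class with $u'',w'' \in S_n^C$; Proposition \ref{closures} then forces $\langle u'',w'' \rangle_n^C \leq \langle u,w \rangle_n^C$, so every such cell lies in $\Pi_{u,w}^C$, giving $\Pi_{u,w}^A \cap \Lm(2n) \subseteq \Pi_{u,w}^C$. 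The opposite containment is immediate: $\mathring{\Pi}_{u,w}^C \subseteq \Pi_{u,w}^A \cap \Lm(2n)$, the right-hand side is closed in $\Gr(n,2n)$ because $\Lm(2n)$ is, and passing to closures gives $\Pi_{u,w}^C \subseteq \Pi_{u,w}^A \cap \Lm(2n)$. I do not foresee any serious obstacle, as all the combinatorial machinery is already in place; the one spot meriting care is the reverse inclusion in part one, where injectivity of the $\mathcal{Q}$-embedding is essential — without it, a point in $\mathring{\Pi}_{u,w}^A \cap \Lm(2n)$ could a priori belong to a genuinely different projected Richardson stratum that happens to share the same positroid class.
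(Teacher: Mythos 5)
Your argument follows essentially the same route as the paper: one containment $\mathring{\Pi}_{u,w}^C \subseteq \mathring{\Pi}_{u,w}^A \cap \Lm(2n)$ comes directly from $\mathring{R}_{u,w}^C = \mathring{R}_{u,w}^A \cap \Sp(2n)/B_+^{\sigma}$ and $\pi_n$ being an isomorphism on $\mathring{R}_{u,w}$, and equality then follows by comparing the two stratifications of $\Lm(2n)$ using injectivity of $\mathcal{Q}^C(2n) \hookrightarrow \mathcal{Q}(n,2n)$. The paper phrases the second step slightly differently, observing that $\Lm(2n) = \bigsqcup_{\langle u,w\rangle_n^C} \mathring{\Pi}_{u,w}^A \cap \Lm(2n)$ is a partition that refines-in-one-direction the partition $\Lm(2n)=\bigsqcup_{\langle u,w\rangle_n^C}\mathring{\Pi}_{u,w}^C$, so the pieces must agree; this is the same idea you make explicit with ``injectivity is essential,'' which you are right to highlight.

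One sign error in the closure step needs fixing. Since the poset $\mathcal{Q}(n,2n)$ is isomorphic to positroid varieties under \emph{reverse} inclusion, the stratification of a closed positroid variety reads
\[
\Pi_{u,w}^A = \bigsqcup_{\langle u,w\rangle_n \,\leq\, \langle u',w'\rangle_n} \mathring{\Pi}_{u',w'}^A,
\]
the inequality running the opposite way from what you wrote. With the corrected hypothesis $\langle u,w\rangle_n \leq \langle u'',w''\rangle_n$, Proposition \ref{closures} yields $\langle u,w\rangle_n^C \leq \langle u'',w''\rangle_n^C$, which under reverse inclusion gives $\Pi_{u'',w''}^C \subseteq \Pi_{u,w}^C$, and hence the intermediate conclusion you wanted, that every nonempty stratum of $\Pi_{u,w}^A \cap \Lm(2n)$ lies inside $\Pi_{u,w}^C$. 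As you wrote it, Proposition \ref{closures} applied to $\langle u'',w''\rangle_n \leq \langle u,w\rangle_n$ gives $\Pi_{u'',w''}^C \supseteq \Pi_{u,w}^C$, which does not support the stated conclusion. Once the inequality is flipped consistently the argument is sound, and in fact spells out in more detail what the paper dispatches in one sentence by noting that both poset orders are the reverses of the respective closure orders, so the poset embedding of Proposition \ref{closures} directly transfers the closure relation.
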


\begin{proof}
Let $\langle u,w \rangle_n^C \in \Q^C(2n)$.  Recall that we have
\[\mathring{R}_{u,w}^C= \mathring{R}_{u,w}^A \cap \Sp(2n)/B_+^{\sigma}\]
since $u \leq w \in S_n^C$.  Since $u \leq_n^C w$, we have $u \leq_n w$, and the projection $\pi_n:\mathcal{F}\ell(2n) \rightarrow \Gr(n,2n)$ carries $\mathring{R}_{u,w}$ isomorphically to $\mathring{\Pi}_{u,w}^A$.  It follows that $\mathring{\Pi}_{u,w}^C \subseteq \mathring{\Pi}_{u,w}^A \cap \Lm(2n)$.

Since open projected Richardson varieties stratify $\Lm(2n)$, while open positroid varieties stratify $\Gr(n,2n)$, we have
\[\Lm(2n) = \bigsqcup_{\langle u,w \rangle_n^C \in \mathcal{Q}^C(2n)} \mathring{\Pi}_{u,w}^A \cap \Lm(2n)\]
which in turn implies
\[\mathring{\Pi}^C_{u,w} = \mathring{\Pi}^A_{u,w} \cap \Lm(2n)\]
for all $\langle u,w \rangle_n^C$. 

Since the open positroid varieties of the form $\mathring{\Pi}^A_{u,w}$ for $\langle u,w \rangle_n^C$ cover $\Lm(2n)$, it follows that $\mathring{\Pi}^A_{x,y} \cap \Lm(2n)$ is empty if $\langle x,y \rangle_n$ is not contained in the image of the embedding $\mathcal{Q}^C(2n) \hookrightarrow \mathcal{Q}(n,2n)$.

The final statement follows from Lemma \ref{closures}, since the partial orders on $\Q^C(2n)$ and $\Q(n,2n)$ give the reverse of the closure partial orders on projected Richardson varieties in $\Lm(2n)$ and $\Gr(n,2n)$, respectively.
\end{proof}

We now define a type $C$ analog of the poset $\Bd(k,n)$.  Recall that we have a well-defined isomorphism $\Q(k,n) \rightarrow \Bd(k,n)$ given by $\langle u, w \rangle_k \mapsto f_{\langle u,w \rangle_k}$ where 
\[f_{\langle u,w \rangle_k} = ut_{[k]}w^{-1}.\]

\begin{defn}
The set $\Bd^C(2n)$ of \emph{type C bounded affine permutations} is the image of $\Q^C(2n)$ under the map $\mathcal{Q}(n,2n) \rightarrow \Bd(n,2n)$.
\end{defn}

Recall that bounded affine permutations are elements of the extended affine Weyl group of $\Gl(n)$.  We show that a similar statement holds for $\Bd^C(2n)$.  A \emph{symplectic similitude} $A \in \Gl(2n)$ is a linear transformation such that for all $v,w \in \mathbb{C}^{2n}$ we have
\[\langle Av, Aw \rangle = \mu \langle v,w \rangle\]
for a fixed nonzero scalar $\mu$. Let $\text{GSp}(2n)$ denote the group of symplectic similitudes, which is a reductive group of type $C_n$.

 The extended affine Weyl group $\widetilde{S}_n^C$ of $\text{GSp}(2n)$ may be realized as a subgroup of $\widetilde{S}_{2n}$, and the inclusion is a Bruhat embedding.  Concretely, the extended affine Weyl group of $\text{GSp}(2n)$ consists of all affine permutations $wt$, with $w \in S_n^C$ and $t = (a_1,\ldots,a_{2n})$ a translation element satisfying 
\[a_i+a_{i'} = a_j + a_{j'}\]
for all $1 \leq i,j \leq n$.
For details, see \citep{KR99}.
In particular, the Bruhat order on $\widetilde{S}_n^C$ induces a partial order on $\Bd^C(2n)$ which agrees with the partial order inherited from $\Bd(n,2n)$.

Each element of $f \in \Bd^C(2n)$ satisfies
\[f(2n+a-1) = 4n+1 - f(a)\]
for all $a \in [2n]$.
We claim that every $f \in \Bd(n,2n)$ satisfying this condition must in fact be contained in $\Bd^C(2n)$.  
For this, it suffices to show that each such bounded affine permutation has the form $f = ut_{[n]}w^{-1}$
where $w$ is Grassmannian, and $u,w \in S_{n}^C$.
Certainly, we know that $f = ut_{[n]}w^{-1}$ for some $u,w \in S_{2n}$, with $w$ Grassmannian.  Since
\[f(2n+a-1) = 4n+1 - f(a)\]
for each $a,$ exactly one of each pair $\{a,a'\}$ must be in $w^{-1}[n]$.  From this, it follows that $w \in S_{n}^C$.  But this, in turn, forces $u \in S_{n}^C$, since
$uw^{-1}(a') = uw^{-1}(a)'$
for all $a \in [n]$. 
We now have the following.

\begin{prop}
The poset $\Bd^C(2n)$ consists of all bounded affine permutations $f \in \Bd(n,2n)$ which satisfy
\[f(2n+a-1) = 4n+1 -f(a)\]
for all $a \in [2n]$.
\end{prop}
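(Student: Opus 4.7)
The plan is to establish both inclusions. For the forward direction, given $f = f_{u,w} = ut_{[n]}w^{-1}$ with $u,w \in S_n^C$ (and $w$ Grassmannian, taken as the canonical representative), I would verify the symmetry by direct computation. Split $[2n]$ according to whether $a \in w^{-1}([n])$ or $a \in w^{-1}([n+1,2n])$, so $f(a) = u(w^{-1}(a)) + 2n$ in the first case and $f(a) = u(w^{-1}(a))$ in the second. The identity $w(b') = w(b)'$ for $w \in S_n^C$ immediately gives the corresponding identity for $w^{-1}$, so $a \in w^{-1}([n])$ exactly when $a' \in w^{-1}([n+1,2n])$. Applying $u(c') = u(c)'$ then yields $f(a) + f(a') = 4n+1$.

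For the reverse direction, I essentially formalize the argument sketched in the paragraph preceding the proposition. Let $f \in \Bd(n,2n)$ satisfy $f(a) + f(a') = 4n+1$ for all $a \in [2n]$. Factor $f$ uniquely as $ut_{[n]}w^{-1}$ with $w$ Grassmannian. The bound condition $a \leq f(a) \leq a + 2n$ together with the symmetry shows that $f(a) > 2n$ implies $f(a') \leq 2n$, so the set $J := w^{-1}([n]) = \{a \in [2n] \mid f(a) > 2n\}$ contains exactly one element of each pair $\{a,a'\}$, i.e.\ $J = R(J)$ in the notation of the preliminaries. Since a Grassmannian permutation of type $(n,2n)$ is determined by its inverse image of $[n]$, this symmetry of $J$ is exactly the condition $w(a') = w(a)'$, so $w \in S_n^C$. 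Then the symmetry $f(a) + f(a') = 4n+1$, combined with the formulas for $f(a)$ in the two cases above, forces $(uw^{-1})(a') = (uw^{-1}(a))'$ for all $a$; since $w \in S_n^C$, this gives $u \in S_n^C$. Hence $\langle u, w\rangle_n$ lies in the image of $\mathcal{Q}^C(2n) \hookrightarrow \mathcal{Q}(n,2n)$ and so $f \in \Bd^C(2n)$ by definition.

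The calculations are routine and the argument is short. The only point requiring care is the step passing from the symmetry $J = R(J)$ of the set $J = w^{-1}([n])$ to the signed-permutation property $w \in S_n^C$; this is a small combinatorial check using the fact that a Grassmannian permutation is uniquely recovered from its inverse image of $[n]$. Once $w \in S_n^C$ is established, the symmetry of $u$ follows formally from the symmetry of $f$, and the proposition drops out of the definition of $\Bd^C(2n)$ as the image of $\mathcal{Q}^C(2n)$ under $\langle u,w\rangle_n \mapsto f_{u,w}$.
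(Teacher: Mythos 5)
Your proof is correct and follows essentially the same route as the paper: both directions rest on the factorization $f = ut_{[n]}w^{-1}$ with $w$ Grassmannian, the identification $w^{-1}([n]) = \{a : f(a) > 2n\}$ together with the symmetry of $f$ to conclude $w \in S_n^C$, and then the symmetry of $uw^{-1}$ to get $u \in S_n^C$. Your write-up just fills in details the paper leaves implicit (notably the forward-direction computation and the combinatorial check that $J = R(J)$ yields $w \in S_n^C$), and you implicitly correct the paper's typo $f(2n + a - 1)$, which should read $f(2n + 1 - a) = f(a')$.
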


Let $\ell^{\widetilde{C}}$ denote the Bruhat order on $S_n^{\widetilde{C}}$.  For $f \in \widetilde{S}_{2n}$, we define an equivalence relation on inversions of $f$ by setting two inversions $(a,b)$ and $(c,d)$ equivalent if either 
\[(c,d) = (a+2rn,b+2rn)\]
for some $r \in \mathbb{Z}$ or 
\[(c,d) = (2n+1-a,2n+1-b).\]  We call the resulting equivalence classes \emph{type $\widetilde{C}$ inversions.}  The following is an immediate consequence of the discussion in \citep[Chapter 8]{BB05}.  

\begin{prop}
Let $f$ be a bounded affine permutation in $\Bd^C(2n)$.  Then $\ell^{\widetilde{C}}(f)$ is the number of type $\widetilde{C}$ inversions of $f$.
Alternatively $\ell^{\widetilde{C}}(f)$ is the number of type $\widetilde{A}$ inversions of $f$ which have a representative of one of the following forms, for $n+1 \leq i \leq j \leq 2n$, and $r$ a positive integer: $(i,j)$, $(i',j)$, $(i,j+2rn)$, $(i,j'+2rn)$, or $(j',i+2rn)$.
\end{prop}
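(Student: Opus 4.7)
The plan is to reduce the statement to the standard Coxeter-theoretic fact that the length of an element equals the number of positive roots it sends to negative, combined with the realization of $\widetilde{S}_n^C$ as a folding of $\widetilde{S}_{2n}$ under the involution induced by $a \mapsto 2n+1-a$ on $[2n]$. The first assertion is essentially a translation through this folding; the second assertion is a combinatorial enumeration of representatives.

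For the first assertion, I start from the analogous type $\widetilde{A}$ statement: for $f \in \widetilde{S}_{2n}^k$, the length $\ell^{\widetilde{A}}(f)$ equals the number of type $\widetilde{A}$ inversions of $f$, which are the equivalence classes of ordinary inversions under the translation $(a,b) \sim (a+2rn, b+2rn)$. This is \cite[Theorem 5.9]{KLS13}, quoted earlier in the paper. Next, the inclusion $\widetilde{S}_n^C \hookrightarrow \widetilde{S}_{2n}$ corresponds to a Dynkin diagram folding, and the discussion in \cite[Chapter 8]{BB05} identifies positive real affine roots of $\widetilde{C}_n$ with $\sigma$-orbits on positive real affine roots of $\widetilde{A}_{2n-1}$, where $\sigma$ is the involution $a \mapsto 2n+1-a$. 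Under the identification of roots with inversions, this $\sigma$-action on type $\widetilde{A}$ inversions is $(a,b) \mapsto (2n+1-b, 2n+1-a)$ (after reordering so that the first coordinate is smaller). Because $f$ satisfies the type $C$ symmetry $f(2n+a-1) = 4n+1-f(a)$, the map $f$ is $\sigma$-equivariant on pairs, so either every representative in a $\sigma$-orbit is an inversion of $f$ or none is. This identifies $\ell^{\widetilde{C}}(f)$ with the number of type $\widetilde{C}$ inversions.

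For the second assertion, I produce a system of canonical representatives: for every type $\widetilde{C}$-orbit of inversions, exactly one of its $\widetilde{A}$-classes should contain a pair of the listed shape. I plan to carry this out by a case analysis on the $\widetilde{A}$-representative $(a,b)$ with $a \in [1, 2n]$ and $b > a$, splitting on whether $b \leq 2n$ (finite case) or $b > 2n$ with $b = c + 2rn$, $c \in [1, 2n]$, $r \geq 1$ (affine case), and further on which of the halves $[1,n]$ and $[n+1,2n]$ each coordinate lies in. In each case, either the representative is already in the list or one application of $\sigma$ followed by an $\widetilde{A}$-translation brings it into one of the listed shapes. This packages together with a corresponding non-redundancy check: within each $\widetilde{C}$-orbit, a representative in one of the listed shapes is unique up to $\widetilde{A}$-translation.

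The main obstacle will be the bookkeeping in Step 3: the translation equivalence and the $\sigma$-involution interact nontrivially, and one must verify both completeness (every type $\widetilde{C}$-orbit hits the list) and non-redundancy (no orbit is counted twice). In particular, the separation into the three affine shapes $(i, j+2rn)$, $(i, j'+2rn)$, $(j', i+2rn)$ reflects the three essentially different ways in which the pair $(\text{small}, \text{large}+2rn)$ can meet the two halves of $[1, 2n]$ after the $\sigma$-involution is used to normalize. Checking carefully that each affine $\widetilde{C}$-orbit has exactly one representative of these forms, together with the corresponding analysis for the finite forms $(i,j)$ and $(i',j)$, will require explicit inversion formulas $\sigma(a,b) = (2n+1-b, 2n+1-a)$ combined with integer translations, and is the only step that goes beyond formal manipulations with root systems.
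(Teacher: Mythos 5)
The paper offers no argument for this proposition whatsoever: it is introduced only as ``an immediate consequence of the discussion in [BB05, Chapter 8],'' with no proof environment following. So your proposal is supplying a proof that the paper does not.

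Your approach to the first assertion is the natural one and is indeed what the cited reference supports: realize $\widetilde{S}_n^C$ inside $\widetilde{S}_{2n}$ as the fixed-point subgroup of the diagram automorphism $\sigma$ induced by $a \mapsto 2n+1-a$, use that the $\widetilde{C}_n$-length of a fixed element counts $\sigma$-orbits of $\widetilde{A}_{2n-1}$-inversions, and note that the defining symmetry $f(2n+1-a) = 4n+1-f(a)$ makes $\sigma$ act on the set of inversions of $f$. (The paper's display $f(2n+a-1)$ is evidently a typo for $f(2n-a+1)=f(2n+1-a)$.) All of that is correct as you outline it.

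The second assertion is where you should be careful, because the planned case analysis, if carried out, will \emph{not} verify the list exactly as printed. With the uniform constraint $n+1 \le i \le j \le 2n$, no listed shape can represent a $\sigma$-orbit whose normalized affine member $(a, c+2rn)$ has $a, c \in [n+1,2n]$ and $a > c$: the pair itself fails $(i, j+2rn)$ because $i \le j$ is imposed, and its $\sigma$-image $(c', a'+2rn)$ has both entries in $[1,n]$, fitting none of the three affine shapes. A concrete instance with $n=2$: take $f \in \Bd^C(4)$ with $f(1)=1$, $f(2)=6$, $f(3)=3$, $f(4)=8$ (the point stratum with unique nonvanishing coordinate $\Delta_{\{2,4\}}$). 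One checks $\ell^{\widetilde{C}}(f) = 3$, with type $\widetilde{C}$ inversion classes $\{(2,3)\}$, $\{(4,5)\}$, and $\{(4,7),(2,5)\}$. The first two have list representatives $(i',j)=(2,3)$ and $(i,j'+2rn)=(4,5)$, but $(4,7)$ requires $(i,j+2rn)$ with $i=4 > j=3$, and $(2,5)$ fits none of the shapes at all, so the list produces $2$, not $3$. Most likely the last form is a typo for $(j, i+2rn)$ (no prime, with $i<j$), which would exactly supply the missing LL-with-$a>c$ case; alternatively, the constraint $i \le j$ should be dropped for $(i, j+2rn)$, or $r$ should be allowed to range over all nonzero integers. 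Your plan is methodologically sound, but be prepared for it to lead you to a correction of the paper's stated list rather than a confirmation of it.
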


We note the type $C$ analog of Theorem 3.16 from \citep{KLS14}.  The proof is entirely analogous to the type $A$ version.

\begin{prop}
If $f  = ut_{[n]}w^{-1} \in \Bd^C(2n)$, then $f$ has length 
\[\frac{n(n+1)}{2} - (\ell^C(w) - \ell^C(u))\]
so the bijection from $\Q^C(2n)$ to $\Bd^C(2n)$ is graded.
The codimension of $\Pi_f^C$ in $\Lm(2n)$ is equal to $\ell^{\widetilde{C}}(f).$

\end{prop}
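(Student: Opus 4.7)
The plan is to reduce to the type $A$ result already on hand (KLS14 Theorem 3.16) and then carefully account for the extra $C$-symmetry. Two auxiliary length comparisons must be set up: one comparing $\ell^A$ to $\ell^C$ for elements of the finite Weyl group $S_n^C$, and a parallel one comparing $\ell^{\widetilde{A}}$ to $\ell^{\widetilde{C}}$ for elements of $\Bd^C(2n)$.

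First I will apply Theorem~3.16 of \citep{KLS14} to $f = u t_{[n]} w^{-1}$ viewed as an element of $\Bd(n,2n)$; since $u \leq_n w$ in $S_{2n}$ by Proposition~\ref{nicepos}, this gives $\ell^{\widetilde{A}}(f) = n^2 - (\ell^A(w) - \ell^A(u))$. Next, for any $x \in S_n^C$, I will use the Bruhat embedding $S_n^C \hookrightarrow S_{2n}$ together with the identification $s_i^C = s_i^A s_{2n-i}^A$ for $i < n$ and $s_n^C = s_n^A$ to conclude that $\ell^A(x) = 2\ell^C(x) - p_x$, where $p_x$ is the number of occurrences of $s_n^C$ in any reduced expression for $x$. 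A direct check shows $p_x = |x([n]) \cap [n+1,2n]|$, so $p_x$ is well-defined.

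In parallel, for $f \in \Bd^C(2n)$ the preceding proposition describes $\ell^{\widetilde{C}}(f)$ as a count of type $\widetilde{C}$ inversion classes, where the $\widetilde{C}$ equivalence enlarges the $\widetilde{A}$ one by pairing $(a,b) \sim (2n+1-a,2n+1-b)$. Classes split into those of size two under this involution (``paired'', contributing $2$ to $\ell^{\widetilde{A}}(f)$ and $1$ to $\ell^{\widetilde{C}}(f)$) and those of size one (``self-symmetric'', contributing $1$ to each). Denoting by $q_f$ the number of self-symmetric classes, one obtains $\ell^{\widetilde{A}}(f) = 2\ell^{\widetilde{C}}(f) - q_f$. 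Combining with the type $A$ formula and substituting $\ell^A(x) = 2\ell^C(x) - p_x$ yields
\[
\ell^{\widetilde{C}}(f) = \tfrac{1}{2}\bigl(n^2 + q_f + p_w - p_u\bigr) - (\ell^C(w) - \ell^C(u)).
\]
The first claim of the proposition will follow once I verify the combinatorial identity
\[
q_f = n + p_u - p_w,
\]
after which $\tfrac{1}{2}(n^2 + q_f + p_w - p_u) = \tfrac{n(n+1)}{2}$. The codimension statement is then immediate: $\dim \Lm(2n) = \tfrac{n(n+1)}{2}$ and $\dim \mathring{\Pi}^C_{u,w} = \ell^C(w) - \ell^C(u)$ by Section~\ref{back}, so $\mathrm{codim}\,\Pi_f^C = \ell^{\widetilde{C}}(f)$.

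The main obstacle is the identity $q_f = n + p_u - p_w$. A self-symmetric $\widetilde{C}$-class of inversions of $f$ is represented by a pair $(a,b)$ with $b \equiv 2n+1-a \pmod{2n}$, so it corresponds to one of the $n$ ``axis'' pairs $\{a, 2n+1-a\}$ with $a \in [n]$, together with a choice of a single integer shift. For each such axis pair I will track, using the factorization $f = u t_{[n]} w^{-1}$ and the $C$-symmetry of $u$ and $w$, exactly when the relevant inequality $f(a) > f(b)$ is satisfied, and how many inversion classes the pair contributes. The count on each axis is controlled by whether $w^{-1}$ sends $a$ into $[n]$ or into $[n+1,2n]$ and similarly for the $u$-image, so the totals collapse to $n + p_u - p_w$ after summing over $a \in [n]$. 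This finite case analysis, analogous to the one underlying Theorem~3.16 of \citep{KLS14} but refined by the extra involution, completes the proof.
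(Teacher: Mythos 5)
Your reduction is a genuinely different route from the paper, which simply asserts the proof is ``entirely analogous to the type $A$ version'' and leaves the reader to repeat the type $\widetilde{A}$ inversion count of \citep{KLS14} in the Coxeter group of type $\widetilde{C}_n$. You instead invoke the type $A$ result directly and account for the $C$-symmetry via two parallel ``defect'' identities, $\ell^A(x) = 2\ell^C(x) - p_x$ on $S_n^C$ (with $p_x = |x([n]) \cap [n+1,2n]|$) and $\ell^{\widetilde{A}}(f) = 2\ell^{\widetilde{C}}(f) - q_f$ on $\Bd^C(2n)$ (with $q_f$ the number of self-symmetric $\widetilde{A}$ inversion classes). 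Both defect formulas are correct, and the algebra reduces the entire proposition to the single combinatorial identity $q_f = n + p_u - p_w$, which is a tidy reformulation and avoids redoing the type $\widetilde{C}$ inversion count from scratch.

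The identity is true, but your sketch of its verification understates where the Bruhat relation enters. Take $w$ Grassmannian (legitimate, since $\ell^C(w)-\ell^C(u)$ depends only on $\langle u,w\rangle_n^C$). Carrying out the axis-pair analysis, one finds that for $a \in [n]$ the class of $(a, a')$ is an inversion iff $w^{-1}(a) \leq n$, while the class of $(a', a+2n)$ is an inversion iff $w^{-1}(a) > n$ and $u(w^{-1}(a)) \leq n$; these two alternatives never both occur, so
\[
q_f \;=\; |w([n]) \cap [n]| \;+\; |\{c \in [n+1, 2n] : w(c) \leq n \text{ and } u(c) \leq n\}|.
\]
The first term is $n - p_w$. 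To identify the second term with $p_u$ you need the containment $\{c > n : u(c) \leq n\} \subseteq \{c > n : w(c) \leq n\}$, and this is \emph{not} a consequence of the $C$-symmetry of $u$ and $w$ or of the factorization of $f$; it is exactly where $u \leq w$ with $w$ Grassmannian is used, via Lemma~\ref{grass}: for $c \in [n+1,2n]$, Lemma~\ref{grass} gives $w(c) \leq u(c)$, so $u(c) \leq n$ forces $w(c) \leq n$. Make this step explicit in the final write-up, since the Bruhat hypothesis is the real content of the identity, and without it the ``collapse'' to $n + p_u - p_w$ would fail.
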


\begin{rmk}
He and Lam gave a construction which yields analogs of $\Bd(k,n)$ for many partial flag varieties \cite{HL11}.  We focus here on a special case, which suffices for our purposes.  Let $G$ be a quasi-simple reductive group with Weyl group $(W,S)$ and extended affine Weyl group $\widehat{W}$.  We may assume that $G$ is \emph{adjoint}, so that $\widehat{W}$ is as large as possible. In particular, for each simple root $\alpha_i$ of $G$, the cocharacter lattice of $G$ contains a \emph{fundamental coweight} $\lm_i$ which is dual to $\alpha_i$.

Let $\alpha_i$ be a simple root of $W$.   Let $J =  S \backslash \{\alpha_i\}$, let $W^J$ denote the set of minimal-length representatives of $W/W_J$, and let $\lm_i$ be the fundamental coweight which is dual to $\alpha_i$.  Let $P_J$ be the parabolic subgroup of $G$ corresponding to $J$.  Then the desired indexing set for projected Richardson varieties in $G/P_J$ is given by
\[\mathcal{Q} = \{ut^{-\lm_i}w^{-1} \mid u \in W,\,w\in W^J\text{ and }u\leq w\} \subseteq \widehat{W}\]
where $t^{-\lm_i}$ is the \emph{translation element} corresponding to $-\lm_i$. 

Since the pairs $(u,w)$ listed above are a complete set of representatives for the set of $W_J$-Bruhat intervals $\langle u,w \rangle_J$ in $W$, the set $\mathcal{Q}$ has a poset structure inherited from the poset of $W_J$-Bruhat intervals.  Moreover, this coincides with the partial order induced on $\mathcal{Q}$ by the Bruhat order on $\widehat{W}$.  The poset $\mathcal{Q}$ is graded by the length function on $\widehat{W}$, and the length of an element $ut^{-\lm_i}w^{-1}$ of $\mathcal{Q}$ is equal to the codimension of the projected Richardson variety corresponding to $\langle u,w\rangle_J$ \cite{HL11}.

Note that we realize $\Bd(k,n)$ as a subset of the extended affine Weyl group of $\Gl(n)$, and $\Bd^C(2n)$ as a subset of the extended affine Weyl group of $\GSp(2n)$.  Since $\Gl(n)$ and $\GSp(2n)$ are not quasi-simple, we cannot apply the results of \cite{HL11} in this setting. Rather, we must look at the extended affine Weyl groups of the adjoint quasi-simple Lie groups of types $A_{n-1}$ and $C_{n}$, respectively.

The adjoint Lie group of type $A_{n-1}$ is $\PSL(n)$, the quotient of $\Sl(n)$ by the subgroup of scalar matrices.  Similarly, the adjoint Lie group $\PSp(2n)$ of type $C_n$ is the quotient of $\Sp(2n)$ by the subgroup of symplectic scalar matrices.  Taking the quotient of $\PSL(n)$ by the image of the subgroup of upper-triangular matrices gives $\mathcal{F}\ell(n)$, and similarly for $\PSp(2n)$.  Let $\widehat{W}_n^A$ denote the extended affine Weyl group of $\PSL(n)$, and $\widehat{W}_n^C$ denote the extended affine Weyl group of $\PSp(2n)$.

It is not hard to show that He and Lam's result applied to $G = \PSL(n)$ and the simple root $\alpha_k$ gives the desired isomorphism of graded posets between $\Q(k,n)$ and $\Bd(k,n)$.  The translation element $t_{[k]}$ in $\widetilde{S}_n^A$ plays an analogous role to the translation element $t^{-\lm_k}$ in $\widehat{W}_n^A,$ where $\lm_k$ is the fundamental coweight which is dual to $\alpha_k$.  The situation is similar in type $C$.  Here $t_{[n]}$ in $\widetilde{S}^C_n$ plays an analogous role to the translation element $t^{-\lm_n}$ in $\widehat{W}_{n}^C$, where $\lm_n$ is the fundamental coweight dual to $\alpha_n$.
\end{rmk}

\begin{rmk}
In \citep{LW07}, the authors introduced the poset of \emph{type $B$ decorated permutations} of order $2n$, denoted $\mathcal{D}_n^B$, and showed that it indexes projected Richardson varieties in both the odd orthogonal Grassmannian $\text{OG}(n,2n+1)$ (a flag variety of type $B_n$) and the Lagrangian Grassmannian $\Lm(2n)$. The correspondence between decorated permutations and bounded affine permutations maps $\mathcal{D}_n^B$ isomorphically to $\Bd^C(2n)$. Lam and Williams also give an isomorphism from $\Q^C(2n)$ to $\mathcal{D}_n^B$ which is equivalent to our isomorphism from $\Q^C(2n)$ to $\Bd^C(2n)$.  What is new in the present paper is the realization of $\Q^C(2n)$ as an induced subposet $\Q(n,2n)$, and the description of $\Bd^C(2n)$ in terms of $\widetilde{S}_n^C$.
\end{rmk}

\section{Bridge graphs and MR parametrizations for $\Lm(2n)$}

\label{Cbridge}

We now define bridge graphs for projected Richardson varieties in $\Lm(2n)$, and show that they encode MR parametrizations. 
Recall that $a' = 2n+1-a$ for all $a \in [2n]$.  Let $f \in \Bd^C(2n)$.
Suppose for some $a \in [n]$, we have 
\begin{enumerate}
\item $f(a') > f(a).$
\item Every $c \in [a+1,a'-1]$ is a fixed point of $f.$
\end{enumerate}
Then $f$ has a \emph{symmetric bridge} at $(a,a')$.  
Alternatively, for $a,b \in [n]$ with $a < b$, we have
\begin{enumerate}
\item $f(a) > f(b)$ and  $f(b') > f(a').$
\item Every $c \in [a+1,b-1] \cup [b'+1,a']$ is a fixed point of $f$.
\end{enumerate}
Then we say $f$ has a \emph{symmetric pair of bridges} at $(a,b)$ and $(b',a')$.

Now suppose that for $f,g \in \Bd^C(2n)$, we have $g = fs_{(a,a')}$ where $f$ has a symmetric bridge at $(a,a')$.  Then $g <  f$ in the Bruhat order on $\Bd(n,2n)$, and hence in the Bruhat order order on $\Bd^C(2n).$  It follows from \citep[Proposition 8.4.1]{BB05} that in fact $g \lessdot f$.  Similarly, if $g = fs_{(a,b)}s_{(b',a')}$ where $f$ has a symmetric pair of bridges at $(a,b)$ and $(b',a')$, then $g \lessdot f$ in the Bruhat order on $\Bd^C(2n)$.  

Let $f = ut_{[n]}w^{-1}$, where $u,w \in S_n^C$.  Then a \emph{symmetric bridge graph} for $f \in \Bd^C(n,2n)$ is a graph obtained by starting with the symmetric lollipop graph corresponding to $t_{u[n]}$ and repeatedly adding either symmetric bridges $(a,a')$ or symmetric pairs of bridges $(a,b)(b',a')$ until we obtain a graph with bounded affine permutation $f$.  See Figure \ref{symbrd} for an example.  It is perhaps not obvious that each $f \in \Bd^C(n)$ corresponds to a symmetric bridge decomposition.  However, this will follow from our results.

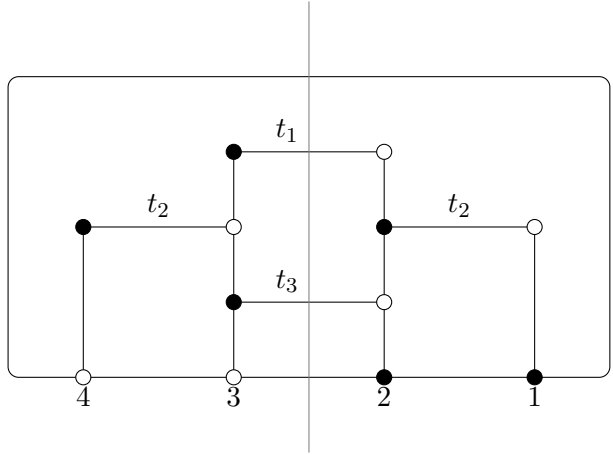
\begin{figure}[H] 
\centering
\begin{tikzpicture}
\draw [rounded corners] (0,0) rectangle (8,4);
\draw (3,0) -- (3,3) -- (5,3) -- (5,0);
\draw (1,0) -- (1,2) -- (3,2);
\draw (5,2) -- (7,2) -- (7,0);
\draw (3,1) -- (5,1);
\bdot{1}{2};\bdot{3}{3};\bdot{5}{2};\bdot{3}{1};
\wdot{3}{2};\wdot{5}{3};\wdot{5}{1};\wdot{7}{2};
\wdot{1}{0};\wdot{3}{0};\bdot{5}{0};\bdot{7}{0};
\node [below] at (1,0) {$4$};
\node [below] at (3,0) {$3$};
\node [below] at (5,0) {$2$};
\node [below] at (7,0) {$1$};
\draw [gray, thin] (4,5) -- (4,-1);
\node [above left] at (4,3) {$t_1$};
\node [above] at (2,2) {$t_2$};
\node [above] at (6,2) {$t_2$};
\node [above left] at (4,1) {$t_3$};
\end{tikzpicture}
\caption{A symmetric bridge graph with symmetric weights.}
\label{symbrd}
\end{figure}

Let $B$ be a symmetric bridge graph for $f$.  Weight each symmetric bridge $(a_i,b_i)$ with an indeterminate $t_i$.  For each symmetric pair of bridges, weight both bridges $(a_j,b_j)$ and $(b_j',a_j')$ with the same indeterminate $t_j$.  Applying the boundary measurement map, we get a parametrization of a locally closed subset of $\mathring{\Pi}_A^f$.  We claim that the image lies in $\Lm(2n)$.  In fact, we prove something much stronger.  The following is a Lagrangian analog of the main theorem of \citep{Kar14}. 

\begin{prop}Let $f=ut_{[n]}w^{-1} \in \Bd^C(2n)$, where $u \leq_n^C w$.  Then every MR parametrization for $\mathring{\Pi}^C_f$ corresponds to a parametrization arising from a symmetric bridge graph.  Conversely, every parametrization of $\mathring{\Pi}^C_f$ from a symmetric bridge graph arises from some MR parametrization.
\end{prop}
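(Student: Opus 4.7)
The plan is to reduce the type $C$ statement to the type $A$ theorem from \cite{Kar14} by exploiting the Bruhat embedding $S_n^C \hookrightarrow S_{2n}$, together with Lemma \ref{deoAC} which tells us that type $C$ Deodhar components embed into type $A$ Deodhar components via the same underlying sequence of factors, with some parameters identified. Given an MR parametrization of $\mathring{\Pi}^C_f$ coming from a reduced word $\widetilde{\mf{w}}$ of $w \in S_n^C$ and a PDS $\widetilde{\mf{u}} \preceq \widetilde{\mf{w}}$, I first unpack it in terms of the pinning for $\Sp(2n)$ into a product of type $A$ matrices $x^A_{i}(t)$, $y^A_i(t)$ and $\dot{s}_i^A$. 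By the definition $x^C_i(t) = x^A_i(t)x^A_{2n-i}(t)$, $y_i^C(t) = y^A_i(t)y^A_{2n-i}(t)$, $\dot{s}^C_i = \dot{s}^A_i\dot{s}^A_{2n-i}$ for $i<n$ (and single type $A$ factors when $i=n$), the expanded word is exactly the reduced word $\mf{w}$ for $w$ in $S_{2n}$ underlying the type $A$ PDS $\mf{u} \preceq \mf{w}$, except that symmetric pairs of parameters are set equal.

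Next, I apply the main theorem of \cite{Kar14} to the type $A$ pair $(\mf{u}, \mf{w})$: this produces a bridge graph $G$ for $\mathring{\Pi}^A_f$ whose boundary measurement map, with one weight per bridge and all other edges gauge-fixed to $1$, agrees with the type $A$ MR parametrization. Explicitly, the bridges of $G$ are indexed by the $1$'s of $\mf{u}$, and the $r^{th}$ bridge is $(a_r,b_r) = u_{(j_r-1)}(s_{i_{j_r}})u_{(j_r-1)}^{-1}$. The crucial step is then to observe that because the underlying word $\mf{w}$ and subexpression $\mf{u}$ are built from symmetric blocks ($s^A_i s^A_{2n-i}$ or the single generator $s^A_n$), the transpositions $(a_r,b_r)$ come in exactly two flavors: (i) blocks where both $s^A_i$ and $s^A_{2n-i}$ are skipped, producing a pair of bridges $(a,b)$ and $(b',a')$ with $a<b \leq n$ whose symmetric partner is forced by the symmetry of $u_{(j_r-1)} \in S_n^C$, and (ii) skipped factors $s^A_n$, which yield a single bridge $(a,a')$. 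I verify inductively that in both cases the hypotheses of Proposition \ref{canadd} are satisfied after adding the preceding symmetric blocks, so the sequence is a legal symmetric bridge decomposition of $G$, and therefore $G$ is a symmetric bridge graph for $f$.

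For the weight identification, I check that setting the parameters of a symmetric pair of bridges equal to a common value $t$ is exactly the specialization already forced by $x^C_i(t) = x^A_i(t)x^A_{2n-i}(t)$; thus the type $C$ MR parametrization factors through the boundary measurement map of $G$ with the symmetric weighting. Conversely, given any symmetric bridge graph $G$ for $f$, I read off its symmetric bridges and symmetric pairs of bridges in the order they were added; each corresponds to a simple reflection $s^C_n$ or $s^C_i$ ($i<n$), giving a reduced word $\widetilde{\mf{w}}$ for $w$ in $S_n^C$ and a PDS $\widetilde{\mf{u}} \preceq \widetilde{\mf{w}}$ for $u$, whose MR parametrization reproduces the symmetric boundary measurement map by the same calculation as above.

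The main obstacle I expect is the verification in step (i) that the bridges indeed come in symmetric pairs with the correct labels $(a,b)$ and $(b',a')$, rather than mismatched labels or a bridge crossing the line of symmetry in a non-symmetric way. This is a bookkeeping argument using the symmetry condition $w(a') = w(a)'$ satisfied by $u_{(j_r-1)} \in S_n^C$ and the fact that, in a length-additive product in $S_n^C$ read one type $A$ factor at a time, the intermediate elements $u_{(j-1)}$ already lie in $S_n^C$ at the ends of each symmetric block. Once this symmetry is secured, verifying that Proposition \ref{canadd} applies at each step and matching weights is routine, and the converse direction is essentially read off by inverting the construction.
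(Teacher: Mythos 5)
Your approach matches the paper's: both directions reduce to the type $A$ theorem of \cite{Kar14} via the Bruhat embedding $S_n^C \hookrightarrow S_{2n}$, expand the type $C$ pinning into type $A$ factors, and observe that symmetric blocks yield symmetric bridges.

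However, there is a real gap in your forward direction, precisely at the step you flag as the ``main obstacle.'' You claim that the bridges come in the form $(a,b)$, $(b',a')$ with $a<b\le n$ (or a single $(a,a')$) because of the symmetry $u_{(j-1)}(c')=u_{(j-1)}(c)'$ of the intermediate element. But symmetry alone does \emph{not} rule out the bad case $a\le n<b$ with $b\ne a'$: the conjugate of $s^A_i s^A_{2n-i}$ by a symmetric $u^C_{(k-1)}$ can, a priori, produce two bridges each crossing the distinguished diameter, which are reflections of each other but which \emph{cross} (e.g.\ $(1,5)$ and $(2,6)$ for $n=3$). The symmetry condition $w(c')=w(c)'$ is consistent with this configuration; no amount of ``bookkeeping'' with $u^C_{(k-1)}\in S_n^C$ eliminates it. What actually rules it out — and what the paper relies on, albeit only in the aside about which type $C$ roots $\alpha$ can appear — is the \emph{planarity} of the bridge graph produced by the type $A$ theorem: two distinct symmetric bridges that both cross the diameter must intersect, contradicting the planarity already guaranteed by \cite{Kar14}. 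You need to invoke this; it cannot be derived from the symmetry of $u^C_{(k-1)}$.

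Your converse direction is also looser than it should be: a bridge $(a_r,b_r)$ does not ``correspond to a simple reflection $s^C_i$'' — it is a conjugate $u_{(j_r-1)} s^C_{i_{j_r}} u_{(j_r-1)}^{-1}$, so you cannot literally read a reduced word off the bridge sequence without first recovering the intermediate permutations. The paper instead passes through the \emph{bridge diagram} framework of \cite{Kar14} (showing that a symmetric bridge graph corresponds to a bridge diagram that is symmetric up to isotopy), which is the mechanism that actually inverts the construction. Your sketch gestures at the right idea but elides the step that makes it work. Both directions would be sound if you (a) explicitly invoke planarity of the type $A$ graph in the forward direction, and (b) route the converse through the symmetric-bridge-diagram argument, mirroring \cite{Kar14}, rather than claiming a direct bridge-to-simple-reflection correspondence.
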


\begin{proof}

The proof is nearly identical to the type $A$ version.  We sketch the argument here, and refer the reader to \citep{Kar14} for details. Let $\widetilde{\mf{w}}$ be a reduced word for $w$ in $S_n^C$, and let $\widetilde{\mf{u}} \preceq \widetilde{\mf{w}}$ be the PDS for $u$ in $\widetilde{\mf{w}}$.  Let $\mf{u} \preceq \mf{w}$ be the corresponding PDS in $S_{2n}$, which is unique up to commutation moves.  Note that $u \leq_n w$ as elements of $S_{2n}$.  Hence $\mf{u} \preceq \mf{w}$ corresponds to a unique bridge graph; labeling the bridges with parameters gives the MR parametrization of $\mathcal{D}^A_{\mf{u},\mf{w}}$ corresponding to $\mf{u} \preceq \mf{w}$.  As in the proof of Lemma \ref{deoAC}, setting the weights on the two bridges in each symmetric pair equal to each other gives a parametrization of $\mathcal{D}^C_{\widetilde{\mf{u}},\widetilde{\mf{w}}}$, which we view as a parametrization of $\mathring{\Pi}_f^C$.

As an aside, we note that the planarity of the resulting graph puts restrictions on the sequence of factors $x^C_{\alpha}$ which may appear in the parametrization corresponding to a PDS $\mf{u} \preceq \mf{w}$ when the parametrization is written as in Equation \ref{betas}.  In particular, the only roots $\alpha$ which appear are of the form
\[\epsilon_j - \epsilon_i= - \alpha_i - \alpha_{i+1} - \cdots - \alpha_{j-1}\]
for $1 \leq i < j \leq n$, and those of the form
\[ -2\epsilon_{i} = -2\alpha_i - 2\alpha_{i+1} - \cdots - \alpha_n\]
for $1 \leq i \leq n$.
Indeed, a factor $x_{\alpha}$ for $\alpha = \epsilon_i + \epsilon_j$ would correspond to a pair of bridges $(i,j')$ and $(j,i')$, contradicting the planarity of the bridge graph.
 
For the reverse direction, it is enough to show that every symmetric bridge graph corresponds to a so-called \emph{bridge diagram}, defined in \citep{Kar14}, which is symmetric up to isotopy with respect to reflection through the horizontal axis.  The proof is entirely analogous to the type $A$ case.  As in type $A$, we build the desired bridge diagrams iteratively, by either adding bridges or lollipops; in the type $C$ case, to maintain the symmetry of our diagrams, we always add lollipops in symmetric black-white pairs.  
\end{proof}

We say a symmetric bridge graph has \emph{symmetric weights} if whenever two bridges form a symmetric pair, their weights are equal.  So the above result says that symmetric bridge graphs with symmetric weights give parametrizations of projected Richardson varieties in $\Lm(2n)$.  Let $B$ be a symmetric bridge graph with symmetric weights, corresponding to $\mathring{\Pi}_B^C$.  By the properties of MR parametrizations, restricting the weights of the bridges in $B$ to $\mathbb{R}^+$ gives a parametrization of the totally nonnegative part of $\mathring{\Pi}_B^C$.

\section{The Lagrangian boundary measurement map}

\label{Cbdry}

\subsection{Symmetric plabic graphs.}

We now define symmetric plabic graphs, first introduced in \citep{KS15}.  Just as ordinary plabic graphs yield parametrizations of positroid varieties, symmetric plabic graphs give parametrizations of projected Richardson varieties in $\Lm(2n)$.

\begin{rmk}Throughout this paper, we require plabic graphs to be bipartite.  Postnikov's original definition allows plabic graphs which are not bipartite; however, these graphs can be made bipartite by either contracting unicolor edges, or adding degree-$2$ vertices.  Our construction of the boundary measurement map only applies to bipartite graphs; the general case requires a different construction, given in \citep{Pos06}.  We believe that our results extend naturally to the non-bipartite case, but have not checked the details.
\end{rmk}

\begin{defn}A symmetric plabic graph $G$ is a plabic graph with $2n$ boundary vertices, which has a distinguished diameter $d$ such that the following hold:
\begin{enumerate}
\item The diameter $d$ has one endpoint between boundary vertices $2n$ and $1$, and the other between $n$ and $n+1$.
\item No vertex of $G$ lies on $d$, although some edges may cross $d$.
\item Reflecting the graph $G$ through the diameter $d$ gives a graph $G'$ which is identical to $G$, but with the colors of vertices reversed.
\end{enumerate}
\end{defn}

See Figure \ref{graph} for an example.
The following is an immediate consequence of Theorem 3.1 from \citep{KS15}.

\begin{lem}
Let $G$ be a symmetric plabic graph.  Then $f_G \in \Bd^C(2n)$.  Conversely, for every $g \in \Bd^C(2n)$, there is a symmetric plabic graph $G$ such that $f_G = g$.
\end{lem}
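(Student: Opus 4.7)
My plan is to prove both directions of the lemma by exploiting the defining $\kappa\rho$-symmetry of a symmetric plabic graph, where $\rho$ is reflection through the diameter $d$ and $\kappa$ is color reversal. For the forward direction, $\rho$ carries boundary vertex $a$ to $a'$. Since $\rho$ reverses handedness, the image under $\rho$ of a trip in $G$ obeys the opposite trip rule (turn right at white, left at black); applying $\kappa$ then swaps vertex colors and restores the original rule. Hence I expect the $\kappa\rho$-image of each trip $a \to \sigma_G(a)$ in $G$ to be a genuine trip $a' \to \sigma_G(a)'$ in $\kappa\rho G = G$, yielding $\sigma_G(a') = \sigma_G(a)'$.

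From this identity I would deduce $f_G(a) + f_G(a') = 4n+1$ by case analysis on the sign of $\sigma_G(a)-a$. If $\sigma_G(a) > a$, then $\sigma_G(a') < a'$, so $f_G(a) = \sigma_G(a)$ while $f_G(a') = \sigma_G(a')+2n = 4n+1-\sigma_G(a)$; the case $\sigma_G(a) < a$ is symmetric. When $a$ is a fixed point, it is a boundary lollipop whose image under $\kappa\rho$ is a lollipop of the opposite color at $a'$, and the rule $f_G(a) = a$ (black) or $a+2n$ (white) gives the identity directly. Summing $f_G(a)+f_G(a')=4n+1$ over $a\in[2n]$ forces $\sum(f_G(a)-a) = 2n^2$, so $f_G$ has type $(n,2n)$; combined with the symmetry this places $f_G$ in $\Bd^C(2n)$.

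For the converse, given $g \in \Bd^C(2n)$, I would write $g = ut_{[n]}w^{-1}$ with $u \leq_n^C w$ and $w$ of minimal length in its coset, fix a reduced word $\widetilde{\mf{w}}$ for $w$ in $S_n^C$, and let $\widetilde{\mf{u}} \preceq \widetilde{\mf{w}}$ be the unique PDS for $u$. I then plan to construct $G$ by imitating the type $A$ bridge-graph construction of \citep{Kar14}: start from the lollipop graph with white lollipops at the positions in $u([n])$, and successively add bridges corresponding to the reflections $\widetilde{u}_{(j_r-1)}\, s^C_{i_{j_r}}\, \widetilde{u}_{(j_r-1)}^{-1}$ for $r = d, d-1, \ldots, 1$. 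The initial lollipop graph is already $\kappa\rho$-symmetric because $u \in S_n^C$ forces $u([n])$ to contain exactly one element of each pair $\{a,a'\}$; and each step adds either a single symmetric bridge $(a,a')$ across $d$ (when $i_{j_r}=n$) or a symmetric pair of bridges $(a,b),(b',a')$ with $a<b\leq n$ (when $i_{j_r}<n$), preserving the $\kappa\rho$-symmetry.

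The hard part will be verifying that this procedure produces a valid reduced plabic graph with the correct bounded affine permutation. The most efficient route is to invoke Lemma \ref{deoAC}, which shows that under $S_n^C \hookrightarrow S_{2n}$ the PDS $\widetilde{\mf{u}} \preceq \widetilde{\mf{w}}$ corresponds to a PDS $\mf{u} \preceq \mf{w}$ in $S_{2n}$; applying the type $A$ bridge-graph construction of \citep{Kar14} to $\mf{u} \preceq \mf{w}$ then yields a reduced plabic graph whose bounded affine permutation is $f_{u,w} = g$ by \eqref{rightperm}. The symmetry of the underlying $S_n^C$ PDS is exactly what forces the resulting type $A$ bridge graph to be symmetric in our sense, completing the construction.
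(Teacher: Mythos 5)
Your proposal is correct in both directions, but it is worth noting that the paper does not actually prove this lemma at all: it is quoted as ``an immediate consequence of Theorem 3.1 from \citep{KS15}.'' You have therefore supplied a self-contained argument where the paper supplies a citation. Your forward direction (the $\kappa\rho$-image of a trip is again a trip because reflection reverses handedness and color reversal restores the turning rule, giving $\sigma_G(a') = \sigma_G(a)'$, hence $f_G(a)+f_G(a')=4n+1$ and type $(n,2n)$) is the natural argument and is presumably close to what \citep{KS15} does; combined with the paper's earlier characterization of $\Bd^C(2n)$ as the symmetric elements of $\Bd(n,2n)$, it is complete. Your converse via the symmetric bridge-graph construction is essentially the content of Section \ref{Cbridge} of the paper (where the author remarks that existence of a symmetric bridge decomposition for each $f \in \Bd^C(2n)$ ``will follow from our results''), so you are reassembling machinery the paper develops anyway, just deploying it to prove this lemma directly.

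Two small points of care. First, in the forward direction you implicitly need $G$ to be reduced (or at least every fixed point of $\sigma_G$ to be a lollipop) for $f_G$ to be well defined; your lollipop case handles this, but the hypothesis should be stated. Second, in the converse your description of the symmetric pair of bridges as $(a,b),(b',a')$ with $a<b\leq n$ is not literally what conjugation produces: for $i_{j_r}<n$ the conjugate of $s^C_{i_{j_r}}$ by $\widetilde{u}_{(j_{r}-1)} \in S_n^C$ is a pair $(v(i),v(i+1))$ and its mirror, and $v(i+1)$ need not lie in $[n]$. What matters is only that the pair is mirror-symmetric and that planarity (equivalently, the validity of the successive bridge additions, which follows from the type $A$ argument of \citep{Kar14} applied to the image PDS $\mf{u}\preceq\mf{w}$) rules out the crossing configurations; your final paragraph invoking Lemma \ref{deoAC} and \eqref{rightperm} is the right way to close this, so the imprecision is harmless but should be cleaned up.
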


Let $G$ be a symmetric plabic graph, with vertex set $V$.  We define a map $r:V \rightarrow V$ which maps each vertex $v \in V$ to its image under reflection through $d$.
  
\begin{defn}A weighting $\mu$ of a symmetric plabic graph $G$ is \emph{symmetric} if $\mu$ assigns the same weight to $(u,v)$ and $(r(u),r(v))$ for each edge $(u,v)$ of $G$.\end{defn}

We will show that for a symmetric plabic graph $G$, the boundary measurement map takes a symmetric weighting of $G$ to a point in $\Lm(2n)$.  We first characterize points of $\Lm(2n)$ in terms of Pl\"{u}cker coordinates.  Recall that for $i \in [2n]$, we have $i' = 2n+1-i$.

\begin{lem}\label{Lagrangian}
Let $V \in \Gr(n,2n)$, and let $I = \{i_1,\ldots,i_n\}$ be the lex-first non-zero Pl\"{u}cker coordinate of $V$.  Then $V \in \Lm(2n)$ if and only if the following hold:
\begin{enumerate}
\item For each $1 \leq i \leq n$, we have $i \in I$ if and only if $i' \not\in I$.
\item For each $j < k \in [n]$ with $i_j < i_k'$, we have
\[\Delta_{(I\backslash \{i_j\}) \cup \{i_k'\}} = \Delta_{(I \backslash \{i_k\})\cup \{i_j'\}}.\]
\end{enumerate}
\end{lem}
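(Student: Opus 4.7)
The plan is to work with a reduced row-echelon representative of $V$ and translate both the Lagrangian condition and the Pl\"ucker relations of the lemma into conditions on matrix entries. Using the $\text{GL}_n$-gauge freedom, represent $V$ by an $n \times 2n$ matrix $M$ in reduced row-echelon form with pivots at $i_1 < \cdots < i_n$, so $\Delta_I(M) = 1$. The lex-first property of $I$ forces $M_{a,j} = 0$ whenever $j \notin I$ and $j < i_a$; a cofactor expansion then yields
\[\Delta_{(I\setminus\{i_a\})\cup\{j\}}(M) = (-1)^{a+p(a,j)} M_{a,j} \qquad (*)\]
for each $j \notin I$, where $p(a,j)$ denotes the position of $j$ in the sorted set $(I\setminus\{i_a\})\cup\{j\}$. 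On the other side, $V$ is Lagrangian iff $MEM^T = 0$; since $E$ is antisymmetric, this reduces to $\langle v_a, v_b\rangle = 0$ for all $a < b$, where $v_a$ is the $a$-th row of $M$ and $\langle v_a, v_b\rangle = \sum_p (-1)^{p'} M_{a,p} M_{b,p'}$.

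For Condition 1, I would argue by contradiction: if some pair $\{i,i'\}$ lies entirely in $I$, say $i = i_a$ and $i' = i_b$, then the RREF vanishing constraints kill every term in $\langle v_a, v_b\rangle$ except the pivot--pivot contribution at $p = i$, giving $\langle v_a, v_b\rangle = (-1)^{i'}\neq 0$. Since $|I| = n$ equals the number of pairs $\{i,i'\}$ with $i \in [n]$, Condition 1 follows. Assuming Condition 1, every $p \in [2n]$ has exactly one of $p, p'$ in $I$, eliminating any ``both $p,p'\notin I$'' contributions to $\langle v_j, v_k\rangle$. A short case analysis using the RREF constraints then shows that when $i_j < i_k'$, the sum collapses to exactly two terms,
\[\langle v_j, v_k\rangle = (-1)^{i_j'} M_{k,i_j'} + (-1)^{i_k} M_{j,i_k'},\]
while when $i_j + i_k > 2n+1$ it vanishes automatically (the case $i_j + i_k = 2n+1$ is ruled out by Condition 1). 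Hence the Lagrangian condition for $V$ is equivalent to $M_{j,i_k'} = (-1)^{i_k - i_j} M_{k,i_j'}$ for every $j < k$ with $i_j < i_k'$.

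It remains to match this matrix-entry relation against the Pl\"ucker equality in Condition 2. Expanding both sides using $(*)$, Condition 2 becomes $(-1)^{j+\alpha} M_{j,i_k'} = (-1)^{k+\beta} M_{k,i_j'}$ where $\alpha = |I \cap [1,i_k']|$ and $\beta = |I \cap [1,i_j']|$. The key combinatorial input, derived from Condition 1 via the bijection $x \leftrightarrow x'$ between $(i_k', i_j']$ and $[i_j, i_k)$, is
\[\beta - \alpha = |I \cap (i_k', i_j']| = (i_k - i_j) - (k - j),\]
which forces $(-1)^{(k-j)+(\beta - \alpha)} = (-1)^{i_k - i_j}$, precisely the sign appearing in the isotropy relation. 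Thus Condition 2 and the vanishing of $\langle v_j, v_k\rangle$ are equivalent, and both directions of the lemma follow (for the reverse direction, using that an $n$-dimensional isotropic subspace of $\mathbb{C}^{2n}$ is automatically Lagrangian). The main technical obstacle is exactly this sign bookkeeping: the two cofactor signs from $(*)$ must combine with $(-1)^{i_k - i_j}$ so that Condition 2 appears with a clean plus sign rather than a sign depending on $(i_j, i_k, j, k)$.
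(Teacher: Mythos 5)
Your proposal is correct and follows essentially the same route as the paper: represent $V$ by an $n\times 2n$ RREF matrix, compute $\langle v_j,v_k\rangle$ directly (Condition 1 kills all but the two endpoint terms), translate each surviving entry to a Pl\"ucker coordinate via cofactor expansion, and match signs. The only cosmetic difference is that you package the sign bookkeeping through the single identity $\beta-\alpha=(i_k-i_j)-(k-j)$, whereas the paper reaches the same conclusion by a two-case argument ($i_k\le n$ versus $i_k>n$) and counts $|I\cap[i_j+1,i_k'-1]|$; both are equivalent after noting $|I\cap[1,i_j]|=j$.
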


\begin{proof}
Let $V$ be an $n$-dimensional subspace of $\mathbb{C}^{2n}$.
Represent $V$ by a $k \times n$ matrix in reduced row-echelon form, so the columns indexed by $I$ form a copy of the identity matrix.  For $j < k$, let $v_j$ and $v_k$ represent rows $j$ and $k$ of $M$, and say
\[v_j=(a_{1},\ldots,a_{2n})\]
\[v_k=(b_{1},\ldots,b_{2n})\]
Then we have
\begin{equation} \label{product} \langle v_j , v_k \rangle = \sum_{r = 1}^{n} (a_{(2r-1)}b_{(2r-1)'} - a_{(2r)}b_{(2r)'}) \end{equation}
Now, $a_{\ell} = 0$ for $\ell < i_j$ and $b_{\ell'} = 0$ for $\ell > i_k'$, and $a_{i_j} = b_{i_k} = 1$.
Suppose $i_k' = i_j$, so that $V$ violates condition (1) above.
Then $\langle v_j , v_k \rangle = \pm 1$, and $V$ is not Lagrangian.  

Assume now that $V$ satisfies condition (1).  We will show that $V$ is Lagrangian if and only if $V$ satisfies condition $(2)$.
By Equation \eqref{product}, we have $\langle v_j,  v_k \rangle = 0$ unless $i_j \leq i_k'$.

Suppose $i_j \leq i_{k'}$.  
Since $i_k' \neq i_j$ by hypothesis, we have $i_j  < i_{k'}$.  For each of $\ell \not\in \{i_j,i_k,i_j',i_k'\}$, either $a_{\ell} = 0$ or $b_{\ell'} = 0$, since either $\ell$ or $\ell'$ is a pivot column of $M$.  Hence, we have
\[\langle v_j, v_k \rangle = (-1)^{i_j+1}b_{i_j'} + (-1)^{i_k'+1}a_{i_k'}.\]

Thus $V$ is Lagrangian if and only if, for all $j < k$ with $i_j < i_k'$, we have
\[b_{i_j'} = \begin{cases}
a_{i_k'} & \text{$i_j$ and $i_k$ have the same parity}\\
-a_{i_k'} &  \text{$i_j$ and $i_k$ have opposite parity}
\end{cases}\]
or equivalently, we have
\[b_{i_j'} = (-1)^{(i_k - i_j)}a_{i_j}'.\]

In the language of Pl\"ucker coordinates, this is equivalent to a collection of relations of the form
\[\Delta_{(I \backslash \{i_j\}) \cup \{i_k'\}} = \pm \Delta_{(I \backslash \{i_k\}) \cup \{i_j'\}}\]
for all $i,j$ with $i_j < i_k'$.
We calculate the relative sign in each case.
Since $i_j < i_k'$ we have
\begin{align*} 
a_{i_k'} &=(-1)^{|I \cap [i_j+1,i_k'-1]|}\Delta_{(I \backslash \{i_j\}) \cup \{i_k'\}}\\
b_{i_j'} & = (-1)^{|I \cap [i_k+1,i_j'-1]|}\Delta_{(I \backslash \{i_k\}) \cup \{i_j'\}}
\end{align*}
Consider first the case where $i_j < i_k \leq n$. Then 
\[(-1)^{|I \cap [i_j+1,i_k'-1]|}\Delta_{(I\backslash \{i_j\}) \cup \{i_k'\}} = (-1)^{(i_k-i_j)}(-1)^{|I \cap [i_k+1,i_j'-1]|}\Delta_{(I \backslash \{i_k\}) \cup \{i_j'\}}\]
The pivot columns that lie strictly between $i_k$ and $i_k'$ contribute at factor of $-1$ to each side of this equation.  Canceling these factors, we are left with
\[(-1)^{|I \cap [i_j+1,i_k]|}\Delta_{(I\backslash \{i_j\}) \cup \{i_k'\}} = (-1)^{(i_k-i_j)}(-1)^{|I \cap [i_k',i_j'-1]|}\Delta_{(I \backslash \{i_k\}) \cup \{i_j'\}}\]
Note that $\ell \in I \cap [i_j+1,i_k]$ is contained in $I$ and only if $\ell' \in [i_k',i_j'-1]$ is \emph{not} contained in $I$. 
Hence 
\[|I \cap [i_j+1,i_k]|+|I \cap [i'_k,i'_j-1]| = |[i_j+1,i_k]| = i_k- i_j\]
and we have 
\[\Delta_{(I \backslash \{i_k\}) \cup \{i_j'\}} = \Delta_{(I\backslash \{i_j\}) \cup \{i_k'\}}\]

Now, take the case where $n \leq i_k \leq i_j'$.
Again, we have
\[(-1)^{|I \cap [i_j+1,i_k'-1]|}\Delta_{(I\backslash \{i_j\}) \cup \{i_k'\}} = (-1)^{(i_k-i_j)}(-1)^{|I \cap [i_k+1,i_j'-1]|+1}\Delta_{(I \backslash \{i_k\}) \cup \{i_j'\}}\]
By a similar argument to the above, we have
\[|I \cap [i_j+1,i_k'-1]|+|I \cap [i_k+1,i_j'-1] = |[i_j+1,i_k'-1]| = i_k' - i_j - 1\]
Since $i_k'$ and $i_k$ have opposite parity, it follows that $(-1)^{(i_k' - i_j -1)} = (-1)^{(i_k-i_j)}$, and so
\[\Delta_{(I\backslash \{i_j\}) \cup \{i_k'\}} = \Delta_{(I \backslash \{i_k\}) \cup \{i_j'\}}.\]
This completes the proof.
\end{proof}

\begin{figure}[H]
\centering
\begin{tikzpicture}
\draw (0,0) circle (3);
\draw ({2*cos(60)},{2*sin(60)}) -- ({-2*cos(60)},{2*sin(60)})  -- ({-2*cos(60)},{-2*sin(60)})  -- ({2*cos(60)},{-2*sin(60)}) --  ({2*cos(60)},{2*sin(60)});
\draw [gray, thin] (0,3.5) -- (0,-3.5);
\draw ({2*cos(60)},{2*sin(60)}) -- ({3*cos(60)},{3*sin(60)});
\draw ({-2*cos(60)},{2*sin(60)}) -- ({-3*cos(60)},{3*sin(60)});
\draw ({-2*cos(60)},{-2*sin(60)}) -- ({-3*cos(60)},{-3*sin(60)});
\draw ({2*cos(60)},{-2*sin(60)}) -- ({3*cos(60)},{-3*sin(60)});
\draw ({3*cos(30)},{3*sin(30)}) -- (2,0) -- ({3*cos(30)},{-3*sin(30)});
\draw ({-3*cos(30)},{3*sin(30)}) -- (-2,0) -- ({-3*cos(30)},{-3*sin(30)});
\bdot{{2*cos(60)}}{{2*sin(60)}};\wdot{{-2*cos(60)}}{{2*sin(60)}};\bdot{{-2*cos(60)}}{{-2*sin(60)}};\wdot{{2*cos(60)}}{{-2*sin(60)}};
\wdot{{3*cos(60)}}{{3*sin(60)}};\bdot{{3*cos(30)}}{{3*sin(30)}};\wdot{2}{0};\bdot{{3*cos(30)}}{{-3*sin(30)}};\bdot{{3*cos(60)}}{{-3*sin(60)}};
\bdot{{-3*cos(60)}}{{3*sin(60)}};\wdot{{-3*cos(30)}}{{3*sin(30)}};\bdot{-2}{0};\wdot{{-3*cos(30)}}{{-3*sin(30)}};\wdot{{-3*cos(60)}}{{-3*sin(60)}};
\node [above right] at ({3*cos(60)},{3*sin(60)}) {1};
\node [above right] at ({3*cos(30)},{3*sin(30)}) {2};
\node [below right] at ({3*cos(-30)},{3*sin(-30)}) {3};
\node [below right] at ({3*cos(-60)},{3*sin(-60)}) {4};
\node [below left] at ({3*cos(240)},{3*sin(240)}) {5};
\node [below left] at ({3*cos(210)},{3*sin(210)}) {6};
\node [above left] at ({3*cos(150)},{3*sin(150)}) {7};
\node [above left] at ({3*cos(120)},{3*sin(120)}) {8};
\node [right] at (0,2) {4};
\node [right] at (0,-2) {3};
\node [right] at (1,0) {7};
\node [left] at (-1,0) {7};
\end{tikzpicture}
\caption{A symmetric weighting of a symmetric plabic graph.  All unlabeled edges have weight $1$.}
\label{graph}
\end{figure}
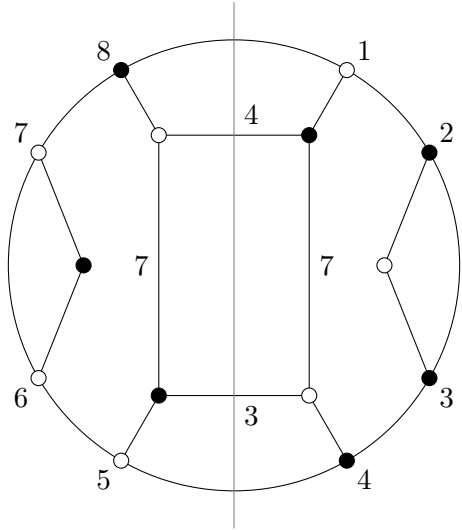

\begin{prop}Let $G$ be a symmetric plabic graph with a symmetric weighting $\mu$, and suppose $G$ is reduced as an ordinary plabic graph.  Then the point $P = \partial_G(\mu)$ is contained in $\Lm(2n)$.
\end{prop}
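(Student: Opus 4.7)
The strategy is to verify both conditions of Lemma~\ref{Lagrangian} for the point $P = \partial_G(\mu)$, using the reflection $r$ of $G$ through the distinguished diameter $d$ to set up a symmetry on matchings. Because $r$ permutes the interior vertices of $G$ and carries edges to edges, $P \mapsto r(P)$ is an involution on the set of almost perfect matchings of $G$. Moreover, the defining property of a symmetric plabic graph is that reflection reverses all colors, so every boundary vertex $a$ has the opposite color from $r(a) = a'$.

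First I would establish the boundary identity
\[\partial(r(P)) \;=\; R(\partial(P)) \;=\; [2n] \setminus \{b' : b \in \partial(P)\}\]
by directly unpacking the definition: $a \in \partial(r(P))$ means $a$ is either a black boundary vertex used in $r(P)$ or a white boundary vertex not used in $r(P)$, and because $a'$ has the opposite color from $a$ and is used in $P$ exactly when $a$ is used in $r(P)$, this reduces to $a' \notin \partial(P)$. Since $\mu$ is symmetric, $t^{r(P)} = t^{P}$, so pairing matchings under $r$ gives
\[\Delta_{J} \;=\; \sum_{\partial(P)=J} t^{P} \;=\; \sum_{\partial(Q)=R(J)} t^{Q} \;=\; \Delta_{R(J)}\]
for every $J \in \binom{[2n]}{n}$.

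Next I would verify condition~(1) of Lemma~\ref{Lagrangian}. Since $G$ is reduced, $f_G \in \Bd^C(2n)$, and so $f_G = u t_{[n]} w^{-1}$ with $u,w \in S_n^C$ and $w$ Grassmannian. Standard positroid combinatorics identifies the lex-first element of the Grassmann necklace of $\mathring{\Pi}^A_{f_G}$ with $u([n])$, and this is the lex-first nonzero Pl\"ucker coordinate $I$ of $P$. Using $u(a') = u(a)'$ from $u \in S_n^C$, one computes $\{a' : a \in u([n])\} = u([n+1,2n]) = [2n] \setminus u([n])$, and therefore $R(u([n])) = u([n])$; this is exactly the statement that $a \in I$ iff $a' \notin I$.

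Finally, given condition~(1), for any $j < k$ in $[n]$ with $i_j < i_k'$, a short set-theoretic computation shows
\[R\bigl((I\setminus\{i_j\}) \cup \{i_k'\}\bigr) \;=\; (I\setminus\{i_k\}) \cup \{i_j'\},\]
so the symmetry $\Delta_J = \Delta_{R(J)}$ from the first step yields exactly condition~(2) of Lemma~\ref{Lagrangian}. Applying the lemma gives $P \in \Lm(2n)$. The main subtlety is the color-and-usage bookkeeping in the first step, since it requires threading the color-reversing nature of $r$ through the definition of $\partial(P)$; once the reflection symmetry of the boundary measurement map is pinned down, the remaining work is routine.
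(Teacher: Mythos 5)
Your proof is correct and follows the same overall structure as the paper's: both verify the two conditions of Lemma~\ref{Lagrangian}, deducing condition (2) from the Pl\"ucker symmetry $\Delta_J(P)=\Delta_{R(J)}(P)$ together with the set identity $R\bigl((I\setminus\{i_j\})\cup\{i_k'\}\bigr)=(I\setminus\{i_k\})\cup\{i_j'\}$. The one substantive difference is that the paper simply cites \citep{KS15} for the identity $\Delta_J(P)=\Delta_{R(J)}(P)$ (remarking that the argument there for positive weights carries over to $\mathbb{C}^\times$), whereas you prove it from scratch via the involution $P\mapsto r(P)$ on almost perfect matchings and the boundary identity $\partial(r(P))=R(\partial(P))$ --- which is in fact the content of the cited argument, so your write-up is the more self-contained of the two. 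For condition (1) the paper reads off the lex-first basis from the trip permutation formula and the symmetry of the chord diagram, while you route through the factorization $f_G=ut_{[n]}w^{-1}$ with $u\in S_n^C$; these are equivalent, though your version leans on the identification of the lex-minimal basis of the matroid of $P$ with $u([n])$, which deserves the brief justification that $P$ lies in the \emph{open} positroid variety $\mathring{\Pi}^A_G$ (so its matroid has the same Grassmann necklace as the positroid), exactly as the paper implicitly assumes.
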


\begin{proof}
Let $I$ be the lex-first element of the matroid of $P$.  Then by results of \citep[Section 16]{Pos06}, we have
\[I = \{ i \in [n] \mid \sigma_G^{-1}(i) > i \text{ or $\sigma_G(i)$ is a white fixed point}\}.\]
Since $G$ is a symmetric plabic graph, the chord diagram of $\sigma_G$ is symmetric about the distinguished diameter $d$.  It follows that $i \in I$ if and only if $i' \not\in I$.  Hence exactly one member of each pair $(i,i')$ is contained in $I$.

Recall that for $J \in {{[2n]}\choose{n}}$, we have $R(J) = [2n]\backslash \{j' \mid j \in J\}$.  It follows from the discussion in \citep[Section 3]{KS15} that $\Delta_{J}(P) = \Delta_{R(J)}(P)$ for all $J \in {{[2n]}\choose{n}}$.  While the statement in that paper is only for positive real edge 
weights, the same argument holds for nonzero complex weights. 

Let $1 \leq j < k \leq n$ such that $i_k' > i_j$.  Let $J = I \backslash \{i_j\} \cup \{i_k'\}$ and 
let $J' = I \backslash \{i_k\} \cup \{i_j'\}$. Then $J' = R(J)$, so $\Delta_J(P) = \Delta_{J'}(P).$  By the lemma above, it follows that the symmetric weighting of $G$ indeed corresponds to a point in $\Lm(2n)$, and the proof is complete.  
\end{proof}

\subsection{Local moves for symmetric plabic graphs}

Our goal is to show that symmetric plabic graphs with symmetric weights give parametrizations of projected Richardson varieties in $\Lm(2n)$, just as ordinary plabic graphs give parametrizations of positroid varieties in $\Gr(k,n)$.  In this section, we define local moves and reductions for symmetric plabic graphs.  

For each move or reduction from \citep{Pos06}, we have a corresponding symmetric move or reduction, defined as follows. Let $G$ be a symmetric plabic graph.  Suppose we can perform an ordinary move or reduction on $G$, such that the affected portion of the graph lies entirely on one side of the distinguished diameter $R$.  Then simultaneously performing the corresponding move or reduction on the opposite side of $R$ yields a new symmetric plabic graph, equivalent to the first.  This gives two moves and two reductions, corresponding to those of Postnikov.  

We have two additional moves and one additional reduction, shown in Figure \ref{symsquare}.  For the first additional move, if we have a square face which is bisected by the diameter $d$, performing a square move at that face and contracting or uncontracting edges as in Figure \ref{symsquare1} yields a symmetric plabic graph.  For the second, suppose we have an edge which crosses the midline, both of whose vertices have valence two.  Then removing both of these vertices again yields a symmetric plabic graph; conversely, we may add a pair of two-valent vertices of opposite colors to an edge which crosses the midline. For the additional reduction, if we have a pair of parallel edges which span $d$, performing a parallel edge reduction again yields the desired graph.  Finally, we may perform  symmetric gauge transformations, by applying the same gauge transformation to a pair of vertices $v_1$ and $v_2$ which are symmetric with respect to the midline.  

\begin{defn}A symmetric plabic graph is \emph{reduced} if it cannot be transformed by symmetric moves into a graph on which one can perform a symmetric reduction.  A symmetric plabic graph is \emph{strongly reduced} if it is reduced as an ordinary plabic graph.
\end{defn}

Each of our moves or reductions are compositions of Postnikov's moves for ordinary plabic graphs.  Suppose $G$ is a strongly reduced.  Then performing a symmetric move, and transforming the edge weights according to Postnikov's rules, carries symmetric weightings of $G$ to symmetric weightings.  

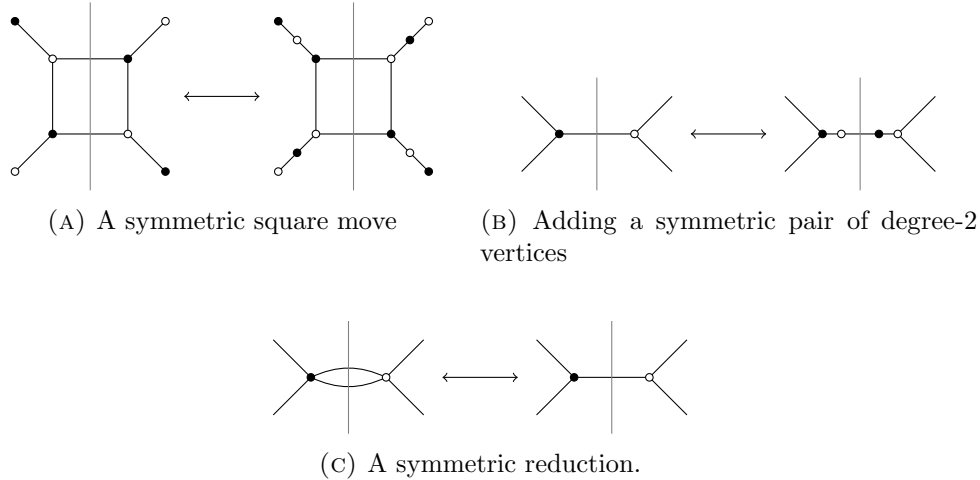
\begin{figure}
\centering
\begin{subfigure}[t]{0.4\textwidth}
\centering
\begin{tikzpicture}[scale = 0.5]
\draw (1,1) -- (1,-1) -- (-1,-1) -- (-1,1) -- (1,1);
\draw (1,1) -- (2,2);
\draw (1,-1) -- (2,-2);
\draw (-1,-1) -- (-2,-2);
\draw (-1,1) -- (-2,2);
\draw [gray, thin] (0,2.5) -- (0,-2.5);
\bdot{1}{1};\wdot{1}{-1};\bdot{-1}{-1};\wdot{-1}{1};
\wdot{2}{2};\bdot{2}{-2};\wdot{-2}{-2};\bdot{-2}{2};
\draw [<->] (2.5,0) -- (4.5,0);
\begin{scope}[xshift = 7 cm]
\draw (1,1) -- (1,-1) -- (-1,-1) -- (-1,1) -- (1,1);
\draw (1,1) -- (2,2);
\draw (1,-1) -- (2,-2);
\draw (-1,-1) -- (-2,-2);
\draw (-1,1) -- (-2,2);
\draw [gray, thin] (0,2.5) -- (0,-2.5);
\wdot{1}{1};\bdot{1}{-1};\wdot{-1}{-1};\bdot{-1}{1};
\bdot{1.5}{1.5};\wdot{1.5}{-1.5};\bdot{-1.5}{-1.5};\wdot{-1.5}{1.5};
\wdot{2}{2};\bdot{2}{-2};\wdot{-2}{-2};\bdot{-2}{2};
\end{scope}
\end{tikzpicture}
\caption{A symmetric square move}
\label{symsquare1}
\end{subfigure}
\begin{subfigure}[t]{0.4\textwidth}
\centering
\begin{tikzpicture}[scale = 0.5]
\draw (-1,0) -- (1,0);
\draw (-2,1) -- (-1,0);
\draw (-2,-1) -- (-1,0);
\draw (1,0) -- (2,1);
\draw (1,0) -- (2,-1);
\draw [gray, thin] (0,1.5) -- (0,-1.5);
\bdot{-1}{0};\wdot{1}{0};
\draw [<->] (2.5,0) -- (4.5,0);
\begin{scope}[xshift = 7 cm]
\draw (-1,0) -- (1,0);
\draw (-2,1) -- (-1,0);
\draw (-2,-1) -- (-1,0);
\draw (1,0) -- (2,1);
\draw (1,0) -- (2,-1);
\draw [gray, thin] (0,1.5) -- (0,-1.5);
\bdot{-1}{0};\wdot{-0.5}{0};\bdot{0.5}{0};\wdot{1}{0};
\end{scope}
\end{tikzpicture}
\caption{Adding a symmetric pair of degree-2 vertices}
\end{subfigure}
\vspace{0.25 in}

\begin{subfigure}{\textwidth}
\centering
\begin{tikzpicture}[scale = 0.5]
\draw (-1,0) [out = 25, in = 155] to (1,0);
\draw (-1,0) [out = -25, in = -155] to (1,0);
\draw (-2,1) -- (-1,0);
\draw (-2,-1) -- (-1,0);
\draw (1,0) -- (2,1);
\draw (1,0) -- (2,-1);
\draw [gray, thin] (0,1.5) -- (0,-1.5);
\bdot{-1}{0};\wdot{1}{0};\
\draw [<->] (2.5,0) -- (4.5,0);
\begin{scope}[xshift = 7 cm]
\draw (-1,0) -- (1,0);
\draw (-2,1) -- (-1,0);
\draw (-2,-1) -- (-1,0);
\draw (1,0) -- (2,1);
\draw (1,0) -- (2,-1);
\draw [gray, thin] (0,1.5) -- (0,-1.5);
\bdot{-1}{0};\wdot{1}{0};
\end{scope}
\end{tikzpicture}
\caption{A symmetric reduction.}
\end{subfigure}
\caption{A symmetric reduction.}
\label{symsquare}
\end{figure}

The goal of this section is to show that a symmetric plabic graph is reduced if and only if it is strongly reduced, and that reduced symmetric plabic graphs with the same bounded affine permutation are equivalent via symmetric local moves.  We make extensive use of the following lemma, which is Lemma 13.5 from \citep{Pos06}.  Note that earlier, we required a bridge from $i$ to $i+1$ to have a white vertex on the leg at $i$, and a black one at $i+1$.  In this section, we also allow bridges which have a black vertex on the leg at $i$, and a white vertex on the leg at $i+1$.

\begin{lem}\label{key}
Let $G$ be a reduced plabic graph with trip permutation $\pi_G$, where $\pi_G$ has no fixed points.  Let $i < j$ be indices such that $\pi_G(i)=j$ or $\pi_G(j)=i$.  
Suppose there is no pair $a,b \in [i+1,j-1]$ such that $\pi_G(a) = b$.  Then $G$ is move-equivalent to a graph with a bridge from $i$ to $i+1$.  
If $\pi_G(i) = j$ and $\pi_G(j)=i$, then $i$ and $j$ are connected by a path whose non-boundary vertices all have degree $2$.
\end{lem}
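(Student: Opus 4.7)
The plan is to exploit two fundamental properties of reduced plabic graphs, both due to Postnikov: any two trips cross at most once, and the local moves preserve both the trip permutation and reducedness. With these in hand, I would locate the trips from $i$ and $i+1$, identify their unique crossing, and push it to the boundary by local moves.

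First I would specialize, without loss of generality, to the case $\pi_G(i) = j$; the case $\pi_G(j) = i$ is symmetric after reversing trip directions. I consider the trip $T_i$ that starts at $i$ and ends at $j$, and the trip $T_{i+1}$ that starts at $i+1$. Because $\pi_G$ has no fixed points and no pair $a, b \in [i+1, j-1]$ satisfies $\pi_G(a) = b$, the endpoint $\pi_G(i+1)$ must lie outside $[i+1, j-1]$. A planarity argument for curves in the closed disk then forces $T_i$ and $T_{i+1}$ to cross, and reducedness forces this crossing to be unique, occurring at some internal vertex $v$.

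Next I would argue that a sequence of square moves, together with degree-two vertex insertions and contractions, can slide this crossing toward the boundary arc between $i$ and $i+1$, eventually realizing $v$ as the unique shared vertex of the legs at $i$ and $i+1$, i.e., a bridge. The hard part will be the bookkeeping that makes this sliding procedure work: at each stage one must verify that the relevant face is bounded by four alternately colored trivalent vertices so a square move applies, and that the process terminates. I plan to handle this by inducting on a suitable notion of distance from $v$ to the target boundary arc through the face complex of $G$; at each step either a square move strictly decreases the distance, or a preparatory insertion or contraction of a degree-two vertex creates the required local configuration. Reducedness is essential to rule out parallel edges and leaves as obstructions.

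For the second assertion, once both $\pi_G(i) = j$ and $\pi_G(j) = i$ hold, the trips $T_i$ and $T_j$ must traverse the same sequence of edges in opposite directions, since every undirected edge carries exactly two oppositely directed trip segments and the turning rule at interior vertices determines these segments uniquely. The hypothesis forbids any other trip from having both endpoints inside $[i+1, j-1]$; combined with the at-most-one-crossing property, this rules out any internal branch point of positive degree on the common path of $T_i$ and $T_j$. Concretely, a vertex of degree greater than two on that path would emit a trip segment leaving the path and, by reducedness, unable to return, forcing it to exit through some boundary vertex, which a short case analysis shows violates the pairing hypothesis. Hence every internal vertex on the path has degree exactly two, as claimed.
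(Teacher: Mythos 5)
This lemma is not proved in the paper at all; the author cites it verbatim as ``Lemma 13.5 from \citep{Pos06}.'' So your proposal cannot be compared against a proof in this paper, and must be judged on its own.

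For the first conclusion, your strategy --- show that the two trips must cross, then push the crossing to the boundary arc between $i$ and $i+1$ by square moves --- is the right general idea, and in fact corresponds to the content of what appears later in the paper as Lemma \ref{crossing}. But you have only outlined the difficult part rather than carried it out: the claim that ``at each step either a square move strictly decreases the distance, or a preparatory insertion or contraction of a degree-two vertex creates the required local configuration'' is exactly where all of the work lies, and there is nothing here that makes it precise or shows termination. You also describe the unique crossing as occurring ``at some internal vertex $v$'' and then want to realize ``$v$ as the unique shared vertex of the legs,'' but crossings of trips in Postnikov's sense happen along shared edges, and a bridge is itself an edge joining a white vertex on one leg to a black vertex on the other, not a single vertex. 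These are signs that the local picture has not been worked through.

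The more serious problem is in the second conclusion. You assert that $T_i$ and $T_j$ ``must traverse the same sequence of edges in opposite directions, since every undirected edge carries exactly two oppositely directed trip segments and the turning rule at interior vertices determines these segments uniquely.'' This reasoning is circular. The rules of the road are not reversible: if a trip turns maximally left at a white vertex $v$, its reversal turns maximally \emph{right} at $v$, so the reverse of a trip is in general not itself a trip. It becomes one precisely when every interior vertex along the trip has degree two --- which is the conclusion you are trying to establish. The fact that $T_j$ starts along the reverse of $T_i$'s last edge and ends along the reverse of $T_i$'s first edge only tells you the two trips agree on the legs at $i$ and $j$; there is no reason from the quoted observations that they agree in between. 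Consequently the later case analysis, which presupposes a common path, has nothing to stand on. A correct treatment would need to argue directly (for instance, by tracking what happens to a third trip exiting the path at a high-degree vertex, and using planarity together with the hypothesis that no $a,b\in[i+1,j-1]$ satisfy $\pi_G(a)=b$) that such a vertex cannot exist, rather than first asserting the common-path claim and then looking for contradictions along it.
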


The lemma below follows from results of Postnikov.  A proof, in the language of bounded affine permutations, may be found in \citep{Lam13}.

\begin{lem}\label{crossing}Let $G$ be a reduced plabic graph with decorated permutation $\pi_G$ and bounded affine permutation $f_G$, and assume $\pi_G$ has no fixed points.  Suppose the chords $i \rightarrow \pi_G(i)$ and $i+1 \rightarrow \pi_G(i+1)$ represent a crossing in $G$.  Then we may transform $G$ into a graph with a bridge that is white at $i$, and black at $i+1$.
\end{lem}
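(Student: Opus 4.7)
My plan is to produce an explicit reduced plabic graph $G'$ with bounded affine permutation $f_G$ that visibly contains an $(i,i+1)$-bridge of the correct coloring, and then invoke Postnikov's move-equivalence theorem \citep[Theorem 13.4]{Pos06} to conclude that $G$ is move-equivalent to $G'$.

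First I would translate the crossing hypothesis into the inequality $f_G(i) < f_G(i+1)$. Checking each of the three configurations in Figure \ref{crossings} and using the recipe defining $f_G$ from $\pi_G$, a crossing at the adjacent boundary indices $i,i+1$ forces the cyclic order $i, i+1, \pi_G(i), \pi_G(i+1)$ on the disk, which in turn gives $f_G(i) < f_G(i+1)$; the no-fixed-point hypothesis rules out degenerate subcases such as $\pi_G(i) = i+1$ with $\pi_G(i+1) = i$ that would otherwise require separate handling. Set $f'' := f_G \circ (i,i+1)$, so that $f''(i) > f''(i+1)$, and choose any reduced plabic graph $G''$ with bounded affine permutation $f''$; such a graph exists because every element of $\Bd(k,n)$ is realized by some reduced plabic graph. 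The interval $[i+1, i]$ is empty, so the lollipop hypothesis of Proposition \ref{canadd} is vacuously satisfied, and we may add an $(i,i+1)$-bridge to $G''$ to produce a reduced plabic graph $G'$ with $f_{G'} = f'' \circ (i,i+1) = f_G$. By the insertion convention stated just before Proposition \ref{canadd}, the new bridge has a white vertex on the leg at $i$ and a black vertex on the leg at $i+1$.

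Finally, since $G$ and $G'$ are both reduced plabic graphs with the same bounded affine permutation, \citep[Theorem 13.4]{Pos06} supplies a sequence of local moves M1 and M2 carrying $G$ to $G'$; this is precisely the claimed move-equivalence. The main obstacle that warrants careful checking is the first step: one has to match each type of crossing in Figure \ref{crossings} to the inequality $f_G(i) < f_G(i+1)$ and confirm that the bridge convention, which assigns white to the smaller boundary index, lines up with the coloring asserted in the lemma. Both are elementary case analyses, but the parity and wrap-around signs must be chased consistently.
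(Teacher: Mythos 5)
Your argument is logically sound, and it is worth noting that the paper does not prove this lemma itself; it cites \citep{Lam13} for a proof in the language of bounded affine permutations. Your strategy---establish $f_G(i)<f_G(i+1)$ from the crossing hypothesis, build a fresh reduced graph $G'$ with bounded affine permutation $f_G$ exhibiting the desired $(i,i+1)$-bridge via Proposition \ref{canadd}, and appeal to Postnikov's move-equivalence theorem---is a valid self-contained route. The construction of $G'$ and the appeal to move-equivalence check out: $f'' = f_G \circ (i,i+1)$ is bounded of the same type, the lollipop condition of Proposition \ref{canadd} is vacuous since $[i+2,i]=\emptyset$, and $f_{G'}=f''\circ(i,i+1)=f_G$.

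There is, however, one incorrect claim in the first step. You write that the no-fixed-point hypothesis \emph{rules out} the subcase $\pi_G(i)=i+1$ with $\pi_G(i+1)=i$. It does not: that is a transposition, not a fixed point, and the hypothesis only forbids $\pi_G(a)=a$. The transposition case is genuinely possible and must be handled (it does satisfy $f_G(i)=i+1 < i+n = f_G(i+1)$, so your conclusion survives). The actual role of the no-fixed-point hypothesis in your argument is elsewhere and more delicate: it guarantees $f_G(i)\neq i$ and $f_G(i+1)\neq i+1+n$, which is exactly what makes $f''$ a bona fide element of $\Bd(k,n)$ (you verify $i\le f''(a)\le a+n$ at $a=i,i+1$) and ensures that any lollipop of $G''$ at $i$ is white and any lollipop at $i+1$ is black, so the bridge-insertion convention of Proposition \ref{canadd} applies and produces the coloring the lemma asserts. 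You should replace the misattributed justification with this one; otherwise the proof reads as if a case you actually need to consider has been excluded.
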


We recall Postnikov's criterion for reducedness, which is Theorem 13.2 of \citep{Pos06}.  

\begin{thm}\label{crit}Let $G$ be a plabic graph which has no leaves, except perhaps some leaves attached to boundary vertices.  Then $G$ is reduced if and only if the following conditions hold.
\begin{enumerate}
\item $G$ has no round-trips.
\item $G$ has no trips which use the same edge more than once, except perhaps for trips corresponding to boundary leaves.
\item $G$ has no pair of trips which cross twice at edges $e_1$ and $e_2$, where $e_1$ and $e_2$ appear in the same order in both trips.
\item If $\pi_G(i) = i$ then $G$ has a lollipop at $i$.  
\end{enumerate}
\end{thm}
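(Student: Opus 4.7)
The plan is to establish both directions, with the ``only if'' direction proceeding by contrapositive: I will show that if any of conditions (1)--(4) fails, then there is a sequence of local moves M1--M2 putting $G$ into a form where a reduction R1 or R2 applies. For the ``if'' direction, I will argue that the four conditions constitute a certificate of minimality that is preserved by moves and incompatible with any reducible configuration.

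For the forward direction, assume $G$ is reduced and consider each condition in turn. If $G$ has a round-trip, I would pick an innermost round-trip $T$ (bounding a disk containing no other round-trip). Looking at the trips crossing into the disk bounded by $T$, I would use Lemma \ref{key} together with square moves M1 to migrate a crossing between $T$ and any incident strand towards $T$, until $T$ becomes either a bigon (two parallel edges, to which R1 applies) or a single loop with a hair (to which R2 applies after M2). The case of a trip traversing the same edge twice reduces to this by noting that such a trip bounds either a disk to one side of the repeated edge or exhibits a bigon at that edge. For the bad double crossing in condition (3), a pair of trips $\tau_1,\tau_2$ crossing twice in the same order at edges $e_1,e_2$ bounds a bigon region; I would again apply Lemma \ref{key} and square moves to isotope the two crossings together until the bigon collapses, producing parallel edges. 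For condition (4), a trip with $\pi_G(i)=i$ that is not a lollipop would re-enter the boundary vertex $i$, and by Lemma \ref{key} applied to the pair $(i,i)$ I would obtain a configuration with a leaf attached by a degree-2 path at $i$, violating reducedness.

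For the backward direction, suppose conditions (1)--(4) hold; I would show $G$ is reduced by a minimality argument. First, local moves M1 and M2 manifestly preserve the trip permutation $\pi_G$ and the list of unordered trip pairs-with-crossings, hence they preserve each of conditions (1)--(4). Next, I would show that an application of R1 or R2 to a graph $G'$ move-equivalent to $G$ strictly decreases the total number of strand crossings; combined with the fact that conditions (1)--(4) force each pair of trips to cross the minimum possible number of times dictated by the permutation $\pi_G$ (as encoded by the chord-diagram crossings described in Figure \ref{crossings}), this would show no reduction can apply to any graph in the move-equivalence class of $G$. Concretely, I would establish that the number of faces of $G$ equals $1 + $(number of alignments in the chord diagram of $\pi_G$), which is a combinatorial invariant depending only on $\pi_G$, and that R1/R2 strictly reduce the face count.

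The main obstacle is the forward direction for conditions (1) and (3): showing that an arbitrary round-trip or bad double crossing, possibly deep inside a complicated graph, can always be manipulated by M1--M2 to expose a bigon or leaf. The technical tool is Lemma \ref{key}, which lets me shuffle crossings along strands, but the case analysis depends delicately on the colors of the internal vertices bounding the offending region, and on ensuring that square moves applied in the interior do not inadvertently create new round-trips or bad double crossings elsewhere. I expect the cleanest way to handle this is to induct on the area (number of faces) of the innermost bad region, showing that any non-trivial bad region admits a square move that strictly reduces this area while preserving the bad configuration in the new, smaller region.
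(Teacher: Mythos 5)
This theorem is not proved in the paper at all: it is quoted verbatim as Postnikov's criterion, Theorem 13.2 of \citep{Pos06}, so there is no in-paper argument to compare your proposal against. Judged on its own terms, your sketch does capture the broad shape of Postnikov's actual strategy (forbidden configurations are invariant under moves; a monotone statistic decreases under reductions; hence no graph in the move-equivalence class admits a reduction), but several of the load-bearing steps are either circular or incorrect as stated.

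First, the reliance on Lemma \ref{key} is problematic in both directions. That lemma (Postnikov's Lemma 13.5) is proved in \citep{Pos06} \emph{using} the reducedness criterion, so invoking it here is circular; and even setting that aside, its hypotheses concern a boundary-to-boundary trip with $\pi_G(i)=j$ for $i<j$ and say nothing about round-trips (closed cycles with no boundary endpoint) or about fixed points, so ``applying Lemma \ref{key} to the pair $(i,i)$'' and using it to collapse an innermost round-trip are not legitimate applications. Second, in the backward direction your two key claims need repair: the assertion that M1--M2 preserve ``the list of unordered trip pairs-with-crossings'' is false (a square move genuinely changes which strands cross and where; what is true, and what must be proved, is that the moves preserve the presence or absence of each forbidden configuration (1)--(4)); and the face-count formula is misstated --- for a reduced graph of type $(k,n)$ the number of faces is $k(n-k)+1$ minus the number of alignments of $\pi_G$, not $1$ plus the number of alignments. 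More fundamentally, establishing that every graph satisfying (1)--(4) attains this minimal face count is essentially the entire content of the theorem, so your backward direction defers rather than supplies the real argument. As written, the proposal is a plausible roadmap but not a proof.
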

Note in particular that while a trip may have a self-intersection at a vertex, no trip in a reduced plabic graph may cross itself.
The following is an easy topological consequence of the above result.  See Figure \ref{alignments} for the concept of alignment.

\begin{lem}\label{aligned}Let $G$ be a reduced plabic graph, and suppose the chords $i \rightarrow \pi_G(i)$ and $j \rightarrow \pi_G(j)$ are aligned in the sense of \citep[Section 16]{Pos06}.  Then the trips from $i$ to $\pi_G(i)$ and $j$ to $\pi_G(j)$ do not cross in $G$.
\end{lem}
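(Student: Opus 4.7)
I will proceed by contradiction: suppose that in the reduced plabic graph $G$, the trip $T_i$ from $i$ to $\pi_G(i)$ and the trip $T_j$ from $j$ to $\pi_G(j)$ cross at some edge, while the chords $i \to \pi_G(i)$ and $j \to \pi_G(j)$ are aligned.

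First I would argue that each trip in a reduced plabic graph is a simple arc in the disk $D$. Condition (2) of Theorem \ref{crit} forbids a trip from using the same edge twice, and since the turning rule at each interior vertex (maximally left at white, maximally right at black) determines the outgoing edge uniquely from the incoming edge, a self-intersection at an interior vertex would force the trip to reuse some edge, a contradiction. Hence $T_i$ and $T_j$ are simple arcs with endpoints on $\partial D$.

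Next I would apply a topological parity argument. The simple arc $T_i$ separates $D$ into two connected regions, and alignment of the chords (as in Figure \ref{alignments}) is exactly the statement that $j$ and $\pi_G(j)$ lie in the same region. Any continuous path from $j$ to $\pi_G(j)$ therefore meets $T_i$ in an even number of points, so $T_j$ must in fact cross $T_i$ at least twice, at edges $e_1, e_2$, which I label so that $T_j$ visits $e_1$ before $e_2$.

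The decisive step is to extract from this double crossing a violation of Theorem \ref{crit}. The sub-arc of $T_i$ joining $e_1$ to $e_2$ together with the sub-arc of $T_j$ between $e_1$ and $e_2$ bounds a bigon in $D$; combining the aligned cyclic order of the four boundary endpoints with the orientations of $T_i$ and $T_j$ on the boundary of this bigon forces $T_i$ to visit $e_1$ before $e_2$, matching the order on $T_j$. This is exactly the prohibited ``same order'' double crossing of condition (3) of Theorem \ref{crit}, giving the desired contradiction. The main obstacle is this last orientation analysis: one must verify that the aligned configuration cannot instead produce a bigon with $T_i$ and $T_j$ traversing the crossings in opposite orders. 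An alternative route, which bypasses this direct analysis, is to invoke Postnikov's move-equivalence of reduced plabic graphs with a common decorated permutation (cited after Theorem \ref{crit}) and observe that any such bigon can be shrunk by successive square moves and degree-two contractions into a pair of parallel edges removable by reduction (R1), again contradicting the reducedness of $G$.
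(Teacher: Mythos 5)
The paper gives no written proof of Lemma \ref{aligned}---it is asserted to be ``an easy topological consequence'' of Theorem \ref{crit}---and your outline (parity of intersections, then a forbidden double crossing) is surely the intended strategy. But there is a genuine gap at exactly the step you flag as the main obstacle, and neither of your two routes closes it. The assertion that the aligned cyclic order of the four endpoints forces $T_i$ to visit $e_1$ before $e_2$ is false as a statement about arcs in a disk: with $j$ and $\pi_G(j)$ on the same side of $T_i$, the arc $T_j$ may enter the region cut off by $T_i$ near the $\pi_G(i)$-end and exit near the $i$-end, giving two transversal crossings that occur in \emph{opposite} orders along the two arcs. That configuration is compatible with alignment and is not excluded by condition (3) of Theorem \ref{crit}, which only forbids same-order double crossings. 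Ruling it out requires the combinatorics of trips, not just the endpoint pattern: two trips traverse a common edge in opposite directions, and the turning rules determine on which side of the shared edge each strand continues; one must track these data around the bigon (or run an innermost-region induction in the style of Postnikov's reducedness arguments) to see that the opposite-order picture cannot arise. Your fallback---shrinking the bigon by square moves to a parallel pair removable by (R1)---is the right flavor of argument but is only asserted; note that if it applied verbatim to opposite-order bigons, the qualifier ``in the same order'' in condition (3) would be redundant, which should signal that some orientation bookkeeping is unavoidable.

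A smaller inaccuracy: trips in a reduced plabic graph need not be simple arcs. A trip may pass through an interior vertex twice, entering and leaving by different edges each time, without reusing any edge, so your argument that condition (2) forces simplicity does not go through. What is true, and what the paper records immediately after Theorem \ref{crit}, is that a trip never \emph{crosses} itself; it can therefore be perturbed to a simple arc, and your separation and parity argument survives with that adjustment.
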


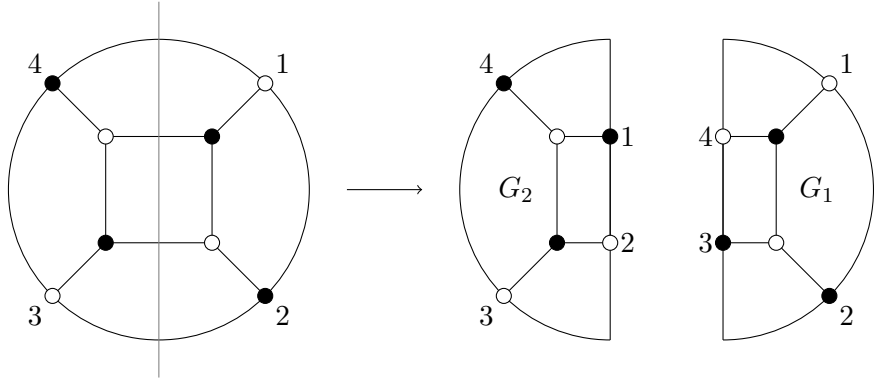
\begin{figure}[ht]
\centering
\begin{tikzpicture}
\draw (0,0) circle (2 cm);
\draw (-0.707,-0.707) rectangle (0.707, 0.707);
\draw (-0.707,-0.707) -- (-1.414,-1.414);
\draw (0.707,-0.707) -- (1.414,-1.414);
\draw (-0.707,0.707) -- (-1.414,1.414);
\draw (0.707,0.707) -- (1.414,1.414);
\bdot{-0.707}{-0.707}\bdot{0.707}{0.707}\bdot{1.414}{-1.414}\bdot{-1.414}{1.414}
\wdot{0.707}{-0.707}\wdot{-0.707}{0.707}\wdot{1.414}{1.414}\wdot{-1.414}{-1.414}
\node [above left] at (-1.414,1.414) {4};
\node [below left] at (-1.414,-1.414) {3};
\node [below right] at (1.414,-1.414) {2};
\node [above right] at (1.414,1.414) {1};
\draw [gray, thin] (0,-2.5) -- (0,2.5);
\draw [->] (2.5,0) -- (3.5,0);
\begin{scope}[xshift = 6 cm]
\draw (0,-2) arc (270:90:2);
\draw (0,-2) -- (0,2);
\draw (-0.707,-0.707) rectangle (0.0, 0.707);
\draw (-0.707,-0.707) -- (-1.414,-1.414);
\draw (-0.707,0.707) -- (-1.414,1.414);
\bdot{-0.707}{-0.707}\bdot{-1.414}{1.414}
\wdot{-0.707}{0.707}\wdot{-1.414}{-1.414}
\node [above left] at (-1.414,1.414) {4};
\node [below left] at (-1.414,-1.414) {3};
\bdot{0}{0.707};\wdot{0}{-0.707};
\node[right] at (0,0.707) {$1$};
\node[right] at (0,-0.707) {$2$};
\node at (-1.25,0) {$G_2$};
\end{scope}
\begin{scope}[xshift = 7.5 cm]
\draw (0,-2) arc (-90:90:2);
\draw (0,-2) -- (0,2);
\draw (0.707,-0.707) rectangle (0.0, 0.707);
\draw (0.707,-0.707) -- (1.414,-1.414);
\draw (0.707,0.707) -- (1.414,1.414);
\wdot{0.707}{-0.707}\wdot{1.414}{1.414}
\bdot{0.707}{0.707}\bdot{1.414}{-1.414}
\node [above right] at (1.414,1.414) {$1$};
\node [below right] at (1.414,-1.414) {$2$};
\wdot{0}{0.707};\bdot{0}{-0.707};
\node[left] at (0,0.707) {$4$};
\node[left] at (0,-0.707) {$3$};
\node at (1.25,0) {$G_1$};
\end{scope}
\end{tikzpicture}
\caption{The distinguished diameter $d$ divides a symmetric plabic graph into two regions, which correspond to a pair of plabic graphs $G_1$ and $G_2$.}
\label{twosides}
\end{figure}

Let $G$ be a symmetric plabic graph.  The diameter $d$ of the disc divides $G$ into two regions.  Consider the region to the right of $d$.  Drawing a new boundary segment along $d$, and adding boundary vertices wherever an edge of $G$ intersects $d$, we obtain a new plabic graph $G_1$.  Similarly, we may define a plabic graph $G_2$ corresponding to the region of $G$ to the left of $d$.  See Figure \ref{twosides}.  

Notice that any move or reduction we may perform in $G_1$ or $G_2$ corresponds to a valid move or reduction in $G$.  By inserting pairs of degree-two vertices along edges that cross $d$, we may assume that a move or reduction in $G_1$ does not affect $G_2$, and vice versa.  Hence performing the corresponding move on each side of $d$ gives a symmetric move or reduction in $G$.  Thus if $G$ is a reduced symmetric plabic graph, $G_1$ and $G_2$ are reduced as ordinary plabic graphs.  Since strongly reduced implies reduced, this is also true for strongly reduced graphs.

\begin{prop}\label{bridge} Let $G$ be a strongly reduced symmetric plabic graph with more than one face.  Then $G$ may be transformed by symmetric moves into a symmetric plabic graph which either has a symmetric pair of bridges not crossing the midline $d$, or a single bridge which does cross $d$.  In the latter case, we may assume the two endpoints of the bridge are connected by a path containing only two-valent vertices.
\end{prop}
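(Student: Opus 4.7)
The strategy is to apply Lemma \ref{key} to an appropriately chosen chord of $\sigma_G$ and then invoke the mirror symmetry of $G$. Since $G$ is strongly reduced with more than one face, $\sigma_G$ is not the identity, so at least one non-fixed chord exists. The symmetry of $G$ translates to $\sigma_G(i') = (\sigma_G(i))'$ for every boundary vertex $i$; in particular, lollipops of $G$ occur in symmetric black/white pairs and may temporarily be set aside to meet the no-fixed-points hypothesis of Lemma \ref{key}. Among all pairs $i < j$ with $\sigma_G(i) = j$ or $\sigma_G(j) = i$, I would choose one of minimal span $s := j - i$. Any chord $\sigma_G(a) = b$ with $a, b \in [i+1, j-1]$ would have $b - a < s$, contradicting minimality, so the hypothesis of Lemma \ref{key} is satisfied.

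\textbf{Case A:} the minimal-span chord has both endpoints in $[n]$ (or, symmetrically, both in $[n+1, 2n]$). Then the chord lies entirely in one half of $G$, and the moves supplied by Lemma \ref{key} producing the bridge $(i, i+1)$ can be taken to act within that half. By the mirror symmetry of $G$, performing the corresponding mirrored moves simultaneously in the opposite half preserves symmetry throughout and produces the mirror bridge at $(2n-i, 2n+1-i)$. Neither bridge crosses $d$, giving the desired symmetric pair.

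\textbf{Case B:} every minimal-span chord has one endpoint in $[n]$ and the other in $[n+1, 2n]$. If $s \geq 2$ and the chosen chord $(i, j)$ has $i = n$, then $j = n + s \geq n + 2$, and its reflection $(2n+1-j, 2n+1-i) = (n-s+1, n+1)$ is also minimal-span with small endpoint at most $n - 1$; otherwise $i \leq n - 1$ already. We may therefore select a minimal-span chord whose small endpoint lies in $[n-1]$, so that the bridge $(i, i+1)$ lies entirely in the right half and we conclude as in Case A. If instead $s = 1$, the chord must be $(n, n+1)$, so $\sigma_G(n) = n+1$, and by the symmetry relation $\sigma_G(n+1) = n$ as well. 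Both directional hypotheses of Lemma \ref{key} thus hold, producing a bridge at $(n, n+1)$ crossing $d$ together with the extra conclusion that its endpoints are joined by a path of two-valent vertices -- exactly the additional property required.

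The main obstacle is verifying the symmetrization step: that a move sequence produced by Lemma \ref{key} can be executed while preserving symmetry of $G$. In Case A and in Case B with $s \geq 2$, this is handled by confining the moves to the half containing the chosen chord and performing their mirror images in the opposite half simultaneously; equivalently, one can apply Lemma \ref{key} inside the ordinary reduced plabic graph $G_1$ obtained by slicing $G$ along $d$, then glue back with the mirrored $G_2$. In Case B with $s = 1$, the self-symmetry of the chord $(n, n+1)$ and of the local structure of $G$ around $d$ permit the moves of Lemma \ref{key} to be taken as symmetric local moves in the sense of the symmetric moves introduced in Section \ref{Cbdry}.
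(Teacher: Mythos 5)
Your overall strategy — take a minimal-span non-fixed chord of $\sigma_G$ and feed it into Lemma~\ref{key} — is a reasonable reorganization, and Case~A as well as the $s=1$ subcase of Case~B are fine. But the $s\geq 2$ subcase of Case~B has a genuine gap, and it is precisely the step you flag as the ``main obstacle.''

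In Case~A you may legitimately ``confine the moves to one half'' because the chord $(i,j)$ with both endpoints in $[n]$ lies entirely in $G_1$ (a trip joining two right-half vertices cannot cross $d$, since a strongly reduced symmetric graph has no trip crossing $d$ twice). The minimality of $s$ then immediately gives the hypothesis of Lemma~\ref{key} \emph{for $G_1$}, and you are done by mirroring. In Case~B with $s\geq 2$, however, the chord $(i,j)$ has $j\in[n+1,2n]$, so it crosses $d$: it does not live in either $G_1$ or $G_2$, and the phrase ``confining the moves to the half containing the chosen chord'' has no content. Your alternative, ``equivalently, one can apply Lemma~\ref{key} inside the ordinary reduced plabic graph $G_1$,'' would require applying the lemma to the $G_1$-chord $(i,r_k)$, where $r_k$ is the boundary vertex of $G_1$ on $d$ where the trip from $i$ exits. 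But the hypothesis of Lemma~\ref{key} now has to be checked with respect to $\pi_{G_1}$, not $\sigma_G$, and this is not a consequence of minimality of $s$ in $G$. Specifically, minimality rules out any $G_1$-chord with both endpoints in $[i+1,n]$, and the ``no trip crosses $d$ twice'' property rules out any $G_1$-chord with both endpoints on $d$; but it does \emph{not} rule out a $G_1$-chord $(a,r_\ell)$ with $a\in[i+1,n]$ and $\ell<k$. Such a chord corresponds in $G$ to a trip from $a$ to some $c>j$, whose span $c-a$ can be $\geq s$, so minimality gives no contradiction. You therefore cannot conclude from minimality that Lemma~\ref{key} is applicable inside $G_1$.

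This is exactly the point the paper's proof is organized around: it never works with chords of $\sigma_G$ crossing $d$, but instead with the half-graph $G_1$ directly. It first handles the case where some pair on $B_1$ satisfies the hypothesis of Lemma~\ref{key} in $G_1$ (your Case~A), and in the remaining case it shows that $\pi_{G_1}$ necessarily pairs $B_1$-vertices with $d$-vertices, then picks the $d$-vertex $r_1$ nearest $n$ and argues that either $(t_a,r_1)$ with $a<n$ satisfies the hypothesis in $G_1$ (reducing back to the mirrored-pair case) or $\pi_{G_1}$ matches $t_n$ with $r_1$, which is the crossing-bridge/2-valent-path case. To salvage your argument in the spirit you propose, you would need to replace ``minimal span in $G$'' by a minimality argument over chords of $G_1$ itself, at which point you are essentially re-deriving the paper's case analysis of $\pi_{G_1}$.
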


\begin{proof}Since $G$ is strongly reduced, every fixed point of $G$ corresponds to a boundary leaf, so we may assume without loss of generality that $G$ has no fixed points.  Let $G_1$ and $G_2$ be as above.  Let $B_1$ be the segment of the boundary of $G_1$ which lies on the boundary of $G$, and define $B_2$ similarly.  Suppose some pair of boundary vertices $1 \leq i < j \leq n$ on $B_1$ satisfies the hypotheses of Lemma \ref{key}.  Then we can transform $G_1$ into a graph with a bridge $(i,k)$ for some $i < k < j$.  Performing the corresponding sequence of symmetric moves in $G$ yields a symmetric plabic graph with two commuting bridges $(i,k)$ and $(k',i')$.  

Next, suppose no such pair $(i,j)$ exists.  Then $\pi_{G_1}$ does not map any vertex on $B_1$ to another vertex in $B_1$.  Note that since $G$ is strongly reduced, no trip in $G$ may cross the midline more than once.  Indeed, suppose $T$ is such a trip, and suppose $T$ crosses the midline at edges $e_1$ and $e_2$.  Let $T'$ be the trip which is the mirror image of $T'$ (such a trip exists, since $G$ is symmetric).  Then $T'$ also crosses the midline at edges $e_1$ and $e_2$, in that order; this contradicts the reducedness criterion.  Hence the trip permutation cannot map any boundary vertex of $G_1$ which lies on $d$ to another vertex on $d$.  By assumption, since no pair of vertices on $B_1$ satisfies the conditions of Lemma \ref{key}, the permutation $\pi_{G_1}$ cannot map any vertex on $B_1$ to another vertex on $B_1$.  Hence $\pi_{G_1}$ maps each vertex on $B_1$ to a vertex on $d$, and vice versa.  

Number the vertices of $G_1$ clockwise so that each vertex on $B_1$ comes before each vertex on $d$.  Let $r_1$ be the first vertex on $d$, and let $t_1,\ldots,t_n$ be the vertices on $B_1$.  If $t_a =\pi_{G_1}(r_1) \neq t_n$ then the pair $(t_a,r_1)$ satisfies the condition of Lemma \ref{key}, so we can transform $G_1$ into a graph with a bridge adjacent to $t_a$.  Performing the corresponding sequence of symmetric moves transforms $G$ to a graph with two symmetric bridges that do not cross the midline, and we are done in this case.  Similarly, if $t_b = \pi_{G_1}^{-1}(r_1) \neq t_n$, then $(t_b,r_1)$ satisfies the condition, and we are done as above.

There remains the case where $\pi_{G_1}(t_n) = r_1$ and $\pi_{G_1}(r_1) = t_n$.  In this case, we may transform $G_1$ to a graph where $t_n$ and $r_1$ are connected by a two-valent path.  Performing the corresponding moves on $G_2$, see that $G$ is move equivalent to a symmetric graph where the vertices corresponding to $t_n$ and its reflection over the midline $d$ are connected by a path with all vertices two-valent, and we are done.
\end{proof}

\begin{prop}\label{symred}A symmetric plabic graph $G$ is reduced if and only if it is strongly reduced.
\end{prop}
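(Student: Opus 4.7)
The plan is two-directional. The forward direction---strongly reduced implies reduced---is essentially definitional: each symmetric move is a composition of ordinary moves (either a mirrored pair on opposite sides of $d$, or a single midline-crossing move from Figure \ref{symsquare}(a,b)), and each symmetric reduction is either a mirrored pair of ordinary reductions or the single midline parallel-edge reduction from Figure \ref{symsquare}(c). Consequently any symmetric move-and-reduce sequence on $G$ is an ordinary move-and-reduce sequence, so if $G$ is strongly reduced it is also reduced as a symmetric plabic graph.

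For the reverse direction I will argue the contrapositive, using Postnikov's reducedness criterion (Theorem \ref{crit}). Assume $G$ is not strongly reduced. Then $G$ fails one of Postnikov's four conditions: it has a round-trip, a trip using some edge twice, a pair of trips crossing twice at edges in the same order, or a non-lollipop fixed point of $\pi_G$. In each case, the symmetry of $G$ produces a mirror witness under the reflection $r$. In Postnikov's proof of Theorem \ref{crit}, each witness is converted into an ordinary reduction after a suitable sequence of ordinary moves; I will carry out the analogous conversion simultaneously for the witness and its mirror and show that the result is realizable as a sequence of symmetric moves ending in a symmetric reduction.

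Concretely, if the witness lies strictly on one side of $d$ then the mirror witness lies strictly on the other side and is disjoint, so the two mirrored move sequences can be applied in parallel as symmetric M1/M2 moves, and the two mirrored reductions form a symmetric reduction of the first or second type. If instead the witness is invariant under $r$, then the relevant moves and the final reduction must be performed as midline-crossing moves (Figure \ref{symsquare}(a,b)) and the midline parallel-edge reduction (Figure \ref{symsquare}(c)). Crucially, since no vertex of a symmetric plabic graph lies on $d$, a self-mirror ordinary reduction cannot be a leaf removal; it must be a parallel-edge reduction on a pair of edges that both cross $d$, matching exactly the midline reduction that is available.

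The main obstacle is the case analysis for self-mirror witnesses: when the failure of Postnikov's criterion straddles the midline, one must verify that each step of Postnikov's conversion into a reduction can be realized using the midline-crossing moves, possibly by inserting symmetric M2 moves to first split the offending region away from $d$. The key technical input is the observation (used in the proof of Proposition \ref{bridge}) that no trip in a reduced symmetric plabic graph can cross $d$ more than once, which controls how trips interact with the axis of symmetry and ensures that the self-mirror cases really do collapse to the single configuration of a parallel-edge pair straddling $d$.
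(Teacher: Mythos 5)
Your forward direction is fine and matches the paper. The problem is in the reverse direction, where your case division is incomplete. You split the failure of Postnikov's criterion into two cases: the witness lies strictly on one side of $d$ (handle it in $G_1$ and mirror), or the witness is invariant under $r$ (handle it with midline moves). But a bad pair of trips need not be of either kind. The paper's hardest case is exactly the one you omit: two trips $T_1,T_2$ that cross twice in the same order, with one crossing on each side of $d$, where $T_2$ is \emph{not} the mirror image of $T_1$. Such a witness is neither one-sided nor self-mirror, and "applying the mirrored move sequences in parallel" does not work because the two sequences are not disjoint (both involve the region near $d$) and their union is not a sequence of symmetric moves. The paper handles this case by a separate argument about the trips of $G_1$ originating on $d$, showing some consecutive pair $r_i,r_{i+1}$ must cross, introducing a bridge there, performing symmetric square moves, and deriving a contradiction from the maximality of $i$ --- none of which is present in, or recoverable from, your outline.

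A second, related gap: your "key technical input" is that no trip in a reduced symmetric plabic graph crosses $d$ more than once. That fact is proved for \emph{strongly reduced} graphs, whereas in the reverse direction you are precisely in the situation where $G$ is not strongly reduced; a trip may well cross $d$ twice, and this is the paper's other main case. There the paper does not simply find a parallel-edge pair straddling $d$; it takes the subgraph $H$ bounded by the two offending trip segments, argues $H$ is itself a symmetric plabic graph with fewer faces, invokes an induction on the number of faces together with Proposition~\ref{bridge} to shrink $H$ to a single face by symmetric square moves, and only then obtains the parallel-edge reduction. Your proposal asserts that the self-mirror configurations "collapse to the single configuration of a parallel-edge pair straddling $d$," but that collapse is the content of the inductive argument, not a consequence of the observations you cite. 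As written, the proposal is a plausible strategy statement rather than a proof; the two missing ingredients are the induction on faces (for trips crossing $d$ twice) and the alignment/crossing analysis on $d$ (for non-mirror bad pairs with crossings on opposite sides).
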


\begin{proof}A symmetric plabic graph which is strongly reduced is certainly reduced, since all symmetric reductions are composition of Postnikov's reductions.  We must prove the converse.  That is, suppose $G$ is a symmetric plabic graph with no non-boundary leaves which does not satisfy the conditions of Theorem \ref{crit}.  We claim that we can transform $G$ into a graph $G'$ upon which we may perform a symmetric reduction. 
We induce on the number of faces of $G$.  If $G$ has a single face, then $G$ is a lollipop graph, and the result is trivial.  Suppose $G$ has $m$ faces, and suppose the result holds for graphs with fewer than $m$ faces.  

Let $G,$ $G_1$ and $G_2$ be as above.  If $G_1$ is non-reduced as a plabic graph, we may transform $G_1$ into a graph on which we can perform some reduction.  Hence we may transform $G$ by symmetric moves into a graph where we may perform some symmetric reduction, and $G$ is non-reduced as a symmetric graph.  We may thus assume $G_1$ and $G_2$ are reduced.

Suppose there is a trip $T_1$ in $G$ which crosses the midline $d$ more than once, with consecutive crossings $e_1$ and $e_2$.  Let $T_2$ be the trip which is the mirror image of $T_1$ in $G$.  (Such a trip exists, since $G$ is symmetric.)  Then $T_1$ and $T_2$ cross at edges $e_1$ and $e_2$, which appear in the same order in both trips.  Note that since $G_1$ and $G_2$ are reduced, any round trip, trip which uses the same edge twice, or trip which starts and ends at the same point in $G$ must cross $d$ at least twice, and hence induce some instance of this configuration.  

Let $T_1$ and $T_2$ be as above.  Let $S_1$ by the segment of $T_1$ between $e_1$ and $e_2$ inclusive, and define $S_2$ similarly.  Uncontracting edges, we may eliminate any self-intersections of $S_1$ or $S_2$.  Hence we may assume the area bounded by $S_1$ and $S_2$ is homeomorphic to a disk \citep[Section 13]{Pos06}.  Let $H$ be the subgraph of $G$ bounded by $S_1$ and $S_2$.  We may further uncontract edges to ensure that each vertex on $S_1$ or $S_2$ is adjacent to at most one vertex inside $H$.  Hence the subgraph $H$ of $G$ bounded by $S_1$ and $S_2$ is a reduced symmetric plabic graph, which is strongly reduced by induction.  Note that since $G$ has no leaves, the trip permutation of $H$ has no fixed points.  We claim that we can reduce to the case where $H$ has only one face.

Suppose without loss of generality that $S_1$ is a clockwise trip.  The trip $T_1$ alternates between black and white vertices.  All vertices on $S_1$ incident to edges in $H$ are white, while those adjacent to edges outside of $H$ are black. Moreover, by contracting and un-contracting edges, we may ensure that each vertex on $S_1$ or $S_2$ has degree $3$.

Suppose $H$ has more than once face.  Then we may transform $H$ into a graph with either a pair of commuting bridges on either side of the midline, or a valent-two path between two neighboring vertices on either side of the midline, by Lemma \ref{bridge}.  If, after these transformations, the graphs $G_1$ and $G_2$ are no longer reduced, we are done, so suppose $G_1$ and $G_2$ remained reduced.

Suppose the first case holds, so that $H$ has a pair of commuting bridges.  After contracting some edges, each bridge gives a square face of $G$, two of whose sides are on the boundary of $H$.  (Note that we must obtain such a square face when we contract edges; the other option, a pair of parallel edges, would contradict the fact that $G_1$ is reduced after the transformation.)  Performing a square move at this face, and the symmetric move on the other side, we reduce the number of faces of $H$ by two.

Next, suppose the second case holds.  Then we have a two-valent path between the boundary vertex of $H$ on $S_1$ and its reflection through the midline.  Symmetrically removing two-valent vertices and contracting edges, we obtain a symmetric square face in $G$, three of whose edges are part of the boundary of $H$.  Performing a symmetric square move at this face reduces the number of faces of $H$ by $1$.

Hence $H$ has a single face.  But this, in turn, means that $e_1$ and $e_2$ are a pair of parallel edges.  Hence we may perform a parallel edge reduction, and this case is complete.  

Next suppose that no two trips in $G$ cross the midline more than once.  Since $G$ is not reduced, $G$ must have two trips $T_1$ and $T_2$ which cross twice, with the two crossings occurring in the same order in both trips.  Moreover, $T_1$ and $T_2$ must cross once at an edge to the right of $d$, and once at and edge the left of $d$. By symmetry, we may assume that $T_1$ and $T_2$ originate to the left of $d$, cross once to the left of $d$, and then cross again to the right of $d$. Hence, two trips in $G_1$ which originate on $d$ must cross.

Let $r_1,\ldots,r_m$ be the boundary vertices of $G_1$ which lie on $d$, from bottom to top.  Let $B_1$ be the segment of the boundary of $G_1$ which coincides with the boundary of $G$.  Each trip in $G_1$ which originates on $d$ must end on $B_1$.  Hence each pair of trips originating on $d$ represents either an alignment or a crossing.  Moreover, the trips originating at $r_1,\ldots,r_{i+1}$ cannot all represent alignments: by Lemma \ref{aligned} and the previous paragraph, some of these trips must cross.

Consider the trips in $G_1$ which originate at $r_1,\ldots,r_n$.  Let $i$ be the first index such that $\pi_{G_1}$ such that the trips originating at $r_i$ and $r_{i+1}$ form a crossing.  Then the trips originating at $r_1,\ldots,r_i$ are pairwise aligned. Assume that $i$ is maximal among all symmetric graphs which are equivalent to $G$ by symmetric moves, and which satisfy the conditions that no trip crosses $d$ more than once and the subgraph on each side of $d$ is reduced.  We may transform $G_1$ into a graph with a bridge $(r_i,r_{i+1})$ which is white at $r_{i}$ and black at $r_{i+1}$, and perform the corresponding transformation on $G_2$.  Performing a symmetric square move in $G$ then yields a new graph $G^*$.  Note that no trip in $G^*$ crosses the midline more than once.  

Now $G_1^*$ is identical to $G_1$, except that the bridge at $(r_i,r_{i+1})$ is now black at $r_i$ and white at $r_{i+1}$. If $G_1^*$ is non-reduced, then $G$ is non-reduced as a symmetric plabic graph, and we are done.  Otherwise, suppose the trips starting at $r_i$ and $r_{i-1}$ are now aligned.  Then the trips originating at $r_1,\ldots,r_{i+1}$ are pairwise aligned, contradicting the maximality of $i$.  Hence the trips which originate at $r_{i-1}$ and $r_i$ must form a crossing. We may thus transform $G_1^*$ into a graph with a bridge at $(i-1,i)$, and perform a symmetric square move in $G^*$.  Continuing in this fashion, we may successively uncross trips starting at $r_{\ell}$ and $r_{\ell-1}$, for $\ell = i,i-1,\ldots,2$.  This process must eventually yield a graph $\overline{G}$ where $\overline{G}_1$ is non-reduced. Otherwise, we could may transform $G$ by symmetric moves into a graph $\overline{G}$ with $\overline{G}_1$ reduced, no trip crossing the midline more than once, and trips originating at $r_1,\ldots,r_{i+1}$ pairwise aligned, a contradiction.  This completes the inductive step, and with it the proof.
\end{proof}

\begin{prop}Let $G$ and $G'$ be two reduced symmetric plabic graphs.  Then $G$ and $G'$ have the same bounded affine permutation if and only if we can transform $G$ into $G'$ by a series of symmetric moves.
\end{prop}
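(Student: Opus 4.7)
The forward direction is immediate: each symmetric move is, by construction, a finite composition of Postnikov's ordinary moves (possibly combined with a symmetric gauge transformation), and those moves preserve the trip permutation $\sigma_G$ and hence the bounded affine permutation $f_G$. Since symmetric moves also preserve the reflective color-reversing symmetry, they preserve the element of $\Bd^C(2n)$ associated to $G$.

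For the reverse direction the plan is to induct on the length $\ell^{\widetilde{C}}(f)$ of the common bounded affine permutation $f=f_G=f_{G'}$. Equivalently, by the grading result for $\Q^C(2n)\cong \Bd^C(2n)$, we induct on the codimension of $\mathring{\Pi}^C_f$ in $\Lm(2n)$. The base case is when $f$ has no non-fixed points, so $f$ corresponds to a symmetric lollipop graph; two such graphs with the same decorated permutation are literally identical, so there is nothing to do. For the inductive step we use Proposition \ref{bridge}: after applying symmetric moves, we may assume that $G$ has either a symmetric pair of bridges $(a,b),(b',a')$ not crossing $d$, or a single symmetric bridge $(a,a')$ crossing $d$ whose endpoints are joined by a two-valent path. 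Removing the bridge (or symmetric pair) and contracting the resulting degree-two vertices yields a reduced symmetric plabic graph $G^{-}$ whose bounded affine permutation $f^{-}$ is obtained from $f$ by right-multiplication by $s_{(a,a')}$ or $s_{(a,b)}s_{(b',a')}$; by the discussion preceding the definition of a symmetric bridge graph, $f^{-}\lessdot f$ in $\Bd^C(2n)$, so $\ell^{\widetilde{C}}(f^{-})<\ell^{\widetilde{C}}(f)$.

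To close the induction we need to choose, canonically from $f$, a specific covering $f^{-}\lessdot f$ and show that \emph{every} reduced symmetric plabic graph with bounded affine permutation $f$ can be moved to one displaying the corresponding bridge(s). One natural choice is the lexicographically smallest pair $(a,b)$ with $a\in[n]$, $a<b\leq a'$, $f(a)>f(b)$, and all indices strictly between $a$ and $b$ (respectively, between $b'$ and $a'$) fixed by $f$; this data is intrinsic to $f$, and encodes whether the required bridge crosses $d$ (the case $b=a'$) or comes as a symmetric pair (the case $b<a'$). Using Lemmas \ref{key} and \ref{crossing} applied simultaneously to the two halves $G_1,G_2$ obtained by cutting along $d$, together with the symmetry of $G$, one shows that $G$ can be transformed by symmetric moves into a graph exhibiting this bridge or symmetric pair in the predicted position. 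The same holds for $G'$. After removal, $G^{-}$ and $(G')^{-}$ are reduced symmetric plabic graphs with the common bounded affine permutation $f^{-}$, to which the inductive hypothesis applies; then adding the bridge(s) back to the equivalence $G^{-}\sim (G')^{-}$ yields the desired sequence of symmetric moves from $G$ to $G'$.

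The main technical obstacle is the canonical-bridge step: one must guarantee that the particular covering $f^{-}\lessdot f$ selected above is realizable as a boundary bridge in \emph{any} reduced symmetric plabic graph with bounded affine permutation $f$, not just in some convenient representative. In the type $A$ story this is the content of Postnikov's Lemma 13.5, and the type $C$ analog requires a careful case split between the ``symmetric pair'' covering and the ``midline bridge'' covering, together with the observation that no trip in a reduced symmetric plabic graph crosses $d$ more than once (used in the proof of Proposition \ref{bridge}). Once this refinement of Proposition \ref{bridge} is established, the induction described above produces the required sequence of symmetric moves.
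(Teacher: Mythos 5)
Your overall strategy --- peel off a canonically chosen bridge, induct, and reattach --- is genuinely different from the paper's argument, but as written it has a gap at exactly the step you yourself flag as ``the main technical obstacle,'' and that step is the entire content of the reverse direction. Proposition \ref{bridge} only guarantees that \emph{some} symmetric bridge configuration can be produced by symmetric moves, at a position discovered by examining the structure of the particular graph (whether the trips of $G_1$ starting on $d$ end on $B_1$ or on $d$, and so on); it does not assert that a position prescribed in advance by $f$ alone is realizable in \emph{every} reduced symmetric plabic graph with bounded affine permutation $f$. Without that uniformity you cannot conclude that $G$ and $G'$ can both be brought to display the same bridge, so the induction does not close; the sentence ``one shows that $G$ can be transformed by symmetric moves into a graph exhibiting this bridge or symmetric pair in the predicted position'' is precisely the statement that needs a proof, and it is not a routine consequence of Lemmas \ref{key} and \ref{crossing} applied to the two halves, because the hard case is when the only available bridges cross $d$ and the moves on $G_1$ must interact with the artificial boundary along the midline. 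Moreover, your canonical selection rule is not even well defined: it admits pairs with $b\in[n+1,a'-1]$, for which the chords $(a,b)$ and $(b',a')$ cross and hence cannot be a planar symmetric pair of bridges. Concretely, for $n=2$ take $f\in\Bd^C(4)$ with $f(1)=4$, $f(2)=6$, $f(3)=3$, $f(4)=5$; your lex-first rule selects $(a,b)=(1,3)$, whose mirror is $(2,4)$, a crossing pair, whereas the only realizable covering is the midline bridge at $(1,4)$. Finally, the induction runs in the wrong direction as stated: by Proposition \ref{canadd}, \emph{adding} a bridge shortens the bounded affine permutation, so deleting one gives $f\lessdot f_{G^{-}}$ with $\ell^{\widetilde{C}}(f_{G^{-}})=\ell^{\widetilde{C}}(f)+1$, and you must induct on the dimension of $\Pi^C_f$ (descending to the lollipop base case), not on its codimension.

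For comparison, the paper sidesteps the canonical-bridge problem entirely. It uses symmetric square moves along $d$ to bring both $G$ and $G'$ into a normal form in which the trips of $G_1$ originating on $d$ are pairwise aligned, observes that in this normal form the trip permutations of the two halves $G_1,G_2$ are determined by $\pi_G$ alone, and then applies Postnikov's move-equivalence theorem for ordinary reduced plabic graphs to each half separately, mirroring the resulting moves. If you want to salvage your induction, the missing ingredient is a symmetric analog of Lemmas \ref{key} and \ref{crossing} in which the bridge position depends only on $f$, and proving it would require essentially the same midline analysis that the paper's normal-form argument already carries out.
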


\begin{proof}

Each symmetric move is a composition of one or more of Postnikov's moves for plabic graphs.  Hence by Lemma 13.1 of \citep{Pos06}, the symmetric moves do not change the decorated permutation of $G$, and the forward direction is clear.   

For the reverse direction, note that since $G$ is reduced, $G$ is strongly reduced.  In particular, no trip in $G$ crosses the midline more than once.  Let $G_1$ and $G_2$ be defined as above, and let $r_1,\ldots,r_m$ be the vertices of $G_1$ which lie along $d$, numbered from bottom to top.  By the argument in the previous proof, the trips in $G_1$ originating on $d$ represent pairwise alignments or crossings.  Moreover, we claim that we may reduce to the case where these trips are pairwise aligned.  

Suppose the trips starting at $r_i$ and $r_{i+1}$ are the first pair which represent a crossing, so that the trips starting at $r_1,\ldots,r_i$ are pairwise aligned.  Assume that we have chosen $i$ maximal in the move-equivalence class of $G$.  Then we can transform $G_1$ into a graph with a white-black bridge at $(r_i,r_{i+1})$ and perform a square move.  As above, this uncrosses the trips originating at $r_i$ and $r_{i+1}$, while leaving the other trips which originate on $d$ unchanged.  Since $G$ is reduced, the result must be a graph $G^*$ with $G_1^*$ and $G_2^*$ reduced.  If $r_i$ and $r_{i-1}$ now represent a crossing, we iterate this process, and uncross $r_{i}$ and $r_{i-1}$.  Continuing in this fashion, we may transform $G^*$ into a new graph $\overline{G}$ where the trips originating at $r_1,\ldots,r_{i+1}$ are pairwise aligned. 
This is a contradiction, since $i$ was assumed to be maximal.  

Hence we may transform $G$ by symmetric moves into a configuration where the trips originating at $r_1,\ldots,r_m$ are aligned, and the symmetric statement is true for $G_2$.  Note that with this condition, the trip permutations $\pi_{G_1}$ and $\pi_{G_2}$ are uniquely determined by $\pi_G$.

Repeating the argument, we may transform $G'$ into the same form.  But now $G_1$ and $G_1'$ are reduced plabic graphs with the same trip permutation, and similarly for $G_2$ and $G_2'$.  Hence we can transform $G_1$ into $G_1'$ by a series of moves.  Performing the corresponding symmetric moves on $G$ yields $G'$, and the proof is complete.
\end{proof}

We have shown that two symmetric plabic graphs have the same bounded affine permutation, and hence are associated to the same cell in the Lagrangian Grassmannian, if and only if they are equivalent by symmetric local moves.  Hence the combinatorics of symmetric plabic graphs neatly parallels the combinatorics of ordinary plabic graphs, as desired.  

\subsection{Network parametrizations for projected Richardson varieties in $\Lm(2n)$.}

Let $G$ be a reduced symmetric plabic graph with edge set $E$, and let $\Pi_G^A$ denote the corresponding positroid variety in $\Gr(n,2n)$.  Let $\G^E$ denote the space of edge weightings of $G$, and let $\G^E_C$ be the space of symmetric weightings of $G$.  Let $\G^V$ denote the group of gauge transformations of $G$, and let $\G_C^V$ denote the group of symmetric gauge transformations; that is, gauge transformations which act by the same value on $v$ and $r(v)$ for each internal vertex $v$ of $G$.

\begin{lem}\label{torus}The image of $\G_C^E$ is closed in $\G^E/\G^V$.
\end{lem}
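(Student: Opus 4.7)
The plan is to prove this by exhibiting the preimage of the image as a closed subgroup of $\mathbb{G}^E$, using the fact that everything in sight is an algebraic torus. Observe that $\mathbb{G}^E = (\complexes^\times)^E$, $\mathbb{G}^V = (\complexes^\times)^V$, and the subset $\mathbb{G}_C^E$ of symmetric weightings (a subtorus cut out by the equations $w_e = w_{r(e)}$) are all complex algebraic tori, and that the gauge action of $\mathbb{G}^V$ on $\mathbb{G}^E$ factors through the group homomorphism $\rho:\mathbb{G}^V\to \mathbb{G}^E$ given by $\rho(\mu)(u,v)=\mu_u\mu_v$. In particular, $\mathbb{G}^E/\mathbb{G}^V$ coincides as a topological space with the torus quotient $\mathbb{G}^E/\rho(\mathbb{G}^V)$.

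First I would reduce to a statement inside $\mathbb{G}^E$: by the definition of the quotient topology, a subset of $\mathbb{G}^E/\mathbb{G}^V$ is closed if and only if its full preimage in $\mathbb{G}^E$ is closed. The preimage of the image of $\mathbb{G}_C^E$ is precisely the pointwise product $\rho(\mathbb{G}^V)\cdot\mathbb{G}_C^E$, so it suffices to show that this product is closed in $\mathbb{G}^E$.

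The key observation is that $\rho(\mathbb{G}^V)\cdot\mathbb{G}_C^E$ is actually a subgroup of $\mathbb{G}^E$, because $\mathbb{G}^E$ is abelian. Concretely, it is the image of the homomorphism
\[\Phi : \mathbb{G}^V \times \mathbb{G}_C^E \longrightarrow \mathbb{G}^E,\qquad \Phi(\mu,w) = \rho(\mu)\,w.\]
Since $\Phi$ is a morphism of linear algebraic groups (indeed of algebraic tori), a standard fact implies that its image is closed in $\mathbb{G}^E$ in both the Zariski and the Euclidean topology, and the claim follows.

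The only point that requires any thought is recognizing that the abelianness of $\mathbb{G}^E$ upgrades $\rho(\mathbb{G}^V)\cdot\mathbb{G}_C^E$ from a union of cosets to an honest subgroup; once this is in place the result is a direct application of the closed image theorem for morphisms of algebraic groups, and no serious obstacle remains.
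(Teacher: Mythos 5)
Your proof is correct, but it takes a genuinely different route from the paper's. You reduce the statement, via the definition of the quotient topology, to showing that the saturation $\rho(\mathbb{G}^V)\cdot\mathbb{G}_C^E$ is closed in $\mathbb{G}^E$, recognize this set as the image of the homomorphism of tori $\Phi(\mu,w)=\rho(\mu)w$, and invoke the closed image theorem for morphisms of linear algebraic groups (which gives Zariski closedness, hence Euclidean closedness). This is clean and abstract. The paper's argument is more hands-on and coordinate-driven: it constructs a spanning forest $F$ that is symmetric about the midline, gauge-fixes the weights of all edges of $F$ to $1$ to obtain a unique representative (an ``$F$-weighting'') in each gauge orbit, observes that symmetric weightings gauge-fix to symmetric $F$-weightings because $F$ is symmetric, and then concludes by showing the complement of the image is open, using the continuity of the gauge-fixing map. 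The paper's approach buys an explicit cross-section of the gauge action, which is reused implicitly later (it identifies $\mathbb{G}^E/\mathbb{G}^V$ with the torus $\mathbb{G}^{E\setminus F}$ and $\mathbb{G}_C^E/\mathbb{G}_C^V$ with its symmetric subtorus), whereas your approach establishes closedness with minimal machinery but provides no coordinates. Both proofs are valid; one small stylistic point worth making explicit in your write-up is that the quotient map $\mathbb{G}^E\to\mathbb{G}^E/\mathbb{G}^V$ is open (being a quotient by a group action), though your argument only needs the bare definition of the quotient topology, which you do state.
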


\begin{proof}
First, we choose a subset $F$ of the edges of $G$ which meets all of the following conditions:
\begin{enumerate}
\item $F$ is the disjoint union of a collection of trees in $G$, each of which contains exactly one boundary leg of $G$.
\item $F$ covers each vertex of $G$ exactly once.
\item $F$ is symmetric about the midline $d$ of $G$; that is, an edge $e$ of $G$ is contained in $F$ if and only if the same is true for $r(e)$.
\end{enumerate}
It is not hard to show that such a subset exists, since $G$ is symmetric and reduced.

Working inward from the boundary, we may then successively gauge-fix each remaining edge in $F$ to $1$.  We call the resulting weighting of $G$ an $F$-weighting.  Each point in $\G^E/\G^V$ may be represented by a unique $F$-weighting, and the weights of edges in $E - F$ give coordinates on a dense subset of $\Pi_G^A$.

Consider a weighting $w$ of $G$ which lies in $\G_C^E$.  Since the forest $F$ is symmetric about the midline, the weighting $w$ may be transformed into an $F$-weighting by a series of symmetric gauge transformations, which preserve $\G_C^E$.  Every gauge-equivalence class contains a unique $F$-weighting, so an equivalence class $\overline{w} \in \G^E/\G^V$ is in the image of $\G_C^E$ if and only if the corresponding $F$-weighting is symmetric.  Moreover, the map 
\[\omega_F:\G^E \rightarrow \G^{E\backslash F}\]
which carries each weighting $w$ to the corresponding $F$-weighting is continuous.

Now, let $w$ be a weighting of $G$ which is \emph{not} contained in the image of the space $\G_C^E$ of symmetric weightings, so that the $F$-weighting $w'$ corresponding to $\overline{w}$ is not symmetric.  Then there is a neighborhood of the point $\omega_F(w')$ in $\G^{E\backslash F}$  which corresponds to $F$-weightings which likewise are not symmetric; taking the preimage in $\G^E$, we have an open subset of $G^E$ containing the preimage of $\overline{w}$, which does not intersect the image of $\G_C^E$.  This completes the proof.
\end{proof}

We have shown that the image of $\G_C^E$ is closed in $\G^E/\G^V$.  Moreover, two elements of $\G_C^E$ map to the same point in $\G^E/\G^V$ if and only if they are related by a symmetric gauge transformation.  Let $\G_C^V$ be the group of symmetric gauge transformations.  Then we obtain a Lagrangian boundary measurement map 
\[\mathbb{D}_G^C:\G_C^E/\G_C^V \rightarrow \Pi_G^C\]
simply by composing the boundary measurement map with the obvious embedding of $\G_C^E/\G_C^V\hookrightarrow \G^E/\G^V.$

\begin{thm}
The Lagrangian boundary measurement map $\mathbb{D}^C$ takes $\G_C^E/\G_C^V$ birationally to a dense subset of $\Pi_G^C$.  
\end{thm}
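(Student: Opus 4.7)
The plan is to reduce to the case of a symmetric bridge graph, where the Lagrangian boundary measurement map has already been identified with an MR parametrization in Section \ref{Cbridge}. By the previous proposition, two reduced symmetric plabic graphs with the same bounded affine permutation are related by a sequence of symmetric local moves; and by the construction in Section \ref{Cbridge}, every $f \in \Bd^C(2n)$ is the bounded affine permutation of some symmetric bridge graph. So choose a symmetric bridge graph $B$ with $f_B = f_G$, together with a sequence of symmetric moves carrying $G$ to $B$.

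Next I would show that each symmetric local move induces a birational change of variables on the corresponding symmetric gauge quotients, under which the two Lagrangian boundary measurement maps are intertwined. Each symmetric move is by construction a composition of Postnikov's local moves performed in a symmetric fashion, and the square-move formulas displayed in Figure \ref{squarecoords} give the birational change of variables on $\G^E/\G^V$. The point to check is that when this is applied to either a symmetric pair of faces off the midline (where we apply the same substitution to both faces) or to a single face bisected by the midline $d$ (where the weights satisfy $a = r(c)$, $b = r(d)$ and the formulas collapse accordingly), the resulting substitution preserves the subspace of symmetric weightings and descends to a birational map between $\G_C^{E_G}/\G_C^{V_G}$ and $\G_C^{E_B}/\G_C^{V_B}$. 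The other symmetric moves (symmetric degree-two vertex removal, the new move adding a pair of degree-two vertices on an edge crossing $d$, and symmetric gauge transformations) are straightforward. Consequently $\mathbb{D}_G^C$ is birational onto a dense subset of $\Pi_G^C$ if and only if the same is true for $\mathbb{D}_B^C$.

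It remains to verify the statement for a symmetric bridge graph $B$. Take the symmetric forest $F$ consisting of all non-bridge edges, gauge-fix everything in $F$ to $1$ (a symmetric choice, as in Lemma \ref{torus}), and retain the bridge weights $t_1, \ldots, t_d$ as parameters, identifying the weights on each symmetric pair of bridges. By Lemma \ref{deoAC} and the parametrization result in Section \ref{Cbridge}, the resulting map from $(\complexes^\times)^d$ into $\Gr(n,2n)$ factors through $\Lm(2n)$ and coincides with the MR parametrization of the top-dimensional Deodhar component $\mathcal{D}^C_{\widetilde{\mf{u}},\widetilde{\mf{w}}}$ of $\mathring{\Pi}^C_{u,w}$ associated to the PDS $\widetilde{\mf{u}} \preceq \widetilde{\mf{w}}$ encoded by the bridge sequence of $B$. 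By the general theory of Marsh and Rietsch, this MR parametrization is an isomorphism onto $\mathcal{D}^C_{\widetilde{\mf{u}},\widetilde{\mf{w}}}$, which is open and dense in $\mathring{\Pi}^C_{u,w}$ and hence in $\Pi^C_{u,w} = \Pi^C_B$.

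The main obstacle is the second step: carefully checking that the birational substitutions coming from symmetric local moves respect the symmetric structure on weightings and on gauge groups, so that the comparison between $\mathbb{D}_G^C$ and $\mathbb{D}_B^C$ is genuinely a birational map of the symmetric gauge quotients rather than only of the ambient quotients $\G^E/\G^V$. Once this is in hand, combining the three steps gives that $\mathbb{D}_G^C$ is birational onto a dense subset of $\Pi_G^C$, as required.
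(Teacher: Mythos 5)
Your argument is correct in outline but takes a genuinely different route from the paper's. You reduce to a symmetric bridge graph by move-equivalence and then transport birationality across the symmetric local moves; the paper instead works directly with the given graph $G$: it invokes the Muller--Speyer theorem that the ambient map $\mathbb{D}_G:\G^E/\G^V\to\Pi_G^A$ is birational and regular with inverse defined on its open image, observes via Lemma \ref{torus} that $\G_C^E/\G_C^V$ sits as a closed subtorus of $\G^E/\G^V$, and then shows $\mathbb{D}_G^C(\G_C^E/\G_C^V)=\mathbb{D}_G(\G^E/\G^V)\cap\Pi_G^C$ by a dimension count --- and it is only this dimension count that is checked on a bridge graph, using the identification with MR parametrizations. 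The trade-off is real: the paper's route sidesteps entirely the verification you flag as your ``main obstacle,'' namely that each symmetric move induces a birational map of the \emph{symmetric} gauge quotients intertwining the two Lagrangian boundary measurement maps, and it yields the sharper conclusion that the image is exactly the intersection of the Muller--Speyer image with $\Pi_G^C$ (in particular open, not merely dense). Your route is more self-contained on the type-$C$ side, but the flagged step is genuinely the crux and is not routine: the weight-transformation formulas (e.g.\ Figure \ref{squarecoords}) are stated only after gauge-fixing the unlabeled edges to $1$, so for the square move at a face bisected by $d$ you must check that this gauge-fixing can be done by \emph{symmetric} gauge transformations and that the resulting substitution respects the one linear relation the symmetry imposes on the face weights, and likewise that the equality $\dim(\G_C^{E_G}/\G_C^{V_G})=\dim(\G_C^{E_B}/\G_C^{V_B})$ holds across moves. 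These checks are doable (the paper asserts the key fact that symmetric moves carry symmetric weightings to symmetric weightings, and Lemma \ref{torus} supplies the symmetric spanning forest needed for symmetric gauge-fixing), but as written your proof defers exactly the part that carries the content; I would encourage you to either carry it out or adopt the paper's shortcut through the type-$A$ birationality theorem.
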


\begin{proof}
By the main theorem of \citep{MS14}, the boundary measurement map 
\[\mathbb{D}_G:\G^E/\G^V \rightarrow \Pi_G^A\]
is a birational map, which is regular on its domain of definition.  Moreover, $\mathbb{D}_G^{-1}$ is defined precisely on the image of $\mathbb{D}_G$, which is open in $\Pi_G^A$. 
It follows that $\mathbb{D}_G(\G^E/\G^V) \cap \Pi_G^C$ is an open, nonempty subset of the irreducible algebraic set $\Pi_G^C$ and is therefore dense in $\Pi_G^C$.  Now, the torus $\G_C^E/\G_C^V$ embeds as a closed subset of 
$\G^E/\G^V$.  
Hence, the image $\D^C_G(\G_C^E/\G_C^V)$ is a locally closed subset of $\mathbb{D}_G(\G^E/\G^V) \cap \Pi_G^C$.  

We show that $\mathbb{D}_G^C(\G_C^E/\G_C^V) = \mathbb{D}_G(\G^E/\G^V) \cap \Pi_G^C$ and hence $\mathbb{D}_G^C(\G_C^E/\G_C^V)$ is open and dense in $\Pi_G^C$.  For this, in turn, it suffices to show
\begin{equation} \label{fulldim} \dim \left(\G_C^E/\G_C^V \right)= \dim \Pi_G^C \end{equation}
since a full-dimensional closed subset of the irreducible algebraic set $\mathbb{D}_G(\G^E/\G^V) \cap \Pi_G^C$ must be the entire set.  

To check \eqref{fulldim}, it is enough to consider the case of a bridge graph.  The image of each symmetric bridge graph is indeed full-dimensional in $\Pi_G^C$, since bridge graphs encode MR parameterizations.  This completes the proof.
\end{proof}

Hence symmetric plabic graphs parametrize projected Richardson varieties in $\Lm(2n)$, just as ordinary plabic graphs parametrize positroid varieties in $\Gr(n,2n)$.  From this result, we obtain a collection of relations which cut out $\Lm(2n)$ in $\Gr(n,2n)$.  

\begin{thm}
Let $P \in \Gr(n,2n)$.  Then $P$ lies in $\Lm(2n)$ if and only if, for each $I \in {{[2n]}\choose{n}}$, we have
\[\Delta_I(P) =\Delta_{R(I)}(P).\]
\end{thm}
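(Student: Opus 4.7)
The plan is to prove the set-theoretic equality in both directions. For $\Lm(2n) \subseteq L$ (where $L$ denotes the common zero locus of the proposed linear relations), I will appeal to the parametrization result just established: the top-dimensional open projected Richardson variety $\mathring{\Pi}^C_{1,w_0^C}$ is dense in $\Lm(2n)$ by irreducibility, and is (up to a further dense subset) the image of a symmetric plabic graph under the Lagrangian boundary measurement map. By the earlier Proposition that symmetric weightings give points in $\Lm(2n)$, whose proof established precisely that such images satisfy $\Delta_J(P)=\Delta_{R(J)}(P)$ as the key intermediate step, this image lies in $L$. Since $L$ is closed in $\Gr(n,2n)$, it contains the closure $\Lm(2n)$.

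For the reverse inclusion $L \subseteq \Lm(2n)$, let $P \in L$ and let $I_0$ be the lexicographically first nonzero Pl\"ucker coordinate of $P$. I will apply Lemma \ref{Lagrangian}. The hard part is verifying condition (1), that $R(I_0)=I_0$: once this holds, condition (2) is immediate, because when $R(I_0)=I_0$ a direct set-theoretic computation yields $R((I_0\setminus i_j)\cup i_k')=(I_0\setminus i_k)\cup i_j'$, and then the hypothesis $\Delta_J(P)=\Delta_{R(J)}(P)$ applied to $J=(I_0\setminus i_j)\cup i_k'$ is exactly the relation required.

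Suppose for contradiction that $R(I_0)\neq I_0$. Since $\Delta_{R(I_0)}(P)=\Delta_{I_0}(P)\neq 0$, lex-firstness forces $R(I_0)>_{\mathrm{lex}} I_0$, so $I_0$ contains at least one ``doubled pair'' $\{c,c'\}$. Let $a$ be the smallest such doubled element of $I_0$, and let $b_0$ be the smallest ``empty'' element (so $b_0,b_0' \notin I_0$). I first rule out $b_0<a$: in that case every $c\in[1,b_0-1]$ is a singleton of $I_0$, so $I_0$ and $R(I_0)$ both begin with $1,2,\dots,b_0-1$, but at position $b_0$ the set $R(I_0)$ contains $b_0$ itself while $I_0$'s next element is $\geq b_0+1$ (since $b_0\notin I_0$), giving $R(I_0)<_{\mathrm{lex}}I_0$ and contradicting lex-firstness. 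Hence $b_0>a$, and it follows that every empty element $c\in[2n]$ satisfies $a<c,c'<a'$.

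Now invoke the Pl\"ucker exchange identity for $I_0,R(I_0)$ with the element $a\in I_0\setminus R(I_0)$:
\[
\Delta_{I_0}\,\Delta_{R(I_0)} \;=\; \sum_{b\in R(I_0)\setminus I_0} \pm\,\Delta_{I_0-a+b}\,\Delta_{R(I_0)-b+a}.
\]
The summation indices $b$ are the empty elements of $I_0$. For each summand, a short computation gives $R(R(I_0)-b+a)=(I_0\setminus a')\cup b'$; since $b'\notin I_0$ and $a<b'<a'$ by the previous paragraph, this set is obtained from $I_0$ by replacing the element $a'$ with the strictly smaller element $b'$ slotted inside $(a,a')$, and a position-by-position check shows $(I_0\setminus a')\cup b' <_{\mathrm{lex}} I_0$. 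Lex-firstness then gives $\Delta_{(I_0\setminus a')\cup b'}(P)=0$, and the hypothesis $\Delta_J=\Delta_{R(J)}$ transfers this vanishing to $\Delta_{R(I_0)-b+a}(P)=0$. Every summand is zero, forcing $\Delta_{I_0}(P)^2=0$, the desired contradiction. The main obstacle is ensuring the argument works when $I_0$ has several doubled pairs, which is why the proof uses the full quadratic exchange identity rather than a single three-term Pl\"ucker relation (the latter suffices only when there is a unique doubled pair, so that $S\cup\{b_0,b_0'\}$ happens to equal $R(I_0)$), together with the ``smallest doubled vs.\ smallest empty'' comparison to control the slot position of $b'$.
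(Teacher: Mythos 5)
Your proof is correct, and the forward inclusion is essentially the paper's argument (the paper applies the density of the Lagrangian boundary measurement image on each projected Richardson stratum separately, while you invoke only the top cell and the closedness of the relation locus $L$; both work). The reverse inclusion is where you genuinely diverge. Both proofs reduce to showing the lex-first basis $J = I_0$ satisfies $R(J) = J$, and both rule out the case where the smallest anomalous pair is empty by the same lex-order comparison; but to handle a doubled pair $\{a,a'\} \subseteq J$, the paper uses a direct linear-algebra argument: in a reduced row-echelon representative with pivot set $J$, every column indexed by $R(J)$ lies in the span of the pivot columns $J \setminus \{a'\}$, so $\Delta_{R(J)} = 0$ by a rank count, with no further appeal to the hypothesis $\Delta_I = \Delta_{R(I)}$. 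You instead expand $\Delta_{I_0}\Delta_{R(I_0)}$ by the one-column Pl\"ucker exchange on $a$ and kill every summand: $R(R(I_0) - b + a) = (I_0 \setminus a') \cup b'$ is lex-smaller than $I_0$ because $a < b' < a'$, so the corresponding minor vanishes by lex-firstness, and the hypothesis transfers this vanishing to $\Delta_{R(I_0)-b+a}$. This buys a uniform treatment of all terms, at the cost of re-using the hypothesis on many minors and invoking the Pl\"ucker relation, whereas the paper's argument is self-contained linear algebra once the lex comparison pins down $a, a' \in J$. One small slip in wording: in ruling out $b_0 < a$ you write that $I_0$ and $R(I_0)$ ``both begin with $1, 2, \dots, b_0 - 1$''; what is true (and what you need) is that they \emph{agree} on $[1, b_0 - 1]$, since each $c < b_0$ is a singleton and hence lies in both sets or in neither; the lex comparison at $b_0$ then goes through exactly as you state.
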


\begin{proof}
We first show that the desired relations hold on all of $\Lm(2n)$.  It is enough to show that the desired relations hold on each projected Richardson variety $\Pi^C$ of $\Lm(2n)$, since projected Richardson varieties form a stratification of $\Lm(2n)$.  Clearly, these relations are satisfied by any point corresponding to a symmetric plabic graph with a symmetric weighting.  Let $G$ be a symmetric plabic graph corresponding to $\Pi^C$.  Then the Lagrangian boundary measurement map $\mathbb{D}^C$ takes the space of Lagrangian weightings of $G$ to a dense subset of $\Pi^C$.  Since $\Pi^C$ is an irreducible algebraic set, the relations hold on all of $\Pi^C$, and we are done.

Conversely, let $P \in \Gr(n,2n)$ be a point which satisfies
\[\Delta_I(P) = \Delta_{R(I)}(P)\]
for all $I \in {{[2n]}\choose{n}}$.  Let $J$ be the lex-minimal basis of $P$.  
We claim that $J$ contains exactly one element of each pair $\{a,a'\}$ with $a \in [2n]$.  
Suppose the claim holds.  Then $P$ satisfies all of the relations in the statement of Lemma \ref{Lagrangian}, and so $P \in \Lm(2n)$ and we are done.  It remains to check the claim.  

Suppose the claim fails.  Let $a$ be the smallest element of $[2n]$ such that either $\{a,a'\} \subseteq J$ or $\{a,a'\} \subseteq [2n] \backslash J$.  
Now $\Delta_{R(J)}(P) = \Delta_{J}(P)$, so $J \leq R(J)$ in the lex order on ${{[2n]}\choose{n}}$.  Note that if $a > 1$, then we have 
\[J \cap ([1,a-1] \cup [a'+1,2n]) = R(J) \cap ([1,a-1] \cup [a'+1,2n]).\]
If $a,a' \not\in J$, then $a,a' \in R(J)$, so $R(J) < J$ in lex order, a contradiction.  Thus, we have $a,a' \in J$.  But this, in turn, forces $a,a' \not\in R(J)$.  

Let $M$ be a matrix representative for $P$.  Then the minor of $M$ indexed by $R(J)$ includes precisely the pivot columns of $M$ indexed by $[1,a-1] \cup [a'+1,2n]$, together with some subset of the columns $[a+1,a'-1]$.  Hence, the span of these columns is contained in the span of $J\backslash \{a'\}$, and the corresponding minor vanishes, a contradiction.  It follows that $J$ contains exactly one of each pair $\{a,a'\}$, and the proof is complete.
\end{proof}

\section{Total nonnegativity for $\Lm(2n)$}

\label{positivity}

In the Grassmannian case, positivity of Pl\"{u}cker coordinates agrees with Lusztig's notion of total nonnegativity for partial flag manifolds.  Moreover, plabic graphs with positive real edge weights parametrize totally nonnegative cells in $\Gr(k,n)$.  We now prove analogous statements for $\Lm(2n)$.

\begin{prop}
Let $\mathring{\Pi}^C$ be a projected Richardson variety in $\Lm(2n)$.  Then set theoretically, 
\[\mathring{\Pi}^C_{\geq 0}  = \mathring{\Pi}^C \cap \Gr_{\geq 0}(k,n).\]
\end{prop}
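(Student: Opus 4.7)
The inclusion $\mathring{\Pi}^C_{\geq 0} \subseteq \mathring{\Pi}^C \cap \Gr_{\geq 0}(n,2n)$ follows immediately from the results of Section \ref{Cbridge}: by definition $\mathring{\Pi}^C_{\geq 0}$ is the image of the top-dimensional MR parametrization with positive real parameters, which agrees with the parametrization arising from a symmetric bridge graph $G$ with symmetric positive real edge weights. Forgetting the symmetry, such a weighting is simply a positive real edge weighting of the ordinary bridge graph $G$ for $\mathring{\Pi}^A$, whose image under the boundary measurement map lies in $\mathring{\Pi}^A_{\geq 0} \subseteq \Gr_{\geq 0}(n,2n)$. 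Since the image obviously lies in $\mathring{\Pi}^C$, this gives the forward inclusion.

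For the reverse inclusion, fix $P \in \mathring{\Pi}^C \cap \Gr_{\geq 0}(n,2n)$. By Proposition \ref{nicepos}, $P$ lies in the corresponding open positroid variety $\mathring{\Pi}^A$, hence in $\mathring{\Pi}^A_{\geq 0}$. I would then choose a symmetric bridge graph $G$ for $\mathring{\Pi}^C$ with bridges $B_1,\ldots,B_d$; reflection through the midline of $G$ permutes the bridges, inducing an involution $i \mapsto r(i)$ on $\{1,\ldots,d\}$, and a weighting is symmetric precisely when $t_i = t_{r(i)}$ for all $i$. By Postnikov, the bridge graph parametrization of $\mathring{\Pi}^A$ (non-bridge edges fixed to $1$) is a homeomorphism $(\reals^+)^d \to \mathring{\Pi}^A_{\geq 0}$, so $P$ corresponds to a unique positive real parameter vector $(t_1,\ldots,t_d)$.

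The key step is to show $t_i = t_{r(i)}$ for all $i$. Let $\mu$ be the weighting $\mu(B_i) = t_i$ and let $r\mu$ be the reflected weighting, $(r\mu)(B_i) = t_{r(i)}$. I would first establish the matching identity $\partial_G(rM) = R(\partial_G(M))$ for every almost perfect matching $M$ of $G$; this follows from the fact that midline reflection interchanges the colors of paired boundary vertices together with the definitions of $\partial$ and of $R$. Summing over matchings then gives
\[\Delta_I(\partial_G(r\mu)) = \Delta_{R(I)}(\partial_G(\mu)) = \Delta_{R(I)}(P)\]
for every $I \in \binom{[2n]}{n}$. Since $P \in \Lm(2n)$ satisfies $\Delta_I(P) = \Delta_{R(I)}(P)$, this forces $\partial_G(r\mu) = P = \partial_G(\mu)$; bijectivity of the bridge graph parametrization on positive parameters then yields $t_i = t_{r(i)}$, so $\mu$ is symmetric and $P \in \mathring{\Pi}^C_{\geq 0}$.

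The main obstacle is verifying the matching identity $\partial_G(rM) = R(\partial_G(M))$ cleanly, as this requires careful bookkeeping of how midline reflection interacts with the bicoloring of boundary vertices used in the definition of $\partial$. Once this is in hand, the rest of the argument reduces the Lagrangian Pl\"ucker condition to the fixed-point condition for the reflection involution on the parameter torus, and concludes by invoking uniqueness of positive parameters for a bridge graph. One could equally well run the argument for a general symmetric plabic graph, working in the quotient $\G_C^E/\G_C^V$ developed in Section \ref{Cbdry}, but the bridge graph version is more direct because the parametrization is literally a bijection on a product of positive real lines.
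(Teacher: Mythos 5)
Your proof is correct, but it takes a genuinely different route from the paper's. The paper argues entirely at the level of the flag variety: using Lemma \ref{deoAC}, it identifies the top type-$C$ Deodhar component $\mathcal{R}^C_{\widetilde{\mf{u}},\widetilde{\mf{w}}}$ with the locus in the top type-$A$ component $\mathcal{R}^A_{\mf{u},\mf{w}}$ cut out by the equalities $t_i = t_{i+1}$ coming from folding, then observes that the positive-real locus of the former is therefore exactly $\mathring{R}^C_{u,w}$ intersected with the positive-real locus of the latter, and finally projects by $\pi_n$. That argument is short, purely Lie-theoretic, and uses only the MR machinery from Section \ref{back}. You instead work downstairs with the boundary measurement map on a symmetric bridge graph $G$. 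Your forward inclusion is in the same spirit as the paper (MR = symmetric-bridge on positive reals, forget the symmetry), but your reverse inclusion introduces a new mechanism: the matching-reflection identity $\partial(rM) = R(\partial(M))$ (which is correct, as one checks directly from the definition of $\partial$ and the fact that $r$ sends boundary vertex $a$ to $a'$ while flipping colors, giving $a \in \partial(rM) \Leftrightarrow a' \notin \partial(M) \Leftrightarrow a \in R(\partial(M))$), combined with the Lagrangian Pl\"ucker relations $\Delta_I = \Delta_{R(I)}$ from Section \ref{Cbdry} and the injectivity of Postnikov's bridge-graph parametrization of $(\mathring{\Pi}^A)_{\geq 0}$, to force the recovered bridge parameters to be $r$-symmetric.

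The trade-off: your argument leans on more of the combinatorial apparatus (the full boundary measurement formalism, the fact that $r$ permutes bridges and fixes the non-bridge edges of a symmetric bridge graph, the Pl\"ucker characterization of $\Lm(2n)$), whereas the paper's stays inside the Deodhar/MR framework. What your route buys is a very concrete explanation of \emph{why} a totally nonnegative Lagrangian point has symmetric parameters: the symmetry $\Delta_I = \Delta_{R(I)}$ of the point is literally equivalent, via the matching identity, to the invariance of the parameter vector under the involution $r$, and injectivity of the positive parametrization closes the loop. One small bookkeeping note: you write $\partial_G(rM)$ where you mean the boundary set $\partial(rM)$ of a matching rather than the boundary measurement of a weighting; the intent is clear and the identity is right. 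You also implicitly invoke (as does the paper, at the end of Section 2.4.3) the type-$A$ fact that $(\mathring{\Pi}^A)_{\geq 0} = \mathring{\Pi}^A \cap \Gr_{\geq 0}(n,2n)$ when you pass from $P \in \mathring{\Pi}^A \cap \Gr_{\geq 0}$ to $P \in (\mathring{\Pi}^A)_{\geq 0}$; this is fine but worth flagging as a cited ingredient rather than something to re-derive.
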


\begin{proof}
Let $\langle u,w \rangle_n^C \in \mathcal{Q}^C(2n)$ be the equivalence class corresponding to $\mathring{\Pi}^C$, and consider the corresponding class $\langle u,w \rangle_n \in \mathcal{Q}(n,2n)$.  
Let $\widetilde{\mf{w}}$ be a reduced word for $w$ in $S_n^C$, and let $\widetilde{\mf{u}} \preceq \widetilde{\mf{w}}$ denote the unique PDS for $u$ in $w$.  Let $\mf{u}$ and $\mf{w}$ denote the images of $\widetilde{\mf{u}}$ and $\widetilde{\mf{w}}$, respectively, under the embedding $S_n^C \hookrightarrow S_{2n}$; this is uniquely determined up to commutation moves.  The totally nonnegative part of $\mathring{\Pi}^C$ is the image in $\Lm(2n)$ of the subset of $\mathcal{R}_{\widetilde{\mf{u}},\widetilde{\mf{w}}}^C$ where all parameters take nonnegative real values.

Embed $\Sp(2n)/B_+^{\sigma}$ in $\mathcal{F}\ell(n)$ as before.  It follows from the proof of Lemma \ref{deoAC} that each Deodhar component of $\mathring{R}_{u,w}^C$ is the subset of the corresponding Deodhar component of $\mathring{R}_{u,w}^A$ where the parameters satisfy a number of conditions of the form $t_i = t_{i+1}$.  In particular, the totally nonnegative part of $\mathring{R}_{u,w}^C$ is the locally closed subset of $\mathcal{R}_{\mf{u},\mf{w}}$ cut out by these equalities, and is hence the intersection of $\mathring{R}_{u,w}^C$ with the totally nonnegative part of $\mathring{R}_{u,w}^A$.  Projecting to $\Lm(2n) \subseteq \Gr(n,2n)$ gives the desired result.
\end{proof}

Since open projected Richardson varieties form a stratification of $\Lm(2n)$, it follows that set-theoretically we have
\[\Lm_{\geq 0} (2n) = \Lm(2n) \cap \Gr_{\geq 0}(n,2n)\]
with our given choice of embedding and of symplectic form.  Hence, the totally nonnegative part of $\Lm(2n)$ is precisely the \emph{symmetric part} of $\Gr_{\geq 0}(k,n)$ studied in \citep{KS15}.

Let $G$ be a symmetric plabic graph, with corresponding projected Richardson variety $\Pi_G^C$.  Consider the space $(\G_C^E)_{\geq 0}$ of Lagrangian weightings of $G$ such that all edges of $G$ have positive real weights.  Let $(\G_C^V)_{\geq 0}$ denote the group of symmetric, positive real gauge transformations of $G$.  Then we have 
\[(\G_C^E)_{\geq 0}/(\G_C^V)_{\geq 0} \hookrightarrow \G_C^E/\G_C^V.\]

\begin{thm}For $G$ a symmetric plabic graph, restricting the map $\mathbb{D}^C$ to $(\G_C^E)_{\geq 0}/(\G_C^V)_{\geq 0}$ gives an isomorphism of real semi-algebraic sets 
\[(\G_C^E)_{\geq 0}/(\G_C^V)_{\geq 0} \cong (\Pi_G^C)_{\geq 0}.\]
\end{thm}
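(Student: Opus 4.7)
The plan is to reduce the statement to three ingredients: Postnikov's theorem that positive real edge weightings modulo positive gauge give an isomorphism with the totally nonnegative positroid cell in type $A$, the preceding proposition identifying $\mathring{\Pi}^C_{\geq 0}$ with $\mathring{\Pi}^C \cap \Gr_{\geq 0}(n,2n)$, and the move-equivalence theorem for symmetric plabic graphs together with the fact that symmetric bridge graphs encode MR parametrizations of the totally nonnegative cell.

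First I would handle injectivity and containment of the image. The restriction of $\mathbb{D}^C$ to $(\G_C^E)_{\geq 0}/(\G_C^V)_{\geq 0}$ is the composition of the embedding $(\G_C^E)_{\geq 0}/(\G_C^V)_{\geq 0} \hookrightarrow (\G^E)_{>0}/(\G^V)_{>0}$ with the type $A$ positive boundary measurement map into $(\Pi_G^A)_{\geq 0}$. Injectivity is therefore inherited from the type $A$ case. For the image, any point of the form $\mathbb{D}^C(\mu)$ with $\mu$ a symmetric positive weighting lies in $\Pi_G^A \cap \Lm(2n)$ since the symmetry of $\mu$ forces the Pl\"ucker relations $\Delta_I = \Delta_{R(I)}$ proved earlier, and it has all Pl\"ucker coordinates nonnegative by Postnikov's result in type $A$. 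By the preceding proposition we have
\[\Pi_G^A \cap \Lm(2n) \cap \Gr_{\geq 0}(n,2n) \subseteq (\Pi_G^C)_{\geq 0},\]
so the image is contained in $(\Pi_G^C)_{\geq 0}$ as required.

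The heart of the argument is surjectivity, and here I would exploit the combinatorial machinery developed in Sections \ref{Cbridge} and \ref{Cbdry}. Fix the bounded affine permutation $f_G \in \Bd^C(2n)$, and let $B$ be a symmetric bridge graph for $f_G$. By the results of Section \ref{Cbridge}, symmetric bridge graphs with symmetric weights encode MR parametrizations, and restricting the bridge parameters to $\mathbb{R}^+$ yields a parametrization of the totally nonnegative part of $\mathring{\Pi}_{f_G}^C$. Hence $\mathbb{D}^C_B$ is surjective onto $(\Pi_B^C)_{\geq 0}=(\Pi_G^C)_{\geq 0}$ when restricted to positive real symmetric weights. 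Since $G$ and $B$ have the same bounded affine permutation, by the move-equivalence theorem just established they are related by a sequence of symmetric local moves. Each such move is a composition of Postnikov's moves, and the associated change of variables on edge weights is subtraction-free (being built from the square-move formulas $a' = a/(ac+bd)$, etc.) and restricts to a well-defined rational isomorphism between the positive symmetric tori of the two graphs. Transporting the surjective parametrization from $B$ to $G$ through this chain of positive rational isomorphisms then gives surjectivity of $\mathbb{D}^C_G$ on positive symmetric weights.

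Combining injectivity, surjectivity, and the fact that $\mathbb{D}^C_G$ is manifestly a rational map defined by subtraction-free polynomials (the matching sum formula) with a subtraction-free inverse on each fiber (read off from the bridge graph parametrization and the square-move formulas), one obtains an isomorphism of real semi-algebraic sets. The main obstacle I anticipate is the bookkeeping required to certify that each symmetric local move induces a well-defined automorphism of $(\G_C^E)_{\geq 0}/(\G_C^V)_{\geq 0}$ — in particular that the additional symmetric square move across the distinguished diameter and the symmetric parallel edge reduction, which have no direct analog in Postnikov's list, are themselves compositions of subtraction-free transformations preserving symmetry. This should follow from decomposing each symmetric move into its two mirror-image constituent moves (possibly after inserting symmetric degree-two vertices) and appealing directly to Postnikov's formulas on each side of the diameter.
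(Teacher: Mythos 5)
Your proposal takes essentially the same route as the paper: reduce to the case of a symmetric bridge graph by appealing to the move-equivalence theorem and the fact that symmetric local moves act by subtraction-free changes of variables on positive real symmetric weightings, then invoke the fact that symmetric bridge graphs with positive symmetric weights encode MR parametrizations of the totally nonnegative cell. The paper's proof compresses this into a few sentences, while you usefully separate injectivity (inherited from the type $A$ positive boundary measurement map together with the symmetric-$F$-weighting argument underlying Lemma \ref{torus}), image containment (via the Pl\"{u}cker relations $\Delta_I = \Delta_{R(I)}$ and the identification $\mathring{\Pi}^C_{\geq 0} = \mathring{\Pi}^C \cap \Gr_{\geq 0}(n,2n)$), and surjectivity (via bridge graphs); this extra granularity is faithful to the argument and a reasonable expansion of what the paper leaves implicit, but it is not a different approach.
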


\begin{proof}
As long as we restrict internal edge weights to positive real numbers, all symmetric local moves induce isomorphisms--not simply birational maps--between the spaces of symmetric edge weightings of symmetric plabic graphs.  Hence, it is enough to prove the claim for a single choice of $G$.  For this, we simply choose a symmetric bridge graph.  Symmetric bridge graphs with Lagrangian weightings encode MR parametrizations, so the claim follows easily in this case, and the proof is complete.
\end{proof}

\begin{cor}The totally nonnegative cells of $\Lm(2n)$ are precisely the nonempty matroid cells of $\Lm_{\geq 0}(2n)$.
\end{cor}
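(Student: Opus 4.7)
My plan is to exploit the preceding proposition, which identifies each totally nonnegative cell $(\mathring{\Pi}^C_{u,w})_{\geq 0}$ of $\Lm(2n)$ with $\mathring{\Pi}^A_{u,w} \cap \Lm_{\geq 0}(2n)$, where $\mathring{\Pi}^A_{u,w}$ is the open positroid variety in $\Gr(n,2n)$ corresponding to $\langle u,w\rangle_n$ under the embedding $\mathcal{Q}^C(2n) \hookrightarrow \mathcal{Q}(n,2n)$. Combined with Postnikov's type $A$ theorem that the nonempty matroid cells of $\Gr_{\geq 0}(n,2n)$ are precisely its positroid cells, this should reduce the corollary to a bookkeeping argument, since the matroid of a point in $\Lm_{\geq 0}(2n)$ depends only on which Pl\"ucker coordinates vanish and therefore coincides with its matroid as a point of $\Gr_{\geq 0}(n,2n)$.

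For the forward inclusion I would first argue that the matroid is constant on each totally nonnegative cell $(\mathring{\Pi}^C_{u,w})_{\geq 0}$. This follows from
\[(\mathring{\Pi}^C_{u,w})_{\geq 0} \subseteq (\mathring{\Pi}^A_{u,w})_{\geq 0},\]
because every point of the positroid cell on the right has matroid equal to the fixed positroid $\mathcal{J}_{u,w}$ attached to $\mathring{\Pi}^A_{u,w}$. Consequently each totally nonnegative cell of $\Lm(2n)$ lies inside a single nonempty matroid cell $\mathcal{M}_{\mathcal{J}_{u,w}} \cap \Lm_{\geq 0}(2n)$.

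For the reverse inclusion I would take a nonempty matroid cell $\mathcal{M}_{\mathcal{J}} \cap \Lm_{\geq 0}(2n)$ and pick any point $P$ in it. Since the totally nonnegative cells stratify $\Lm_{\geq 0}(2n)$, the point $P$ lies in some $(\mathring{\Pi}^C_{u,w})_{\geq 0}$, which forces $\mathcal{J} = \mathcal{J}_{u,w}$. Any other $Q \in \mathcal{M}_{\mathcal{J}} \cap \Lm_{\geq 0}(2n)$ has matroid $\mathcal{J}_{u,w}$, so by Postnikov's result $Q$ belongs to $(\mathring{\Pi}^A_{u,w})_{\geq 0}$, and hence to $(\mathring{\Pi}^A_{u,w})_{\geq 0} \cap \Lm_{\geq 0}(2n) = (\mathring{\Pi}^C_{u,w})_{\geq 0}$ by the preceding proposition. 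Thus the matroid cell equals the totally nonnegative cell, which completes the argument. There is no real technical obstacle here; the only point requiring care is verifying that one can always choose the representative $(u,w)$ with $u,w \in S_n^C$, which is immediate from the fact that the positroid cells of $\Gr_{\geq 0}(n,2n)$ meeting $\Lm_{\geq 0}(2n)$ are exactly those indexed by the image of $\mathcal{Q}^C(2n) \hookrightarrow \mathcal{Q}(n,2n)$.
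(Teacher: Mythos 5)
Your proof is correct and follows the argument implicit in the paper, which presents the corollary as a direct consequence of the preceding proposition $\mathring{\Pi}^C_{\geq 0} = \mathring{\Pi}^C \cap \Gr_{\geq 0}(n,2n)$ together with Postnikov's identification of positroid cells with nonempty matroid cells in $\Gr_{\geq 0}(n,2n)$. Your two-step argument (matroid constant on each cell via inclusion into a positroid cell, then equality via the stratification and the identity $(\mathring{\Pi}^A_{u,w})_{\geq 0} \cap \Lm_{\geq 0}(2n) = (\mathring{\Pi}^C_{u,w})_{\geq 0}$, the latter following from the cited proposition and Proposition \ref{nicepos}) fills in exactly the bookkeeping the paper leaves to the reader.
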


\section{Indexing projected Richardson varieties in $\Lm(2n)$}

\label{index}

We now state a theorem which gives the type $C$ versions of the major combinatorial indexing sets for positroid varieties.

\begin{thm}
Each of the following are in bijection with projected Richardson varieties in $\Lm(2n)$.  In cases where the indexing set has a natural poset structure, the partial order corresponds to the \emph{reverse} of the closure order on projected Richardson varieties in $\Lm(2n)$.
\begin{enumerate}
\item \label{idle} Type $B$ \Le-diagrams which fit inside a staircase shape of size $n.$
\item \label{inq} The poset $\mathcal{Q}^C(2n)$.  
\item \label{inbd} The poset $\Bd^C(2n)$.
\item \label{ingraph} Equivalence classes of reduced symmetric plabic graphs, where the equivalence relation is given by symmetric moves.
\item \label{inmatroid} The poset of positroids $\mathcal{J}$ which satisfy
\[I \in \mathcal{J} \Leftrightarrow R(I) \in \mathcal{J}\]
ordered by \emph{reverse} containment.
\item \label{innecklace} The poset of Grassmann necklaces $\mathcal{I} = (I_1,\ldots,I_{2n})$ which satisfy $I_i = R(I_{i'+1})$, ordered by setting $\mathcal{I} \leq \mathcal{K}$ if $I_i \leq_i \mathcal{K}_i$ for all $i$.
\end{enumerate}
\end{thm}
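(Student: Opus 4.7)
The strategy is to reduce all six bijections to $\mathcal{Q}^C(2n)$, which by Rietsch's theorem (Section 2.3) canonically indexes projected Richardson varieties in $\Lm(2n)$, with its partial order matching the reverse of the closure order. The bijections (\ref{inq})$\leftrightarrow$(\ref{inbd}) and (\ref{inq})$\leftrightarrow$(\ref{ingraph}) are already established in earlier sections: the first by Definition 3.4 and the discussion following it, and the second by combining Lemma 5.2 (every reduced symmetric plabic graph has bounded affine permutation in $\Bd^C(2n)$, and every element of $\Bd^C(2n)$ arises this way) with Proposition 5.10 (equivalence under symmetric moves precisely captures equality of bounded affine permutations).

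For (\ref{idle})$\leftrightarrow$(\ref{inq}), I would apply Lemma 2.7 to see that each class in $\mathcal{Q}^C(2n)$ has a unique representative $[u,w]_n^C$ with $w$ minimal in $w(S_n^C)_n$. Such a $w$ corresponds to a unique type $B$ Young diagram $Y$ inside the staircase of size $n$ via the reduced word read off the boxes of $Y$, as reviewed in Section 2.  A type $B$ \Le-diagram of shape $Y$ is by definition an $\oplus$-diagram on $Y$ that encodes a PDS in $\mf{w}_Y$, which in turn determines a unique $u \leq w$. The resulting map to $\mathcal{Q}^C(2n)$ is a bijection, and neither set carries a natural poset structure.

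The remaining bijections (\ref{inbd})$\leftrightarrow$(\ref{inmatroid}) and (\ref{inbd})$\leftrightarrow$(\ref{innecklace}) I would establish by restricting the standard type $A$ bijections $\Bd(n,2n) \leftrightarrow \{\text{positroids}\} \leftrightarrow \{\text{Grassmann necklaces}\}$ reviewed in Section 2. The key computation is that the symmetry condition $f(2n+1-a) = 4n+1-f(a)$ cutting out $\Bd^C(2n)$ inside $\Bd(n,2n)$ translates exactly to the symmetry $I_i = R(I_{i'+1})$ on the necklace side, and to $I \in \mathcal{J} \Leftrightarrow R(I) \in \mathcal{J}$ on the positroid side. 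For necklaces this is a direct calculation from the explicit formula for $\mathcal{I}(f)$. For positroids, one can either go through the necklace, using Oh's characterization $\mathcal{J} = \{J : I_i \leq_i J \text{ for all } i\}$, or reason geometrically: the Pl\"ucker relations $\Delta_I = \Delta_{R(I)}$ derived in Section 5 show that a symmetric positroid forces the associated positroid variety to meet $\Lm(2n)$, so by Proposition 3.5 it equals some $\Pi^C_{u,w}$, forcing $f \in \Bd^C(2n)$. The order assertions for (\ref{inmatroid}) and (\ref{innecklace}) follow from the type $A$ statement that the necklace order matches reverse containment of positroids and both in turn match reverse inclusion of positroid varieties.

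The main obstacle I expect is the careful verification that the involution $R$ interacts correctly with each shifted linear order $\leq_i$---concretely, showing that $I \leq_i J$ if and only if $R(J) \leq_{i'+1} R(I)$, so that the symmetry of a necklace matches the condition $f \in \Bd^C(2n)$ and so that the bijection between symmetric necklaces and symmetric positroids is order-reversing in the correct way. This is a small but essential combinatorial check; once it is in hand, everything else is routine assembly from results already established in the paper.
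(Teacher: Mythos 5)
Your proposal is correct and follows essentially the same architecture as the paper's proof: assemble bijections (2)--(4) from results already established in Sections 3 and 5, treat (1) via the \Le-diagram/PDS correspondence, and reduce the order statement to a single poset by observing that the canonical isomorphisms among the type $A$ indexing posets restrict to the type $C$ subsets, with Proposition \ref{nicepos} supplying the claim for $\mathcal{Q}^C(2n)$. The only substantive divergence is in parts (5)--(6): you propose a direct combinatorial verification that the symmetry $f(a')=4n+1-f(a)$ on bounded affine permutations is equivalent to $I_i = R(I_{i'+1})$ on necklaces and to $I \in \mathcal{J} \Leftrightarrow R(I) \in \mathcal{J}$ on positroids, whereas the paper observes that the relevant matroids are exactly those of cells in $\Lm_{\geq 0}(2n)$ and outsources the characterization to Theorem 3.1 of [KS15]. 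The lemma you flag as the ``main obstacle''---that $R$ reverses the shifted linear orders compatibly, $I \leq_i J \Leftrightarrow R(J) \leq_{i'+1} R(I)$---is exactly the content being borrowed from that reference, so to make your route complete you would need to actually carry out that check rather than merely identify it; as written your proposal is sound but leaves that step as a gap that the paper closes by citation.
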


\begin{proof}
Part (\ref{idle}) was proved in \citep{LW07}.  We showed (\ref{inq}) and (\ref{inbd}) in Section \ref{bound}, while (\ref{ingraph}) follows from the discussion in Section \ref{Cbdry}. 
For (\ref{inmatroid}) note that the matroids of the totally nonnegative cells in $\Lm_{\geq 0}(2n)$ are precisely the positroids corresponding to symmetric plabic graphs.  The characterization of these positroids, and their corresponding Grassmann necklaces, follows from Theorem 3.1 of \citep{KS15}

For the statement about partial orders, note that each of the given posets embeds in the corresponding type $A$ poset which indexes positroid varieties.  In each case, the type $C$ poset indexes the open positroid varieties which intersect $\Lm(2n)$.  There are canonical isomorphisms among the type-$A$ posets, which preserve the correspondences with positroid varieties.  Hence it is enough to show the claim for any \emph{one} of the type $C$ posets.  Lemma \ref{nicepos} says precisely this for the poset $\mathcal{Q}^C(2n)$.
\end{proof}

\bibliographystyle{abbrvnat}
\bibliography{comboLG}

\begin{thebibliography}{26}
\providecommand{\natexlab}[1]{#1}
\providecommand{\url}[1]{\texttt{#1}}
\expandafter\ifx\csname urlstyle\endcsname\relax
  \providecommand{\doi}[1]{doi: #1}\else
  \providecommand{\doi}{doi: \begingroup \urlstyle{rm}\Url}\fi

\bibitem[Arkani-Hamed et~al.(2012)Arkani-Hamed, Bourjaily, Cachazo, Goncharov,
  Postnikov, and Trnka]{ABCGPT12}
N.~Arkani-Hamed, J.~L. Bourjaily, F.~Cachazo, A.~Goncharov, A.~Postnikov, and
  J.~Trnka.
\newblock Scattering amplitudes and the positive {G}rassmannian.
\newblock \emph{Preprint}, 2012.
\newblock arXiv:1212.5605v2 [hep-th].

\bibitem[Bergeron and Sottile(1998)]{BS98}
N.~Bergeron and F.~Sottile.
\newblock Schubert polynomials, the {B}ruhat order, and the geometry of flag
  manifolds.
\newblock \emph{Duke Math. J.}, 95:\penalty0 373--423, 1998.
\newblock arXiv:alg-geom/9703001.

\bibitem[Billey and Lakshmibai(2000)]{BL00}
S.~Billey and V.~Lakshmibai.
\newblock \emph{Singular loci of {S}chubert varieties}, volume 182 of
  \emph{Progress in Mathematics}.
\newblock Birkh\"auser Boston, Inc., Boston, MA, 2000.

\bibitem[Bjorner and Brenti(2005)]{BB05}
A.~Bjorner and F.~Brenti.
\newblock \emph{Combinatorics of {C}oxeter {G}roups}.
\newblock Springer Science+Business Media, Inc., New York, USA, 2005.

\bibitem[Brown et~al.(2006)Brown, Goodearl, and Yakimov]{BGY06}
K.~A. Brown, K.~R. Goodearl, and M.~Yakimov.
\newblock Poisson structures on affine spaces and flag varieties. {I}. {M}atrix
  affine {P}oisson space.
\newblock \emph{Advances in Mathematics}, 206\penalty0 (2):\penalty0 567--629,
  2006.
\newblock arXive:math/0509075v2 [math.QA].

\bibitem[Deodhar(1985)]{Deo85}
V.~V. Deodhar.
\newblock On some geometric aspects of {B}ruhat orderings. {I}. {A} finer
  decomposition of {B}ruhat cells.
\newblock \emph{Inventiones mathematicae}, 79:\penalty0 499--511, 1985.

\bibitem[He and Lam(2011)]{HL11}
X.~He and T.~Lam.
\newblock Projected {R}ichardson varieties and affine {S}chubert varities.
\newblock \emph{Preprint}, 2011.
\newblock arXiv:1106.2586 [math.AG].

\bibitem[Karpman(2016)]{Kar14}
R.~Karpman.
\newblock Bridge graphs and {D}eodhar parametrizations for positroid varieties.
\newblock \emph{Journal of Combinatorial Theory, Series A}, 142, 2016.
\newblock arXiv:math/1411.2997 [math.CO].

\bibitem[Karpman and Su(2015)]{KS15}
R.~Karpman and Y.~Su.
\newblock Combinatorics of symmetric plabic graphs.
\newblock \emph{Preprint}, 2015.
\newblock arXiv:1510.02122 [math.CO].

\bibitem[Knutson et~al.(2013)Knutson, Lam, and Speyer]{KLS13}
A.~Knutson, T.~Lam, and D.~Speyer.
\newblock Positroid varieties: juggling and geometry.
\newblock \emph{Compositio Mathematica}, 149\penalty0 (10):\penalty0
  1710--1752, 2013.
\newblock arXiv:0903.3694v1 [math.AG].

\bibitem[Knutson et~al.(2014)Knutson, Lam, and Speyer]{KLS14}
A.~Knutson, T.~Lam, and D.~Speyer.
\newblock Projections of {R}ichardson varieties.
\newblock \emph{J. Reine Angew. Math.}, 687:\penalty0 133--157, 2014.
\newblock arXiv:1008.3939v2 [math.AG].

\bibitem[Kottwitz and Rapoport(2000)]{KR99}
R.~Kottwitz and M.~Rapoport.
\newblock Miniscule alcoves for {$GL_{n}$} and {$GSp_{2n}$}.
\newblock \emph{Manuscripta Mathematica}, 102:\penalty0 403--428, 2000.

\bibitem[Lam(2013)]{Lam13}
T.~Lam.
\newblock Notes on the totally nonnegative {G}rassmannian.
\newblock Web, 2013.
\newblock Accessed: 09-26-2014.

\bibitem[Lam and Williams(2008)]{LW07}
T.~Lam and L.~Williams.
\newblock Total positivity for cominuscule {G}rassmannians.
\newblock \emph{New York Journal of Mathematics}, 14:\penalty0 53--99, 2008.
\newblock arXiv:math/0710.2932v1 [math.CO].

\bibitem[Lusztig(1994)]{Lus94}
G.~Lusztig.
\newblock Total positivity in reductive groups.
\newblock In \emph{Lie theory and geometry}, volume 123 of \emph{Progr. Math.},
  pages 531--568. Birkh\"auser Boston, Boston, MA, 1994.

\bibitem[Lusztig(1998)]{Lus98}
G.~Lusztig.
\newblock Total positivity in partial flag varieties.
\newblock \emph{Representation Theory}, 2:\penalty0 70--78, 1998.

\bibitem[Makisumi(2011)]{Ma11}
S.~Makisumi.
\newblock Structure theory of reductive groups through examples.
\newblock Web, 2011.
\newblock Accessed: 07-15-2015.

\bibitem[Marsh and Rietsch(2004)]{MR04}
R.~J. Marsh and K.~Rietsch.
\newblock Parametrizations of flag varieties.
\newblock \emph{Representation Theory}, 8:\penalty0 212--242 (electronic),
  2004.
\newblock arXiv:math/0307017.

\bibitem[Muller and Speyer(2016)]{MS14}
G.~Muller and D.~Speyer.
\newblock The twist for positroid varieties.
\newblock \emph{Preprint}, 2016.
\newblock arXiv:1606.08383 [math.CO].

\bibitem[Oh(2011)]{Oh11}
S.~Oh.
\newblock Positroids and {S}chubert matroids.
\newblock \emph{Journal of Combinatorial Theory, Series A}, 118\penalty0
  (8):\penalty0 2426--2435, November 2011.

\bibitem[Postnikov(2006)]{Pos06}
A.~Postnikov.
\newblock Total positivity, {G}rassmannians and networks.
\newblock \emph{Preprint}, 2006.
\newblock arXiv:math/0609764 [math.CO].

\bibitem[Postnikov et~al.(2009)Postnikov, Speyer, and Williams]{PSW09}
A.~Postnikov, D.~Speyer, and L.~Williams.
\newblock Matching polytopes, toric geometry, and the totally non-negative
  {G}rassmannian.
\newblock \emph{J. Algebraic Combin.}, 30\penalty0 (2):\penalty0 173--191,
  2009.
\newblock arXiv:math/0706.2501v3.

\bibitem[Rietsch(1999)]{Rie99}
K.~Rietsch.
\newblock An algebraic cell decomposition on the nonnegative part of a flag
  variety.
\newblock \emph{Journal of algebra}, 213:\penalty0 144--154, 1999.
\newblock arXiv:alg-geom/9709035.

\bibitem[Rietsch(2006)]{Rie06}
K.~Rietsch.
\newblock Closure relations for totally nonnegative cells in {G}/{P}.
\newblock \emph{Mathematical research letters}, 13:\penalty0 775--786, 2006.
\newblock arXiv:math/0509137v2 [math.AG].

\bibitem[Talaska and Williams(2013)]{TW13}
K.~Talaska and L.~Williams.
\newblock Network parametrizations for the {G}rassmannian.
\newblock \emph{Algebra and Number Theory}, 7\penalty0 (9):\penalty0
  2275--2311, December 2013.
\newblock arXiv:1210.5433v2 [math.CO].

\bibitem[Williams(2007)]{Wil07}
L.~Williams.
\newblock Shelling totally nonnegative flag varieties.
\newblock \emph{J. Reine Angew. Math.}, 609:\penalty0 1--21, 2007.
\newblock arXive:math/0509129v1 [Math.RT].

\end{thebibliography}

\end{document}